\pgfplotsset{width=10cm,compat=1.9}
\definecolor{blue(ryb)}{rgb}{0.01, 0.28, 1.0}
\definecolor{brandeisblue}{rgb}{0.0, 0.44, 1.0}
\definecolor{ceruleanblue}{rgb}{0.16, 0.32, 0.75}
\definecolor{cobalt}{rgb}{0.0, 0.28, 0.67}
\definecolor{coolblack}{rgb}{0.0, 0.18, 0.39}
\definecolor{darkblue}{rgb}{0.0, 0.0, 0.55}
\definecolor{darkblue}{rgb}{0.2,0.2,0.6}
\definecolor{DarkBlue}{rgb}{0,0.1,0.5}
\newcommand\soutD{\bgroup\markoverwith
	{\textcolor{DarkGreen}{\rule[.5ex]{2pt}{1pt}}}\ULon}
\newcommand\soutP{\bgroup\markoverwith
	{\textcolor{blue}{\rule[.5ex]{2pt}{1pt}}}\ULon}
\newcommand\nb{\nabla}
\newcommand{\beq}{\begin{equation} \begin{split}}
\newcommand{\eeq}{\end{split} \end{equation}}
\newcommand\Omg{\Omega}
\newcommand\Sg{\Sigma}
\renewcommand\and{\qquad\text{and}\qquad}
\newcommand\sm{\setminus}
\newcommand\sfm{\mathsf{m}}
\newcommand{\comm}[1]{}
\def\sfH{\mathsf{H}}
\def\bm1{\mathbbm{1}}
\def\G{\Gamma}
\def\C{\cC}
\def\s{\sigma}
\def\p{\partial}
\def\arr{\rightarrow}
\newcommand{\ee}{\mathsf{e}}
\newcommand{\zz}{\mathsf{z}}
\renewcommand{\gg}{{\gamma}}
\def\aa{\alpha}
\def\lm{\lambda}
\def\s{\sigma}
\def\ii{{\mathsf{i}}}
\def\p{\partial}
\def\kp{\kappa}
\def\sfH{\mathsf{H}}
\def\Z{\mathsf{z}}
\def\dd{{\,\mathrm{d}}}
\newcounter{counter_a}
\newenvironment{myenum}{\begin{list}{{\rm(\roman{counter_a})}}%
{\usecounter{counter_a}
\setlength{\itemsep}{1.ex}\setlength{\topsep}{0.8ex}
\setlength{\leftmargin}{5ex}
\setlength{\labelwidth}{5ex}}}{\end{list}}
\numberwithin{figure}{section}
\numberwithin{equation}{section}
\theoremstyle{plain}
\newtheorem*{thm*}{Theorem}
\newtheorem{thm}{Theorem}[section]
\newtheorem{hyp}[thm]{Hypothesis}
\newtheorem{prop}[thm]{Proposition}
\theoremstyle{remark}
\newtheorem{remark}[thm]{Remark}
\theoremstyle{plain}
\newcommand{\beu}{\begin{equation*}}
\newcommand{\eeu}{\end{equation*}}
\newcommand{\besu}{\begin{equation*}
\begin{aligned}}
\newcommand{\eesu}{\end{aligned}
\end{equation*}}
\newcommand{\bes}{\begin{equation}
\begin{aligned}}
\newcommand{\ees}{\end{aligned}
\end{equation}}
\newcommand\cA{\mathcal A}
\newcommand\cB{\mathcal B}
\newcommand\cF{\mathcal F}
\newcommand\cH{\mathcal H}
\newcommand\cK{\mathcal K}
\newcommand\cL{\mathcal L}
\newcommand\frq{\mathfrak q}
\newcommand\frp{\mathfrak p}
\newcommand\ov{\overline}
\newcommand\wt{\widetilde}
\newcommand\void[1]{}
\def\ov{\overline}
      \def\dC{{\mathbb C}}
   \def\dN{{\mathbb N}}   
      \def\dR{{\mathbb R}}
   \def\dZ{{\mathbb Z}}
   \def\sfH{{\mathsf H}}
\def\cA{{\mathcal A}}   \def\cB{{\mathcal B}}   
      \def\cF{{\mathcal F}}
   \def\cH{{\mathcal H}}   \def\cI{{\mathcal I}}
   \def\cK{{\mathcal K}}   \def\cL{{\mathcal L}}
      \def\cO{{\mathcal O}}
\def\N{\mathbb{N}}
\newcommand\Th{\Theta}
\newcommand{\Hm}[1]{\leavevmode{\marginpar{\tiny%
			$\hbox to 0mm{\hspace*{-0.5mm}$\leftarrow$\hss}%
			\vcenter{\vrule depth 0.1mm height 0.1mm width \the\marginparwidth}%
			\hbox to
			0mm{\hss$\rightarrow$\hspace*{-0.5mm}}$\\
			\relax\raggedright #1}}}
\def\nz{\ifmmode {I\hskip -3pt N} \else {\hbox {$I\hskip -3pt N$}}\fi}
\def\zz{\ifmmode {Z\hskip -4.8pt Z} \else

       {\hbox {$Z\hskip -4.8pt Z$}}\fi}
\def\qz{\ifmmode {Q\hskip -5.0pt\vrule height6.0pt depth 0pt
       \hskip 6pt} \else {\hbox
       {$Q\hskip -5.0pt\vrule height6.0pt depth 0pt\hskip 6pt$}}\fi}
\def\rz{\ifmmode {I\hskip -3pt R} \else {\hbox {$I\hskip -3pt R$}}\fi } 
\def\cz{\ifmmode {C\hskip -4.8pt\vrule height5.8pt \hskip 6.3pt} \else 
{\hbox {$C\hskip -4.8pt\vrule height5.8pt \hskip 6.3pt$}}\fi} 
\def\curl{{\rm curl}\,}
\def \Ab{{\bf A}}
\def\and {{\rm \; and \;}}
\def\dist {{\rm \; dist \;}}
\def\nb{\mathbf n}
\def\R{\mathbb R}
\def\N{\mathbb N}
\def\Z{\mathbb Z}
\def\C{\mathbb C}
\def\cF{\mathcal F}
\definecolor{ao(english)}{rgb}{0.0, 0.5, 0.0}
\newcommand{\wg}{\widetilde\gamma}
\newcommand{\wG}{\widetilde\Gamma}
\renewcommand*{\overrightarrow}[1]{\vbox{\halign{##\cr 
  \tiny\rightarrowfill\cr\noalign{\nointerlineskip\vskip1pt} 
  $#1\mskip2mu$\cr}}}
\newtheorem{theorem}{Theorem}[section]
\newtheorem{conjecture}[theorem]{Conjecture}
\newtheorem{lemma}[theorem]{Lemma}
\newtheorem{proposition}[theorem]{Proposition}
\newtheorem{definition}[theorem]{Definition}
\newtheorem{corollary}[theorem]{Corollary}
\newcommand{\dx}{\,\mathrm{d}}
\newcommand{\dom}{\mathrm{dom}\,}
\def\p{\partial}
\def\Omg{\Omega}
\def\dC{{\mathbb C}}
\def\sfH{{\mathsf H}}
\def\ii{{\mathrm{i}}}
\title[Isoperimetric inequalities for inner parallel curves]{Isoperimetric inequalities for inner parallel curves}
\author[C. Dietze]{Charlotte Dietze}
\address{Department of Mathematics, LMU Munich, Theresienstr.~39, 80333 Munich, Germany}
\email{dietze@math.lmu.de}
\author[A. Kachmar]{Ayman Kachmar}
\address{The Chinese University of Hong Kong (Shenzhen), School of Science and Engineering, Longgang District, Shenzhen, China.}
\email{akachmar@cuhk.edu.cn}
\author[V. Lotoreichik]{Vladimir Lotoreichik}
\address{Department of Theoretical Physics, Nuclear Physics Institute, Czech Academy of Sciences, 25068 
\v{R}e\v{z}, Czech Republic}
\email{lotoreichik@ujf.cas.cz}
\subjclass{Primary:
49Q10. Secondary: 51M15, 28A75, 35P15, 58J50}
\keywords{Inner parallel curves, isoperimetric inequalities, moments of inertia, spectral inequalities.}
\begin{document}
\bibliographystyle{plain}
\maketitle 

%
\begin{abstract}
We prove weighted isoperimetric inequalities for smooth, bounded, and simply connected domains. More precisely, we show that the moment of inertia of inner parallel curves for domains with fixed perimeter attains its maximum for a disk. This inequality, which was previously only known for convex domains, allows us to extend an isoperimetric inequality for the magnetic Robin Laplacian to non-convex centrally symmetric domains. 
Furthermore, 
we extend our isoperimetric inequality for moments of inertia, which are second moments, to $p$-th moments for all $p$ smaller than or equal to two. We also show that the disk is a strict local maximiser in the nearly circular, centrally symmetric case for all $p$ strictly less than three, and that the inequality fails for all $p$ strictly bigger than three. 
\end{abstract}

\tableofcontents

\section{Introduction}\label{sec:int}
Let $\Omega \subset \mathbb{R}^{2}$ be a smooth, bounded and simply connected domain. For any $t\ge0$, we define the corresponding inner parallel curve $S_{t}$ by
\begin{align}\label{eq:stdef}
S_{t}:=S_t(\Omg)=\{x \in \overline{\Omega} \mid \operatorname{dist}(x, \partial \Omega)=t\} \text {. }
\end{align}
The systematic study of the geometric structure and regularity of inner parallel curves was initiated in~\cite{B41, F41, H64}, see also~\cite{SST,S01} and references therein.

By~\cite[Theorem 4.4.1]{SST} and~\cite[Proposition A.1]{S01}, the inner parallel curve $S_{t}$ is a finite union of piecewise smooth simple curves for almost every $t\ge0$. Hartman~\cite[Corollary 6.1]{H64} showed that
\begin{align}\label{eq:sthartest}
\left|S_{t}\right| \leq|\partial \Omega|-2 \pi t \quad \text { for almost every } t\ge0 \text { with $S_t \ne \varnothing$, }
\end{align}
where $\left|S_{t}\right|$ and $|\partial \Omega|$ denote the length of $S_{t}$ and $\partial \Omega$, respectively.

The moment of inertia of the inner parallel curve $S_{t}$ computed with respect to its centroid 
\begin{equation}\label{eq:defcentS_t}
    c(t):=\frac{1}{|S_t|}\int_{S_{t}} x \dd \mathcal{H}^{1}(x)
\end{equation}
is given by
\begin{align}\label{eq:momindef}
\int_{S_{t}}|x-c(t)|^{2} \dd \mathcal{H}^{1}(x)\,,
\end{align}
where $\mathcal{H}^{1}$ denotes the one-dimensional Hausdorff measure. 
In this paper, we address the following question: 
\begin{center}
\emph{Fixing the perimeter of $\Omega$, what shape of $\Omega$ maximizes the moment of inertia of the inner parallel curve $S_{t}$ as defined in \eqref{eq:momindef} for given $t\ge0$? }
\end{center}
Our main result states that the optimal shape is attained for a disk.

\begin{theorem}[An isoperimetric inequality for moments of inertia]\label{th:isomom}
Let $\Omega \subset \mathbb{R}^{2}$ be a smooth, bounded and simply connected domain. Then, for almost every $t\ge0$, 
\begin{align}\label{eq:isomom}
\int_{S_{t}(\Omg)}|x-c(t)|^{2} \dd\mathcal{H}^{1}(x) \leq \int_{S_{t}(\cB)}|x|^{2} 
\dd\mathcal{H}^{1}(x),
\end{align}
where 
$\cB$ is the disk centered at the origin and with the same perimeter as $\Omg$.
Here $S_{t}(\cdot)$ and $c(t)$ are defined in \eqref{eq:stdef} and \eqref{eq:defcentS_t}, respectively. When $t\in\left[0,\frac{|\partial\Omega|}{2\pi}\right)$, the equality in \eqref{eq:isomom} is attained if and only if $\Omega$ is a disk.
\end{theorem}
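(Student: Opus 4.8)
The plan is to reduce the two-dimensional isoperimetric statement to a one-variable inequality for the length function $\ell(t):=|S_t(\Omega)|$ via the coarea formula, and then exploit the sharp pointwise bound \eqref{eq:sthartest}. First I would fix a generic $t\ge 0$ for which $S_t$ is a finite union of piecewise smooth simple curves (Theorem 4.4.1 of \cite{SST}), and observe that among all closed rectifiable sets of prescribed total length $L$, the moment of inertia about the centroid, $\int |x-c|^2\,\mathrm d\mathcal H^1$, is maximized by the round circle of circumference $L$. Indeed, for a single simple closed curve this is the classical fact that a circle maximizes $\int |x-c|^2$ for fixed length (it follows from the Fourier-series/Wirtinger computation applied to an arclength parametrization, since $\int|x-c|^2 = \sum_{k}(1+k^{-2})|\hat x_k|^2$-type expressions are dominated when only the first harmonic survives), and for a disjoint union one checks that merging components into a single circle only increases the functional. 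Combined with \eqref{eq:sthartest}, which gives $\ell(t)\le |\partial\Omega|-2\pi t$, and with the monotonicity of the circle functional $L\mapsto (L/2\pi)^2\cdot L/(2\pi)\cdot 2\pi = L^3/(4\pi^2)$ in $L$, this yields
\begin{align}\label{eq:pf-chain}
\int_{S_t(\Omega)}|x-c(t)|^2\,\mathrm d\mathcal H^1(x)\ \le\ \frac{\ell(t)^3}{4\pi^2}\ \le\ \frac{(|\partial\Omega|-2\pi t)^3}{4\pi^2}\ =\ \int_{S_t(\mathcal B)}|x|^2\,\mathrm d\mathcal H^1(x),
\end{align}
the last identity because for the disk $\mathcal B$ of perimeter $|\partial\Omega|$ the curve $S_t(\mathcal B)$ is a concentric circle of radius $\frac{|\partial\Omega|}{2\pi}-t$ and length $|\partial\Omega|-2\pi t$.

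For the equality case when $t\in\big[0,\tfrac{|\partial\Omega|}{2\pi}\big)$, I would trace back the two inequalities in \eqref{eq:pf-chain}. Equality in the first forces $S_t$ to be a single round circle for a.e.\ such $t$; equality in the second forces $\ell(t)=|\partial\Omega|-2\pi t$ for a.e.\ $t$ in that range, i.e.\ Hartman's bound \eqref{eq:sthartest} is saturated on a set of full measure. I would then argue that saturation of \eqref{eq:sthartest} on a full-measure set of $t$, together with $\ell(0)=|\partial\Omega|$, propagates back to $t=0$ and forces $\partial\Omega=S_0$ itself to be a circle: one way is to use that $t\mapsto \ell(t)$, being (up to the known a.e.\ structure) essentially the derivative data of the area function $A(t)=|\{x:\operatorname{dist}(x,\partial\Omega)>t\}|$ via the coarea formula $A(0)=\int_0^\infty \ell(s)\,\mathrm ds$, so $|\Omega|=\int_0^{|\partial\Omega|/2\pi}(|\partial\Omega|-2\pi s)\,\mathrm ds = \frac{|\partial\Omega|^2}{4\pi}$, which is precisely the equality case of the classical isoperimetric inequality and hence forces $\Omega$ to be a disk. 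Conversely, if $\Omega$ is a disk then both sides of \eqref{eq:isomom} obviously coincide.

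The main obstacle is the first step: proving that a round circle maximizes $\int|x-c|^2\,\mathrm d\mathcal H^1$ among \emph{finite unions of piecewise smooth simple closed curves} of fixed total length, and handling the centroid correctly in the disconnected case. For a single curve the Wirtinger-type argument is standard, but when $S_t$ has several components one must show that the best configuration concentrates all the length in one circle; I expect this to follow from a convexity/rearrangement argument—replacing a union of circles of lengths $L_1,\dots,L_m$ (each contributing $L_i^3/4\pi^2$ about its own centroid, plus nonnegative parallel-axis terms) by one circle of length $\sum L_i$ strictly increases $\sum L_i^3/4\pi^2$ unless $m=1$—combined with the single-curve bound to first replace each component by a circle. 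A secondary technical point is the measure-theoretic bookkeeping needed to pass from "a.e.\ $t$" statements about $S_t$ and $\ell(t)$ to the conclusion about $\Omega$ in the equality case; this is where invoking the coarea formula for $\operatorname{dist}(\cdot,\partial\Omega)$ and the classical isoperimetric inequality gives the cleanest route, sidestepping any delicate regularity analysis of the cut locus.
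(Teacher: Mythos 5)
Your reduction has a genuine gap at the first inequality of your chain: the claim that a single circle maximizes $\int_\Gamma |x-c|^2\,\mathrm{d}\mathcal{H}^1(x)$ among finite unions of closed curves of fixed \emph{total length} is false, and the merging argument you sketch does not repair it. By the parallel-axis identity, a union of circles of lengths $L_1,\dots,L_m$ with centers $c_1,\dots,c_m$ has moment of inertia about the common centroid equal to $\sum_i L_i^3/(4\pi^2)+\sum_i L_i|c_i-c|^2$; the second sum sits on the wrong side of your inequality, is not controlled by the total length, and can be made arbitrarily large by moving the components apart. Concretely, for two circles of lengths $L_1,L_2$ at mutual distance $d$ the union beats the single circle of length $L_1+L_2$ as soon as $d>\sqrt{3}\,(L_1+L_2)/(2\pi)$. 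This situation genuinely occurs for inner parallel curves: in a long thin dumbbell (two bulbs of radius $r$ joined by a neck of length $D\gg r$ and half-width less than $t$), $S_t$ consists of two components of total length about $4\pi(r-t)$ separated by distance about $D$, so $\int_{S_t}|x-c(t)|^2\,\mathrm{d}\mathcal{H}^1(x)\approx \pi(r-t)D^2$ vastly exceeds $|S_t|^3/(4\pi^2)$. Hence the intermediate bound $\int_{S_t}|x-c(t)|^2\,\mathrm{d}\mathcal{H}^1(x)\le |S_t|^3/(4\pi^2)$ is simply wrong, and Hartman's estimate \eqref{eq:sthartest} alone cannot close the argument: one must use the fact that the separation between the components of $S_t$ is itself paid for by the deficit $|\partial\Omega|-2\pi t-|S_t|$.

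That is exactly what the paper's proof supplies and what your proposal is missing. Theorem~\ref{th:curveconstr} constructs a single \emph{closed} piecewise smooth curve $\Sigma_t\supset S_t$ by joining consecutive arcs of $S_t$ with line segments, and the nontrivial point --- Proposition~\ref{prop:geometric_bound}, a lower bound for the length of the boundary arc of $\Omega$ running between two osculating disks of radius $t$ --- shows that the lengths of these segments are absorbed into the total-curvature budget, so that still $|\Sigma_t|\le|\partial\Omega|-2\pi t$ (this also yields the refined Hartman bound of Corollary~\ref{co:hartimp}). Applying the Wirtinger/Hurwitz step to the single closed curve $\Sigma_t$, rather than to $S_t$ itself, then gives the theorem. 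Your remaining inequalities and the single-curve Wirtinger computation are fine, and your equality analysis is broadly workable, but note that the theorem asserts rigidity for each fixed $t\in[0,|\partial\Omega|/(2\pi))$, not only after integrating over a full-measure set of $t$; the paper obtains this directly from the equality case of Wirtinger applied to $\Sigma_t$, which forces $S_t=\Sigma_t$ to be a circle of radius $|\partial\Omega|/(2\pi)-t$, hence $\overline\Omega$ to contain a disk of perimeter $|\partial\Omega|$, and then concludes by the classical isoperimetric inequality.
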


Note that $c(t)=0$ if $\Omega$ is a disk centred at the origin. In the setting where $S_{t}$ is a closed curve, the statement of Theorem \ref{th:isomom} can be deduced from a result due to Hurwitz~\cite[pp.~396-397]{Hu} combined with \eqref{eq:sthartest}. For instance, this argument applies for convex domains $\Omega$, see~\cite[p.~12]{KL} for further details. In general, $S_{t}$ can consist of several connected components, see for example Figure~\ref{fig-dumbbell},  and Theorem \ref{th:isomom} is novel in this case.
The classical result by Hurwitz itself provides an isoperimetric inequality for the moment of inertia of the boundary of a planar domain under fixed perimeter constraint and essentially coincides with the statement of Theorem~\ref{th:isomom} in the special case $t=0$. The recent contribution~\cite{LS23} proves
a quantitative version of the inequality by Hurwitz and addresses the higher dimensional setting.   

Our proof of Theorem~\ref{th:isomom} relies on an explicit construction of a \emph{closed} curve $\Sigma_{t}$.
An illustration for $\Sigma_{t}$ is shown in Figure~\ref{fig-dumbbell2} (where $\Sigma_t$ inherits the symmetry of $\Omega$).
\begin{figure}[htp]
  \centering
  \begin{minipage}[t]{0.49\textwidth}
    \includegraphics[width=\textwidth]{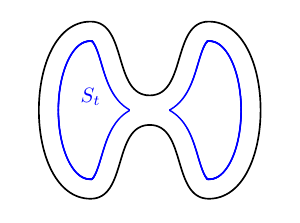}
    \caption{A schematic representation of the inner parallel curve $S_t$ in a dumbbell-like domain. Note that $S_t$ can be disconnected.}
    \label{fig-dumbbell}
  \end{minipage}
  \hfill
  \begin{minipage}[t]{0.49\textwidth}
    \includegraphics[width=\textwidth]{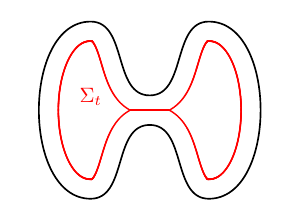}
    \caption{A schematic illustration of the connected curve $\Sigma_t$ in the case of a dumbbell-like domain. The proof of Theorem~\ref{th:curveconstr} will show that the segment connecting the two connected components of $S_t$ is doubly covered.}
\label{fig-dumbbell2}
  \end{minipage}
\end{figure}
\begin{theorem}[Covering inner parallel curves with a closed curve]\label{th:curveconstr}
Let $\Omega \subset \mathbb{R}^{2}$ be a smooth, bounded and simply connected domain. Then, for almost every $t\ge0$ with $S_{t} \neq \varnothing$ there exists a closed and piecewise smooth curve $\Sigma_{t}$ with
\begin{align}
S_{t} \subset \Sigma_{t} \quad \text { and } \quad\left|\Sigma_{t}\right| \leq|\partial \Omega|-2 \pi t,
\end{align}
where $S_{t}$ was defined in \eqref{eq:stdef}.
\end{theorem}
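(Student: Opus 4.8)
The plan is to realise $\Sigma_t$ as the image of $\partial\Omega$ under a single explicit map that interpolates between translation along the inward normal by $t$ (which recovers $S_t$) and a ``capped'' translation that runs along the cut locus (which supplies the bridges joining the components of $S_t$). Write $\gamma$ for an arclength parametrisation of $\partial\Omega$, $\n$ for the inward unit normal, $\kappa$ for the curvature normalised so that $\kappa>0$ where $\partial\Omega$ curves towards $\Omega$, and $\Phi_s(\theta):=\gamma(\theta)+s\,\n(\theta)$. Let $\rho(\theta)\in(0,\infty)$ be the distance from $\gamma(\theta)$ to the cut locus of $\Omega$ along the inward normal, so that $\Phi_s(\theta)$ realises $\operatorname{dist}(\Phi_s(\theta),\partial\Omega)=s$ for all $s\le\rho(\theta)$. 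Two elementary facts drive the argument. First, every $x\in S_t$ equals $\Phi_t(\theta)$ for a nearest boundary point $\gamma(\theta)$, and such $\theta$ satisfies $\rho(\theta)\ge t$, so $S_t=\Phi_t(\{\rho\ge t\})$. Second, on $\{\rho\ge t\}$ one has $t\le\rho(\theta)\le 1/\kappa(\theta)$ whenever $\kappa(\theta)>0$ (the cut point precedes the focal point), hence $1-t\kappa\ge 0$ there.

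I would work at a value $t$ for which the structural results of~\cite{SST,S01} apply and for which Hartman's bound~\eqref{eq:sthartest} holds, so that $S_t$ is a finite union of piecewise smooth simple curves and $A_t:=\{\theta:\rho(\theta)\ge t\}$ is a finite union of closed arcs on which $\Phi_t$ is piecewise $C^1$ with images subarcs of $S_t$ meeting only at finitely many cut points; such $t$ form a set of full measure. Then I would \emph{define}
\[
\Sigma_t:=\bigl\{\,\gamma(\theta)+\min\{t,\rho(\theta)\}\,\n(\theta)\ :\ \theta\in\partial\Omega\,\bigr\}.
\]
This is the continuous image of the circle $\partial\Omega$, it is piecewise smooth, over $A_t$ it coincides with $S_t$ (so $S_t\subset\Sigma_t$), and over each complementary arc $J\subset\partial\Omega\setminus A_t$ (where $\rho<t$) it traverses the cut-locus arc $\theta\mapsto\Phi_{\rho(\theta)}(\theta)$. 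Since a generic cut point has exactly two nearest boundary points, each such cut-locus arc is swept out by exactly two complementary arcs with opposite orientations; this is precisely the doubly covered segment indicated in Figure~\ref{fig-dumbbell2}, and it is what joins the several simple curves of $S_t$ into the single closed curve $\Sigma_t$.

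It remains to bound $|\Sigma_t|$. From $\tfrac{d}{d\theta}\Phi_s(\theta)=(1-s\kappa(\theta))\,\gamma'(\theta)$ and $\int_{\partial\Omega}\kappa=2\pi$ one gets $\int_{\partial\Omega}(1-t\kappa(\theta))\,d\theta=|\partial\Omega|-2\pi t$, while the part of $\Sigma_t$ lying over $A_t$ has length $\int_{A_t}(1-t\kappa(\theta))\,d\theta$ (using $1-t\kappa\ge0$ there), which equals $|S_t|$ for a.e.\ $t$, and the bridge part contributes $2\mathcal L_t$, where $\mathcal L_t$ is the length of the portion of the cut locus lying within distance $t$ of $\partial\Omega$. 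Hence $|\Sigma_t|=\int_{A_t}(1-t\kappa)\,d\theta+2\mathcal L_t$, and $|\Sigma_t|\le|\partial\Omega|-2\pi t$ is equivalent to
\[
2\mathcal L_t\ \le\ \int_{\partial\Omega\setminus A_t}\bigl(1-t\kappa(\theta)\bigr)\,d\theta .
\]
I would prove this by grouping the complementary arcs into the twin pairs $\{J,J'\}$ that sweep out the same cut-locus arc $\beta$: for such a pair one must show $2|\beta|\le\int_{J}(1-t\kappa)+\int_{J'}(1-t\kappa)$. Writing $|\beta|$ as a line integral over $J$, respectively $J'$, and using that $\beta$ is the centre line of the channel of $\Omega$ enclosed between its two sides $J$ and $J'$, a chord/arc inequality for $\Phi_{(\cdot)}$ together with the fact that the turning of a convex stretch on one side is cancelled by a concave stretch on the other yields the bound; summing over twin pairs gives the displayed inequality and hence the theorem.

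The main obstacle is exactly the displayed inequality $2\mathcal L_t\le\int_{\partial\Omega\setminus A_t}(1-t\kappa)\,d\theta$. The integrand $1-t\kappa$ is \emph{not} nonnegative on all of $\partial\Omega\setminus A_t$: sharp convex features have $\kappa>1/t$, so $1-t\kappa<0$ there, which means one cannot argue arc by arc, and a naive chord estimate for an individual bridge can fail. The resolution must use the twin pairing — so that the excess turning of a convex stretch is compensated — together with a precise description, afforded by~\cite{SST,S01}, of how the cut locus sits relative to the two sides of each channel; equivalently, one needs a refinement of Hartman's inequality~\eqref{eq:sthartest} keeping track of the length of the cut locus. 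A clean way to organise this is to first treat domains with $\kappa<1/t$ everywhere on $\partial\Omega$, where every complementary arc is curvature–subcritical and the per-channel estimate is elementary, and then reduce the general case to it. The remaining points — finiteness of the arc decomposition, the behaviour at branch points of the cut locus and at focal points, essential injectivity of $\Phi_t$ on $A_t$, and the discarded null set of $t$ — are routine given the cited structure theory.
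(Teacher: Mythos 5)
Your construction differs from the paper's in one essential respect: you route the bridges between the pieces of $S_t$ along the cut locus, by setting $\Sigma_t=\{\gamma(\theta)+\min\{t,\rho(\theta)\}\,\mathbf{n}(\theta)\}$, whereas the paper connects the terminal point of each piece $\gamma(s)+t\mathbf{n}(s)$, $s\in[a_k,b_k]$, to the starting point of the next piece by a \emph{straight segment} $\cI_k$, and controls $|\cI_k|$ by the geometric inequality $|\Gamma|\ge |c_1-c_2|+t\int\kappa$ of Proposition~\ref{prop:geometric_bound} (proved via winding numbers in Appendix~\ref{app}). You correctly identify that everything hinges on the inequality $2\mathcal L_t\le\int_{\partial\Omega\setminus A_t}(1-t\kappa)\,d\theta$, but you do not prove it, and in fact it is \emph{false}: since $\int_{\partial\Omega\setminus A_t}(1-t\kappa)=L-2\pi t-|S_t|$, your inequality is exactly the assertion $|\Sigma_t|\le L-2\pi t$ for your curve, and this fails whenever the cut locus has long branches running toward high-curvature convex parts of $\partial\Omega$. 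Such branches reach all the way to the focal points, at distance only $1/\kappa_{\max}$ from the boundary, so they lie almost entirely in $\{\mathrm{dist}<t\}$ and are doubly covered by your map, even though they do not separate any components of $S_t$ and contribute nothing to covering $S_t$.

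Concretely, take $\Omega$ the ellipse $x^2/a^2+y^2=1$ with $a=100$, and $t=0.99$ (just below the inradius $1$). Here $S_t$ is a single small lens around the origin of length $|S_t|\approx 56$, the cut locus is the segment $|x|\le a-1/a$ of the major axis, and the portion of it at distance $<t$ from $\partial\Omega$ has length $\mathcal L_t\approx 172$; your curve therefore has length $|S_t|+2\mathcal L_t\approx 400$, while $L-2\pi t\approx 394$. The per-channel version fails for the same reason: at a sharp convex tip the complementary arc $J$ is its own ``twin'', there is no opposite concave stretch to cancel its turning, and the deficit is precisely $t\int_J\kappa\approx \pi t$. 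The paper's construction is immune to this because in such a situation consecutive pieces of $S_t$ share their endpoint (both map to the same cut point at distance exactly $t$), so the connecting segment $\cI_k$ degenerates to a point and costs nothing, while Proposition~\ref{prop:geometric_bound} shows that the turning $t\int_{b_k}^{a_{k+1}}\kappa$ accumulated over the intervening boundary arc is always affordable within the budget $a_{k+1}-b_k$ once only the \emph{chord} $|\frq_k-\frp_{k+1}|$, rather than a cut-locus arc, must be paid for. To salvage your approach you would have to prune the non-separating branches of the cut locus and replace the remaining cut-locus bridges by geodesic chords --- at which point you have essentially reconstructed the paper's proof, including its hardest ingredient, Proposition~\ref{prop:geometric_bound}.
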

The technical result Theorem~\ref{th:curveconstr} is of independent interest as we obtain an improved version of \eqref{eq:sthartest} taking the distance between different connected components of $S_{t}$ into account, see Corollary \ref{co:hartimp} below.

\bigskip

More generally, we can consider $p$-th moments and ask for which $p \in(0, \infty)$ we have

\begin{align}\label{eq:stp}
\int_{S_{t}(\Omega)}|x-c(t)|^{p} \dd \mathcal{H}^{1}(x) \leq \int_{S_{t}(\cB)}|x|^{p} \dd \mathcal{H}^{1}(x) \quad \text { for almost every } t \geq 0 \text {, }
\end{align}
where $c(t)$ is the centroid of $S_{t}(\Omega)$ and $\cB$ is a disk centred at the origin with $|\partial \Omega|=|\partial \cB|$. For $t\ge0$ small enough, we have $|S_{t}(\Omega)|=|S_{t}(\cB)|$ and $S_{t}(\Omega)$ is a closed curve, see Lemma~\ref{lem:alphat}\,(ii) below, so \eqref{eq:stp} reduces to 
\begin{equation}
    \frac{(2 \pi)^{p}}{\left|{S_{t}(\Omega)}\right|^{p+1}} \int_{S_{t}(\Omega)}|x-c(t)|^{p} \dd \mathcal{H}^{1}(x) \leq 1.
\end{equation}
So we may ask if we have 
\begin{align}\label{eq:gammacponeintro}
\frac{(2 \pi)^{p}}{\left|\Gamma\right|^{p+1}} \int_{\Gamma}|x|^{p} \dd \mathcal{H}^{1}(x) \leq 1
\end{align}
for all closed Lipschitz curves $\Gamma$ with the origin as its centroid. 

\medskip

Note that the centroid $c(t)$ is independent of $t$ for all centrally symmetric domains $\Omega$, or for example for domains with two not necessarily orthogonal axes of symmetry. To keep things simple, we focus on the centrally symmetric case. 

\begin{theorem}[An isoperimetric inequality for $p$-th moments]\label{th:pthmoments}
    \begin{enumerate}
        \item[(i)] The statement of Theorem \ref{th:isomom} extends to \eqref{eq:stp} in the case $p\in(0,2]$.
        \item[(ii)] For $p<3$, the boundary of a disk is a strict local maximiser among nearly circular, centrally symmetric  closed Lipschitz curves $\Gamma$ of the left hand side in the inequality \eqref{eq:gammacponeintro}.
\item[(iii)] For $p>3$, \eqref{eq:gammacponeintro} does not hold, not even locally near boundary of the disk. More precisely, there exists a sequence of nearly circular, centrally symmetric closed Lipschitz curves 
$\left(\Gamma_n\right)_{n\in\N}$ converging uniformly to the boundary of the disk for which
\begin{align*}
\frac{(2 \pi)^{p}}{\left|\Gamma_{n}\right|^{p+1}} \int_{\Gamma_{n}}|x|^{p} \dd \mathcal{H}^{1}(x)>1.
\end{align*}
    \end{enumerate}
\end{theorem}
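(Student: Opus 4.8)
The plan is to treat the three parts in turn.

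\textbf{Part (i).} I would deduce this from the already-proven case $p=2$ (Theorem~\ref{th:isomom}) by an interpolation argument; the argument is symmetry-free, so it applies in the generality of Theorem~\ref{th:isomom}. Write $L:=|\partial\Omega|-2\pi t$ and $R:=|\partial\Omega|/(2\pi)$. For $t<R$ the curve $S_t(\cB)$ is a circle of radius $R-t=L/(2\pi)$, so $\int_{S_t(\cB)}|x|^q\dd\mathcal H^1=L^{q+1}/(2\pi)^q$ for every $q>0$; for $t\ge R$ both sides of \eqref{eq:stp} vanish by \eqref{eq:sthartest}. Assume $t<R$ and $|S_t(\Omega)|>0$. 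Since $s\mapsto s^{p/2}$ is concave on $[0,\infty)$ for $p\in(0,2]$, Jensen's inequality with $\mathcal H^1$ normalised to a probability measure on $S_t(\Omega)$, followed by Theorem~\ref{th:isomom} with exponent $2$ and then by $|S_t(\Omega)|\le L$ from \eqref{eq:sthartest} (using $1-p/2\ge0$), gives
\begin{align*}
\int_{S_t(\Omega)}|x-c(t)|^p\dd\mathcal H^1
&\le|S_t(\Omega)|^{1-\frac p2}\Bigl(\int_{S_t(\Omega)}|x-c(t)|^2\dd\mathcal H^1\Bigr)^{\frac p2}\\
&\le L^{1-\frac p2}\Bigl(\tfrac{L^3}{4\pi^2}\Bigr)^{\frac p2}=\frac{L^{p+1}}{(2\pi)^p}=\int_{S_t(\cB)}|x|^p\dd\mathcal H^1,
\end{align*}
which is \eqref{eq:stp}; tracking the equalities (Jensen is an equality only if $|x-c(t)|$ is a.e.\ constant on $S_t(\Omega)$) recovers the rigidity statement of Theorem~\ref{th:isomom}.

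\textbf{Set-up for (ii) and (iii).} I would parametrise a nearly circular curve as a polar graph $\Gamma=\{(1+v(\theta))(\cos\theta,\sin\theta):\theta\in\R/2\pi\Z\}$ with $\|v\|_{W^{1,\infty}}$ small; central symmetry is equivalent to $v(\theta+\pi)=v(\theta)$, so $v$ has only even Fourier modes — in particular the first mode vanishes, which also places the centroid at the origin. Since $F_p(\Gamma):=(2\pi)^p|\Gamma|^{-p-1}\int_\Gamma|x|^p\dd\mathcal H^1$ is scale invariant, we may take $\int_0^{2\pi}v\,\dd\theta=0$. Using that the arc-length element is $\sqrt{(1+v)^2+(v')^2}\,\dd\theta=(1+v+\tfrac12(v')^2+\cdots)\dd\theta$ and $(1+v)^p=1+pv+\tfrac{p(p-1)}2v^2+\cdots$, a second-order expansion yields $|\Gamma|=2\pi+\tfrac12\int_0^{2\pi}(v')^2\dd\theta+O(\|v\|_{W^{1,\infty}}\|v\|_{H^1}^2)$ and $\int_\Gamma|x|^p\dd\mathcal H^1=2\pi+\tfrac12\int_0^{2\pi}(v')^2\dd\theta+\tfrac{p(p+1)}2\int_0^{2\pi}v^2\dd\theta+O(\|v\|_{W^{1,\infty}}\|v\|_{H^1}^2)$, and hence
\begin{equation}\label{eq:Fpexp}
F_p(\Gamma)=1+\frac{p}{4\pi}\Bigl((p+1)\!\int_0^{2\pi}\!v^2\dd\theta-\!\int_0^{2\pi}\!(v')^2\dd\theta\Bigr)+O\bigl(\|v\|_{W^{1,\infty}}\|v\|_{H^1}^2\bigr).
\end{equation}

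\textbf{Parts (ii) and (iii).} For \textbf{(ii)}, with $p<3$: since $v$ has no zeroth and no first Fourier mode, comparing Fourier coefficients gives a Poincaré-type inequality $(p+1)\int_0^{2\pi}v^2\dd\theta-\int_0^{2\pi}(v')^2\dd\theta\le-c_p\|v\|_{H^1}^2$ with $c_p>0$ (the critical mode is $k=2$, where $p+1-k^2=p-3<0$). As $\|v\|_{H^1}^2\le2\pi\|v\|_{W^{1,\infty}}^2$, the error in \eqref{eq:Fpexp} is $o(\|v\|_{H^1}^2)$, so $F_p(\Gamma)<1=F_p(\partial\cB)$ for every nearby centrally symmetric $\Gamma$ that is not a circle, i.e.\ the disk is a strict local maximiser. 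For \textbf{(iii)}, with $p>3$: take $v_n(\theta)=\tfrac1n\cos2\theta$ and let $\Gamma_n$ be the corresponding (centrally symmetric, centroid-at-origin) polar graph, so $\Gamma_n\to\partial\cB$ uniformly; here $(p+1)\int_0^{2\pi}v_n^2\dd\theta-\int_0^{2\pi}(v_n')^2\dd\theta=\pi(p-3)/n^2>0$, so \eqref{eq:Fpexp} gives $F_p(\Gamma_n)=1+\tfrac{p(p-3)}{4n^2}+O(n^{-3})>1$ for $n$ large, contradicting \eqref{eq:gammacponeintro}.

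\textbf{Main obstacle.} The genuinely delicate step is the uniform remainder bound in \eqref{eq:Fpexp}: one must expand $(1+v)^p\sqrt{(1+v)^2+(v')^2}$ and $|\Gamma|^{-p-1}$ and check that every term of order $\ge3$ is controlled by $\|v\|_{W^{1,\infty}}\|v\|_{H^1}^2$ — each monomial $v^a(v')^b$ with $a+b\ge3$ being pointwise at most $\|v\|_{W^{1,\infty}}$ times a quadratic expression in $(v,v')$ — so that in (ii) the negative-definite quadratic form dominates for \emph{all} small perturbations, not merely along fixed directions. This is exactly why ``nearly circular'' must be quantified in a norm, such as $W^{1,\infty}$, controlling both $v$ and $v'$, and why the Poincaré inequality has to be used with a spectral gap uniform over the admissible class. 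Granting this estimate, the algebra behind \eqref{eq:Fpexp} and parts (i) and (iii) are routine.
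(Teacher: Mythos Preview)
Your proof is correct and follows essentially the same route as the paper: Jensen's inequality to reduce $p\in(0,2]$ to the case $p=2$ in part~(i), and a Fuglede-type second-order expansion in polar-graph coordinates, combined with the Fourier observation that central symmetry forces the lowest surviving mode to be $k=2$, for parts~(ii) and~(iii). The only substantive difference is in (ii): the paper works along fixed one-parameter families $R_\varepsilon=1+\varepsilon r$ with $r$ fixed and $\varepsilon\to0$, whereas you aim for a genuine $W^{1,\infty}$-neighborhood statement and therefore need the uniform remainder $O(\|v\|_{W^{1,\infty}}\|v\|_{H^1}^2)$ you flag as the main obstacle --- your version is slightly stronger, but the computation and the key spectral-gap observation $(p+1)-k^2\le p-3<0$ are identical.
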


This naturally leads to the following conjecture.
\begin{conjecture}\label{con:p3}
\eqref{eq:gammacponeintro} holds for all $p\leq 3$ and all closed Lipschitz curves 
$\Gamma$ with the origin as its centroid.
\end{conjecture}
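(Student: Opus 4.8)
The plan is to collapse the whole range $p\in(0,3]$ onto the single endpoint $p=3$ by an interpolation in the exponent, and then to attack the critical case $p=3$ itself by the direct method, since the local analysis already identifies $p=3$ as the marginal exponent.

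\textbf{Step 1: reduction to $p=3$.}
Fix a closed Lipschitz curve $\Gamma$ with centroid at the origin and length $L=|\Gamma|$, and set
\[
g(p):=\log\Big(\int_\Gamma |x|^p\,\dd\mathcal H^1(x)\Big),\qquad
h(p):=\log\frac{L^{p+1}}{(2\pi)^p}=(p+1)\log L-p\log(2\pi).
\]
Inequality \eqref{eq:gammacponeintro} is precisely $g(p)\le h(p)$. Here $h$ is \emph{affine} in $p$, while $g$ is \emph{convex} in $p$: this is the classical log-convexity of moments, i.e.\ $g''(p)\ge 0$ is Cauchy--Schwarz for the tilted measure $|x|^p\,\dd\mathcal H^1$. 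Two values of $p$ then pin $g$ beneath $h$. At $p=0$ one has $\int_\Gamma 1\,\dd\mathcal H^1=L$, so $g(0)=\log L=h(0)$ for \emph{every} $\Gamma$. Hence, assuming only the endpoint bound $g(3)\le h(3)$, writing $p=\tfrac p3\cdot 3+(1-\tfrac p3)\cdot 0$ and using convexity of $g$ together with affineness of $h$ gives
\[
g(p)\le \tfrac p3\,g(3)+\big(1-\tfrac p3\big)g(0)\le \tfrac p3\,h(3)+\big(1-\tfrac p3\big)h(0)=h(p),\qquad 0\le p\le 3.
\]
Thus Conjecture~\ref{con:p3} is \emph{equivalent} to its single critical instance $p=3$. (As a by-product this recovers Theorem~\ref{th:pthmoments}(i) without the Jensen step: interpolating between $p=0$ and the Hurwitz case $p=2$ (\cite{Hu}) yields \eqref{eq:stp} for all $p\in[0,2]$.)

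\textbf{Step 2: why $p=3$ is the hard endpoint.}
Expanding the scale-invariant quotient in \eqref{eq:gammacponeintro} at the disk, with a nearby curve written as a radial graph $(1+\rho(\theta))(\cos\theta,\sin\theta)$, one finds that once the centroid constraint kills the $n=\pm1$ Fourier modes of $\rho$, the $n$-th mode contributes a multiple of $p\big((p+1)-n^2\big)|\hat\rho_n|^2$. The dangerous mode is $n=2$, carrying a coefficient proportional to $p-3$; this is exactly the dichotomy of Theorem~\ref{th:pthmoments}(ii)--(iii). At $p=3$ this leading coefficient \emph{vanishes}, so the disk is a degenerate maximiser and the inequality is marginal precisely at the endpoint we must prove.

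\textbf{Step 3: direct method for $p=3$.}
I would parametrise competitors by arc length, $\gamma:\R/L\Z\to\R^2$ with $|\gamma'|=1$ a.e.\ and $\int_0^L\gamma=0$, and maximise $\int_0^L|\gamma|^3\,\dd s$. A closed curve of length $L$ has diameter at most $L/2$, and its centroid, an average of its points, lies in its convex hull; with the centroid at the origin this forces $|x|\le L/2$ on $\Gamma$, so all competitors lie in a fixed ball. A maximising sequence is therefore uniformly bounded and, by Arzel\`a--Ascoli, converges uniformly to a $1$-Lipschitz limit; the functional is continuous under this convergence, and if the limit fails to be unit speed its length drops, whereupon rescaling about the origin restores length $L$, fixes the centroid, and strictly increases $\int|x|^3$. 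Hence a maximiser of length exactly $L$ exists and satisfies the constrained Euler--Lagrange equation
\[
3|\gamma|\,\gamma=\mathbf b-\frac{\dd}{\dd s}\big(\tau(s)\,\gamma'\big),
\]
with a scalar tension $\tau$ enforcing $|\gamma'|=1$ and a constant vector $\mathbf b$ enforcing the centroid constraint; the circle solves this with constant $|\gamma|$, constant curvature and $\mathbf b=0$. The goal is the rigidity statement that the circle is the \emph{only} closed, centroid-free solution.

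\textbf{Main obstacle.}
The crux is exactly this rigidity at the degenerate exponent. Because the second variation vanishes in the $n=2$ direction at $p=3$, the circle is a degenerate critical point and no linearised non-degeneracy argument can close the proof; one must exclude non-circular periodic solutions of the Euler--Lagrange system by a genuinely global mechanism. I would look for a conserved quantity or virial (Pohozaev-type) identity special to the cubic weight --- note that $\Delta|x|^3=9|x|$ in the plane, which suggests pairing the equation with the position field $\gamma$ (or with $|\gamma|^2$) to extract monotonicity --- and, in parallel, try to prove a priori that a maximiser is star-shaped about the origin and centrally symmetric, reducing the equation to a first-order problem for a radius $r(\theta)$ amenable to a shooting/phase-plane analysis forcing $r\equiv\mathrm{const}$. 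Showing that no non-circular closed solution survives at the marginal value $p=3$ is the single step on which, by Step~1, the entire conjecture rests.
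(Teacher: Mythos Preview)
The statement you are attempting is \emph{Conjecture}~\ref{con:p3}: the paper does not prove it and offers no argument for it beyond the evidence in Theorem~\ref{th:pthmoments}. There is therefore no ``paper's proof'' to compare against; the question is simply whether your proposal settles the conjecture.

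Your Step~1 is correct and is a genuine contribution the paper does not contain. The log-convexity of $p\mapsto\log\int_\Gamma|x|^p\,\dd\mathcal H^1$ together with the trivial equality at $p=0$ does reduce the full range $p\in(0,3]$ to the single endpoint $p=3$, so the conjecture is equivalent to its critical instance. This is sharper than the paper's monotonicity remark (that $C_p$ is non-decreasing via Jensen), since it pins down exactly which case one must prove. Your parenthetical by-product, however, only recovers \eqref{eq:gammacponeintro} for closed curves with $p\in[0,2]$; Theorem~\ref{th:pthmoments}\,(i) as stated in the paper concerns the inner parallel curves $S_t$, which need not be closed, and still requires the $\Sigma_t$-construction of Theorem~\ref{th:curveconstr}.

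The gap is Step~3, and you identify it yourself: you establish existence of a maximiser and write down the Euler--Lagrange system, but you do not prove that the circle is the only closed solution at $p=3$. Since the second variation degenerates in the $n=2$ mode precisely at this exponent (your Step~2, which matches the computation in Proposition~\ref{prop:p-moment}), no local argument can close this, and the global mechanisms you list --- a Pohozaev-type identity exploiting $\Delta|x|^3=9|x|$, a priori star-shapedness, a phase-plane reduction --- are suggestions rather than arguments. Until one of them is actually carried out, the conjecture remains open; your proposal is a clean reduction plus a program, not a proof.
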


In the case $p \in(0,2]$, Theorem \ref{th:pthmoments} follows from Theorem \ref{th:isomom} and Theorem \ref{th:curveconstr} using Jensen's inequality. For the local optimality for $p<3$ in Theorem \ref{th:pthmoments} (ii), we follow a Fuglede-type argument \cite{Fu}. From these computations, we also obtain Theorem \ref{th:pthmoments} (iii), where symmetry breaking occurs for $p>3$.

\bigskip

Theorem \ref{th:isomom} and  Theorem \ref{th:pthmoments} are of general interest as (weighted) isoperimetric inequalities have recently received great attention \cite{Al, BBMP99, CROS, CGPRS}, see also \cite{Fusco, Ossiso, OssBon, Bon, CL} on quantitative isoperimetric inequalities. 
In the present paper, we consider the moment of inertia of the inner parallel curves $S_{t}$ and compare it with the corresponding quantity for
a disk of the same perimeter. This is a relatively unusual setting as our constraints do not involve the area of the domain $\Omega$, but only its perimeter. 
In the case of centrally symmetric domains, or more generally for domains $\Omega$ for which the centroid $c(t)$ defined in \eqref{eq:defcentS_t} is independent of $t$, we can deduce from Theorem \ref{th:isomom}  a result going back to Hadwiger~\cite{H56}, see Corollary \ref{co:integriso} by integrating over $t$.

\bigskip

As an application of Theorem \ref{th:isomom}, we obtain an isoperimetric inequality for the magnetic Robin Laplacian. More precisely, considering the magnetic Robin Laplacian with a negative boundary parameter $\beta$ and a sufficiently small constant magnetic field $b$, the ground state energy is expressed as follows
\[
\lambda_{1}^{\beta,b}(\Omega)=
\inf_{\begin{smallmatrix}
u\in H^1(\Omg)\\
\|u\|_{L^2(\Omg)}=1    
\end{smallmatrix} }
\left(\int_{\Omg}\left|(-\ii\nabla  -b{\bf A})
u\right|^2+\beta\int_{\p\Omg}|u|^2\dd\cH^1(x)\right),\]
where ${\bf A}$ is a vector field in $\Omg$ with $\curl {\bf A} = 1$.
It was shown in~\cite[Theorem 4.8]{KL} that the corresponding ground state energies for convex and centrally symmetric domains $\Omega$ and a disk $\cB$ of the same perimeter satisfy $\lambda_{1}^{\beta,b}(\Omega) \leq \lambda_{1}^{\beta,b}(\cB)$. 
Using Theorem \ref{th:isomom}, we can remove the convexity assumption on $\Omega$. 
\begin{theorem}[An isoperimetric inequality for the magnetic Robin Laplacian]\label{th:impmagintro}
    Let $\Omega \subset \mathbb{R}^{2}$ be a smooth, bounded and simply connected domain. Assume that $\Omg$ is centrally symmetric or, more generally, that the centroid of $S_t(\Omg)$ is  independent of $t$
    for all $t\ge0$ with $S_t(\Omg)\neq\emptyset$. Let $\beta < 0$ be the negative Robin parameter, and let $0<b<b_0(|\partial\Omega|, \beta)$, where $b_0(|\partial\Omega|, \beta)$ depends on $|\partial\Omega|$ and $\beta$. Then the lowest eigenvalue of the magnetic Robin Laplacian on $\Omega$ with constant magnetic field of strength $b$ and Robin boundary conditions with parameter $\beta$ satisfies
    \begin{equation}\label{eq:magintro}
        \lm_1^{\beta,b}(\Omg) \le \lm_1^{\beta,b}(\cB)\,,
    \end{equation}
    where  $\cB\subset\dR^2$ is the disk having the same perimeter as $\Omg$. Equality in \eqref{eq:magintro} occurs if and only if $\Omg$ is a disk.
\end{theorem}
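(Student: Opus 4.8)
The plan is to bound $\lambda_1^{\beta,b}(\Omega)$ from above by the Rayleigh quotient of a trial function built from the radial ground state of $\cB$, comparing the three terms of the energy one by one against their counterparts on $\cB$: the kinetic term via Hartman's inequality \eqref{eq:sthartest}, the magnetic term via Theorem~\ref{th:isomom}, and the boundary term via equality of perimeters. Since $\lambda_1^{\beta,b}$ is gauge invariant, I will use the symmetric gauge $\mathbf{A}(x)=\tfrac12(x-c)^\perp$ centred at the common centroid $c$ of the curves $S_t$; after a translation we may take $c=0$, so $\cB$ is centred at the origin. For a real-valued $u$ the quadratic form simplifies, since the cross term vanishes, to
\begin{equation*}
\mathcal{Q}_\Omega[u]:=\int_\Omega|(-\ii\nabla-b\mathbf{A})u|^2+\beta\int_{\partial\Omega}|u|^2\,\dd\mathcal{H}^1
=\int_\Omega|\nabla u|^2+\frac{b^2}{4}\int_\Omega|x|^2|u|^2+\beta\int_{\partial\Omega}|u|^2\,\dd\mathcal{H}^1,
\end{equation*}
because $|\mathbf{A}|^2=\tfrac14|x|^2$. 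The hypothesis that the centroid of $S_t(\Omega)$ is independent of $t$ enters precisely here: it ensures that the single fixed weight $|x|^2=|x-c|^2$ reproduces, simultaneously for all $t$, the moment of inertia about the centroid appearing in \eqref{eq:isomom}.

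I would first record two facts about $\cB$ valid for $0<b<b_0(|\partial\Omega|,\beta)$, and I expect establishing them to be the main obstacle, as they pin down the threshold $b_0$: \textbf{(a)} $\lambda_1^{\beta,b}(\cB)<0$, and \textbf{(b)} the ground state of $\cB$ is radial, $u_{\cB}(x)=f(R-|x|)$ with $f>0$ smooth on $[0,R]$, where $R=|\partial\Omega|/(2\pi)$ is the radius of $\cB$. Fact (a) follows from continuity in $b$ and $\lambda_1^{\beta,0}(\cB)\le\beta|\partial\cB|/|\cB|<0$ (test with a constant). Fact (b) holds because, in the symmetric gauge, the angular mode $e^{\ii m\theta}$ contributes an effective potential $\bigl(\tfrac{m}{r}-\tfrac{br}{2}\bigr)^2$, and near $r=R$ the mode $m=0$ is energetically favoured once $bR^2$ is small, which is exactly where the smallness of $b$ (and hence the dependence of $b_0$ on $|\partial\Omega|$ and $\beta$) is used. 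Both are essentially contained in \cite{KL}, and I would cite or reprove them so that $\lambda_1^{\beta,b}(\cB)=\mathcal{Q}_{\cB}[u_{\cB}]$ with $\|u_{\cB}\|_{L^2(\cB)}=1$.

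Next I would take the trial function $u(x):=f(\operatorname{dist}(x,\partial\Omega))$ on $\Omega$. By \eqref{eq:sthartest} one has $|S_t(\Omega)|\le|\partial\Omega|-2\pi t$, so $S_t(\Omega)=\varnothing$ for $t>R$; hence the inradius of $\Omega$ is at most $R$, $u$ is a well-defined Lipschitz function, and $f$ is never evaluated outside $[0,R]$. Writing $d(x)=\operatorname{dist}(x,\partial\Omega)$ and using the coarea formula ($|\nabla d|=1$ a.e., $d^{-1}(t)=S_t(\Omega)$), the three terms become
\begin{align*}
\int_\Omega|\nabla u|^2&=\int_0^R f'(t)^2\,|S_t(\Omega)|\,\dd t
\;\le\;\int_0^R f'(t)^2\,|S_t(\cB)|\,\dd t,\\
\frac{b^2}{4}\int_\Omega|x|^2|u|^2&=\frac{b^2}{4}\int_0^R f(t)^2\Bigl(\int_{S_t(\Omega)}|x-c(t)|^2\,\dd\mathcal{H}^1\Bigr)\dd t
\;\le\;\frac{b^2}{4}\int_0^R f(t)^2\Bigl(\int_{S_t(\cB)}|x|^2\,\dd\mathcal{H}^1\Bigr)\dd t,\\
\beta\int_{\partial\Omega}|u|^2\,\dd\mathcal{H}^1&=\beta f(0)^2\,|\partial\Omega|=\beta f(0)^2\,|\partial\cB|,
\end{align*}
where the first inequality uses $|S_t(\Omega)|\le|\partial\Omega|-2\pi t=|S_t(\cB)|$ with $f'^2\ge0$, the second uses Theorem~\ref{th:isomom} with $f^2\ge0$ and $b^2>0$, and the last uses $|\partial\Omega|=|\partial\cB|$. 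Summing gives $\mathcal{Q}_\Omega[u]\le\mathcal{Q}_\cB[u_\cB]=\lambda_1^{\beta,b}(\cB)$.

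Finally I would close the argument through the normalisation, where the negativity in (a) is essential. Again by \eqref{eq:sthartest}, $\|u\|_{L^2(\Omega)}^2=\int_0^R f^2\,|S_t(\Omega)|\,\dd t\le\int_0^R f^2\,|S_t(\cB)|\,\dd t=1$. Since the numerator satisfies $\mathcal{Q}_\Omega[u]\le\lambda_1^{\beta,b}(\cB)<0$ while $0<\|u\|_{L^2(\Omega)}^2\le1$, dividing a negative number by a quantity at most $1$ only decreases it, so
\begin{equation*}
\lambda_1^{\beta,b}(\Omega)\le\frac{\mathcal{Q}_\Omega[u]}{\|u\|_{L^2(\Omega)}^2}\le\frac{\lambda_1^{\beta,b}(\cB)}{\|u\|_{L^2(\Omega)}^2}\le\lambda_1^{\beta,b}(\cB).
\end{equation*}
For the equality case: if $\Omega$ is not a disk, then by the strict part of Theorem~\ref{th:isomom} the inequality in the magnetic term is strict on a set of $t\in[0,R)$ of positive measure on which $f>0$; as $b>0$, the numerator is then strictly below $\lambda_1^{\beta,b}(\cB)<0$, and the chain above yields $\lambda_1^{\beta,b}(\Omega)<\lambda_1^{\beta,b}(\cB)$. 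The converse is immediate, since for a disk $u$ is the ground state and all inequalities are equalities.
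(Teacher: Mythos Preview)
Your argument is correct and is exactly the transplantation-of-the-radial-ground-state method underlying \cite[Theorem~4.8]{KL}, which the paper simply cites rather than reproving: the paper's own proof just observes that Theorem~\ref{th:isomom} verifies condition~\eqref{eq:condition_St} (your middle inequality) with $x_0$ the common centroid, and then invokes \cite{KL} as a black box. So your approach and the paper's coincide, except that you have written out in full the Rayleigh-quotient comparison that the paper outsources to the reference; in particular the two facts you flag as ``the main obstacle'' (negativity of $\lambda_1^{\beta,b}(\cB)$ and radiality of its ground state for small $b$) are precisely the ingredients established in \cite{KL} that fix the threshold $b_0(|\partial\Omega|,\beta)=\min\bigl(R^{-2},4\sqrt{-\beta}\,R^{-3/2}\bigr)$.
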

If $|\p\Omg|=2\pi$, then we have the explicit expression $b_0(|\p\Omg|,\beta)=\min\big(1,4\sqrt{-\beta})$.

\bigskip

{\bf Structure of the paper. }
The rest of the paper is organized as follows.  In Section~\ref{sec:pre},  we introduce some notation and auxiliary results on inner parallel curves.   In Section~\ref{sec:proof},  we prove Theorem \ref{th:curveconstr}. We use this in Section \ref{sec:proofmomin} to prove Theorem~\ref{th:isomom}. In Section \ref{sec:proofpmom}, we show Theorem~\ref{th:pthmoments}. More precisely, the proof and the precise statement of Theorem~\ref{th:pthmoments} (i) can be found in Corollary~\ref{co:iso-p-m} and for Theorem~\ref{th:pthmoments} (ii), (iii) we refer to Proposition~\ref{prop:p-moment}. Some background material on the magnetic Robin Laplacian and the proof of Theorem~\ref{th:impmagintro} is given in Section \ref{sec:magnetic}. In Section \ref{sec:appl}, we present two simple applications of Theorem~\ref{th:isomom} and  Theorem~\ref{th:curveconstr}, namely Corollary~\ref{co:hartimp} on an improved version of \eqref{eq:sthartest}, and Corollary~\ref{co:integriso} on moments of inertia of domains. 
Appendix~\ref{app} contains an elementary proof of Mirek Ol\v{s}\'{a}k on the auxiliary geometric inequality given in Proposition~\ref{prop:geometric_bound}.

\bigskip

\section{Preliminaries}\label{sec:pre}

\subsection{Notation}\label{ssec:notation}
We introduce for a piecewise-$C^1$ mapping $\gamma\colon [0,L]\arr\dR^2$ the length of the
closed and not necessarily simple curve parametrized by $\gamma$,
\[
	\ell(\gamma) := \int_0^L |\gamma'(s)|\dd s.
\]
We also use the notation $\gamma([a,b]) = \{\gamma(s)\colon s\in[a,b]\}$ for $a,b\in[0,L]$, $a < b$. We say that $\gamma_1,\gamma_2\colon[0,L]\arr\dR^2$ parametrize the same curve if there exists a continuous bijection $\psi\colon [0,L]\arr [0,L]$ such that $\gamma_1 = \gamma_2\circ\psi$.
A subset $U\subset\dR^2$ is said to be \emph{centrally symmetric} if it coincides
with its reflection $\{-x\colon x\in U\}$ with respect to the origin.
\subsection{Inner parallel curves} \label{ssec:inner}
Let $\Omega\subset\dR^2$ be a bounded, simply-connected smooth domain.
In this subsection we recall some properties of the inner parallel curves of $\Omg$. 

\subsubsection*{Parametrization of the boundary.}

Let us denote by $L=|\p\Omg|$ the perimeter of $\Omg$. Consider the arc-length parametrization of $\p\Omg$ oriented in the  counter-clockwise direction,
\[
	s\in\R/ (L\Z) \mapsto \gamma(s)=\big(\gamma_1(s),\gamma_2(s) \big)^\top\in\R^2,
\]
which identifies $\p\Omega$ with $\R/ (L\Z) \simeq [0,L)$; the function $\gamma$ is smooth which matches with the smoothness hypothesis we imposed on $\p\Omg$.

The vector $\gamma'(s)=\big(\gamma'_1(s),\gamma'_2(s) \big)^\top$ is the unit tangent vector to $\p\Omg$ at $\gamma(s)$ and points in the counter-clockwise direction. The unit normal vector at $\gamma(s)$ pointing inwards the domain $\Omg$ is given by
\begin{equation}\label{eq:def-n}
\nb(s)=(-\gamma'_2(s), \gamma'_1(s))^\top.
 \end{equation}
We introduce the curvature 
\begin{equation}\label{eq:curvature}
	\kp(s) :=\gamma_2''(s)\gamma'_1(s)- \gamma_1''(s)\gamma'_2(s)
\end{equation}
of $\p\Omg$ at the point $\gamma(s)$.
In particular, the Frenet formula
\begin{equation}\label{eq:def-k}
	\gamma''(s)=\kp(s)\nb(s)\,,
\end{equation}
holds. Recall that, since $\p\Omg$ is a smooth closed simple curve,  the total curvature identity
\cite[Corollary 2.2.2]{Kl78} yields
\begin{equation}\label{eq:tot-curv}
\int_0^{L}\kp(s)\dx s=2\pi\,.
\end{equation}
We remark that within the chosen sign convention the curvature of a convex domain is non-negative.

\subsubsection*{Properties of inner parallel curves}

We define the \emph{in-radius} of $\Omg$ by
\begin{equation}\label{eq:inradius}	
	r_{\rm i}(\Omg) := \max_{x\in\Omg}\rho(x),
\end{equation}
where $\rho$ is the distance function given by
\begin{equation}\label{eq:distance}
	\rho(x)\colon\Omg\arr\dR_+,\qquad \rho(x) := \inf_{y\in\p\Omg} |x-y|.
\end{equation}
Recall that the inner parallel curve for $\Omg$ is 
the level set of the distance function
\begin{equation}\label{eq:stdef*}
	S_t = \{x\in\ov\Omg\colon\rho(x) = t\},\qquad t\in [0,r_{\rm i}(\Omg)).
\end{equation}
For almost every $t\in(0,r_i(\Omg))$,  
the inner parallel curve $S_t$ is a finite union of disjoint piecewise smooth simple closed curves,
and the curve $S_t$ admits a parametrization as in Proposition~\ref{prop:parallelcurve} below,
which was proved in ~\cite{H64}, see also  
\cite[Theorem 4.4.1]{SST} and \cite[Propositoin A.1]{S01} for more modern presentations and further refinements.
\begin{prop}\label{prop:parallelcurve}
	There exists a subset $\cL\subset
	[0,r_{\rm i}(\Omg))$, whose complement is of Lebesgue measure zero, such that for any $t\in\cL$,
	there exist $m\in\dN$ and
	\[
		0\le a_1 < b_1 < a_2 < b_2 < \dots a_m < b_m\le L,
	\]	 
	such that the inner parallel curve $S_t$ consists of
	the union of finitely many smooth curves parametrized by
	\[
		[a_k,b_k]\ni s\mapsto \gamma(s) +t{\bf n}(s),\qquad k\in\{1,2,\dots,m\},
	\]
	which forms a union of finitely many  piecewise-smooth simple closed curves.
\end{prop}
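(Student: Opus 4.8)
\emph{Proof proposal.} The plan is to study the inward normal map $\Phi(s,r):=\gamma(s)+r\,\nb(s)$, $s\in\R/(L\Z)$, $r\ge0$, together with its cut function, and then to remove a Lebesgue--null set of parameters $t$ by Sard/coarea arguments resting on the structure of the cut locus. From the Frenet formula \eqref{eq:def-k} one gets $\nb'(s)=-\kp(s)\gamma'(s)$, hence $\p_s\Phi(s,r)=(1-r\kp(s))\gamma'(s)$ and $\p_r\Phi(s,r)=\nb(s)$, so $\Phi$ has Jacobian $1-r\kp(s)$ and is a local diffeomorphism wherever $r\kp(s)<1$. Every $x\in\overline{\Omg}$ realizes its distance $\rho(x)$ (see \eqref{eq:distance}) at some $\gamma(s)\in\p\Omg$, whence $x=\Phi(s,\rho(x))$. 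Introduce the cut function
\[
\tau(s):=\sup\{\,r\ge0:\ \rho(\Phi(s,r))=r\,\}\in(0,r_{\rm i}(\Omg)],
\]
with $r_{\rm i}(\Omg)$ as in \eqref{eq:inradius}. I would use the following facts, the finer of which are precisely the content of \cite{H64} and its modern presentations \cite[Theorem 4.4.1]{SST}, \cite[Proposition A.1]{S01}: along each normal ray $\rho(\Phi(s,\cdot))$ is the identity on $[0,\tau(s)]$ and strictly below it afterwards; $\tau(s)\le1/\kp(s)$ whenever $\kp(s)>0$ (a minimizer cannot lie beyond the first focal point $r=1/\kp(s)$); $\tau$ is continuous on $\R/(L\Z)$; and $\Phi$ is injective on $\{(s,r):0\le r<\tau(s)\}$ with image $\overline{\Omg}\setminus\Sigma$, where the cut locus $\Sigma:=\{\Phi(s,\tau(s)):s\in\R/(L\Z)\}$ is a finite tree assembled from finitely many simple rectifiable arcs, with finitely many branch points (each with at least three nearest boundary points), finitely many focal endpoints (each with a single, focal, nearest point), while every interior point of an edge of $\Sigma$ has exactly two nearest boundary points. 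Directly from the definition of $\tau$,
\[
S_t=\{\Phi(s,t):\ \tau(s)\ge t\},\qquad 0\le t<r_{\rm i}(\Omg).
\]

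It thus suffices to pick $\cL$ so that, for $t\in\cL$, the closed set $E_t:=\{s\in\R/(L\Z):\tau(s)\ge t\}$ is a finite union of nondegenerate closed arcs and the curves $s\mapsto\Phi(s,t)$ over these arcs assemble into piecewise-smooth simple closed curves. I would take $\cL$ to be $[0,r_{\rm i}(\Omg))$ with the following sets removed: \textbf{(a)} the at most countably many levels $t$ at which $\tau$ is constant on some arc or has a strict local extremum equal to $t$; \textbf{(b)} the null set of levels $t$ for which $\Sigma\cap\{\rho=t\}$ is infinite---null because $\Sigma$ is a finite union of rectifiable arcs and $\rho$ is $1$-Lipschitz, so the coarea formula on $\Sigma$ gives $\int_0^\infty\#\big(\Sigma\cap\{\rho=t\}\big)\,\mathrm{d}t\le\mathcal{H}^1(\Sigma)<\infty$---together with the finitely many levels $t=\rho(x)$ for $x$ a branch point or focal endpoint of $\Sigma$; and \textbf{(c)} the null set of $t$ for which $1/t$ is a critical value of the smooth function $\kp$ on $\R/(L\Z)$. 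For such $t$: each $s$ with $\tau(s)=t$ maps under $\Phi(\cdot,t)$ into the finite set $\Sigma\cap\{\rho=t\}$, whose points all have exactly two nearest boundary points, so $\{\tau=t\}$ is finite; by (a), at level $t$ the function $\tau$ is neither locally constant nor has a strict local extremum, so each point of $\{\tau=t\}$ is a crossing ($\tau<t$ on one side, $\tau>t$ on the other), whence $E_t$ is exactly a finite disjoint union of nondegenerate closed arcs $[a_1,b_1],\dots,[a_m,b_m]$ with $0\le a_1<b_1<\dots<b_m\le L$; and by (c) there are only finitely many $s\in E_t$ with $1-t\kp(s)=0$.

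For $t\in\cL$ set $\gamma_t:=\gamma+t\,\nb$, so $S_t=\bigcup_{k=1}^m\gamma_t([a_k,b_k])$. Each $\gamma_t|_{[a_k,b_k]}$ is $C^\infty$, and since $\gamma_t'=(1-t\kp)\gamma'$ vanishes only at the finitely many points with $t\kp(s)=1$, it is an immersion off those points and has there a cusp; hence each $\gamma_t([a_k,b_k])$ is a finite union of smooth arcs. Two distinct such arcs can meet only on $S_t\cap\Sigma$, since a point of $S_t\setminus\Sigma$ has a unique nearest boundary point, hence a unique parameter $s\in\bigcup_k(a_k,b_k)$; and by the choices above each point $x\in S_t\cap\Sigma$ has exactly two nearest boundary points $\gamma(s_1),\gamma(s_2)$ with $\tau(s_1)=\tau(s_2)=t$, each an endpoint of exactly one of the arcs $[a_k,b_k]$, and $\nb(s_1)\neq\nb(s_2)$---so exactly two of the arcs end at $x$, from distinct directions. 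Consequently $S_t$ is a compact topological $1$-manifold without boundary (every point---interior points of the smooth arcs, the finitely many cusps, and the finitely many two-valent cut points---has a neighbourhood in $S_t$ homeomorphic to an interval) and is embedded (no self-crossings off $\Sigma$, simple passages at the cut points), hence a finite union of piecewise-smooth simple closed curves, parametrized exactly as in the statement.

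\textbf{Main obstacle.} The entire weight of the argument is the structural description of the cut locus of a bounded smooth simply connected planar domain---that $\Sigma$ is a finite tree of rectifiable arcs with finitely many branch points and focal endpoints, that $\tau$ is continuous with $\tau\le1/\kp$ where $\kp>0$, and that the normal map is a diffeomorphism up to $\Sigma$---which is classical but genuinely delicate and is exactly where smoothness of $\p\Omg$ enters; this is what \cite{H64}, \cite[Theorem 4.4.1]{SST}, \cite[Proposition A.1]{S01} supply. Granting it, the remaining ingredients---the Jacobian $1-r\kp$, the focal-point bound, the coarea estimate on $\Sigma$, and Sard applied to $\kp$---are routine.
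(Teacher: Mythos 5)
The paper does not prove Proposition~\ref{prop:parallelcurve} at all: it is imported as a known result from Hartman \cite{H64} and its modern presentations \cite[Theorem 4.4.1]{SST}, \cite[Proposition A.1]{S01}, so there is no in-paper argument to compare yours against line by line. Your sketch is a faithful reconstruction of how that result is established, and you are candid that the load-bearing input --- continuity of the cut function $\tau$, the focal bound $\tau\le 1/\kp$ where $\kp>0$, injectivity of $\Phi$ below the cut locus, and the finite-tree structure of $\Sigma$ --- is exactly what those references supply. Granting that input, your bookkeeping for the ``almost every $t$'' statement is sound: the coarea inequality on the rectifiable set $\Sigma$ gives finiteness of $\Sigma\cap\{\rho=t\}$ for a.e.\ $t$, the locally-constant and strict-local-extremum levels of $\tau$ are countable, and once $\{\tau=t\}$ is finite and contains no extremum, every point of it is a genuine crossing, so $E_t$ is a finite union of nondegenerate closed arcs and the two-to-one incidence at generic cut points glues the arcs into simple closed curves. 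Two minor remarks. First, your cusp analysis in the interior of an arc is vacuous: if $s_0\in(a_k,b_k)$ and $1-t\kp(s_0)=0$, the focal bound forces $\tau(s_0)=t$, so $s_0$ would be an endpoint of an arc, a contradiction; degenerate tangencies can therefore only occur at the gluing points, where they are absorbed into ``piecewise smooth.'' Second, your trichotomy of cut points (branch point, focal endpoint, two-point edge point) is not a priori exhaustive --- a cut point may have exactly two nearest boundary points one of which is focal --- but such points again belong to the finite exceptional set controlled by the cited structure theory, so this does not affect the argument.
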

Consider the mapping
\begin{equation}\label{eq:mapping}
(s,t)\in  \dR/(L\dZ)\times\big(0,r_{\rm i}(\Omg)\big)\mapsto \Phi(s,t):=\gamma(s)+t\nb(s)\in\R^2
\end{equation}
According to~\cite[Theorem 5.25]{L18} there exists $t_\star \in (0,r_{\rm i}(\Omg))$ such that the restriction of the mapping $\Phi$ in~\eqref{eq:mapping}
to the set $\dR/(L\dZ)\times(0,t_\star)$ is a smooth diffeomorphism onto its range. The range of this restriction is then given by a tubular neighbourhood of $\p\Omg$. It is not difficult to verify that 
$S_t = \Phi(\dR/(L\dZ),t)$ for all $t\in (0,t_\star)$. However, for $t \ge t_\star$ the same property, in general, does not hold. It can also be easily checked that for all $t\in(0,t_\star)$, the inner parallel curve $S_t$ is connected and $|S_t| = L-2\pi t$.

\begin{lemma}\label{lem:curv_bound}
	Let $t\in\cL$ and the associated numbers
	$m\in\dN$, $\{a_k\}_{k=1}^m$, $\{b_k\}_{k=1}^m$, be as in Proposition~\ref{prop:parallelcurve}.
	Then, for any $k\in\{1,2,\dots,m\}$ and any $s_0\in[a_k,b_k]$, it holds that $\kp(s_0)\le \frac{1}{t}$.
\end{lemma}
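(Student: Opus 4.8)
\textbf{Proof plan for Lemma~\ref{lem:curv_bound}.}

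The plan is to derive the bound $\kp(s_0)\le \frac1t$ from the geometric fact that the inner parallel curve $S_t$ lies inside $\Omg$, combined with the Frenet description of the boundary near $\gamma(s_0)$. Fix $k$ and $s_0\in[a_k,b_k]$. First I would use the local geometry of $\p\Omg$ near $\gamma(s_0)$: by Taylor expansion together with the Frenet formula \eqref{eq:def-k}, for $s$ near $s_0$ one has $\gamma(s) = \gamma(s_0) + (s-s_0)\gamma'(s_0) + \tfrac12(s-s_0)^2\kp(s_0)\nb(s_0) + o((s-s_0)^2)$. In the orthonormal frame $(\gamma'(s_0),\nb(s_0))$ centred at $\gamma(s_0)$, the boundary is thus locally the graph of a function whose second-order term has coefficient $\tfrac12\kp(s_0)$ in the inward normal direction; the osculating circle to $\p\Omg$ at $\gamma(s_0)$ (when $\kp(s_0)>0$) is the circle of radius $1/\kp(s_0)$ tangent to $\p\Omg$ at $\gamma(s_0)$ and centred at $\gamma(s_0) + \tfrac1{\kp(s_0)}\nb(s_0)$, i.e.\ on the inner side.

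The key step is the following observation about the point $p := \gamma(s_0) + t\,\nb(s_0) \in S_t$. By definition of $S_t$ in \eqref{eq:stdef*}, $\rho(p) = t$, so the open disk $D(p,t)$ of radius $t$ centred at $p$ is contained in $\Omg$ and does not meet $\p\Omg$. On the other hand, $\gamma(s_0)\in\p\Omg$ lies on the boundary circle $\partial D(p,t)$, and the inward normal to $\partial D(p,t)$ at $\gamma(s_0)$ is exactly $\nb(s_0)$, the same as the inward normal to $\p\Omg$. Hence $\p\Omg$ and $\partial D(p,t)$ are internally tangent at $\gamma(s_0)$ with $D(p,t)\subset\Omg$. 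Comparing second-order Taylor expansions of $\p\Omg$ and of the circle $\partial D(p,t)$ at the common tangency point — both written as graphs over the tangent line $\gamma'(s_0)$ — the inclusion $D(p,t)\subset\Omg$ forces the boundary $\p\Omg$ to bend away from $D(p,t)$ no faster than the circle does, which gives $\kp(s_0)\le \tfrac1t$. (If $\kp(s_0)\le 0$ the inequality is trivial, so one may assume $\kp(s_0)>0$.)

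I would make the comparison rigorous by the standard touching-circle argument: suppose for contradiction $\kp(s_0) > \tfrac1t$. Then for $s$ close to $s_0$, $|\gamma(s) - p|^2 = |(s-s_0)\gamma'(s_0) + (\tfrac12(s-s_0)^2\kp(s_0) - t)\nb(s_0) + o((s-s_0)^2)|^2 = t^2 - t\kp(s_0)(s-s_0)^2 + (s-s_0)^2 + o((s-s_0)^2) = t^2 + (1 - t\kp(s_0))(s-s_0)^2 + o((s-s_0)^2)$, using orthonormality of the frame. Since $1 - t\kp(s_0) < 0$, we get $|\gamma(s) - p| < t$ for $s\ne s_0$ sufficiently close to $s_0$, so $\rho(p) \le |\gamma(s)-p| < t$, contradicting $p\in S_t$ and $\rho(p)=t$. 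Therefore $\kp(s_0)\le\tfrac1t$. The main obstacle is simply bookkeeping the Taylor expansion carefully enough to extract the sign of the $(s-s_0)^2$ coefficient; everything else is the elementary interior-ball comparison. Note this argument uses only $p\in S_t$ and smoothness of $\p\Omg$ near $\gamma(s_0)$ (guaranteed by Proposition~\ref{prop:parallelcurve}), so it applies uniformly to each arc $[a_k,b_k]$.
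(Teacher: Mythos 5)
Your argument is correct and is essentially identical to the paper's proof: both expand $\gamma(s)$ to second order via the Frenet formula, compute $|\gamma(s)-(\gamma(s_0)+t\nb(s_0))|^2 = t^2+(1-t\kp(s_0))(s-s_0)^2+o((s-s_0)^2)$, and conclude from $\rho(\gamma(s_0)+t\nb(s_0))=t$ that $1-t\kp(s_0)\ge 0$. The surrounding discussion of osculating circles and interior tangency is just motivation for the same computation, so there is nothing to add.
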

\begin{proof}
	Let us introduce the notation
	\[
		\sfm(t):=\gamma(s_0)+t\nb(s_0).
	\]
	By the Frenet formula \eqref{eq:def-k} one has  $\gamma''(s_0)=\kp(s_0)\nb(s_0)$ and by Taylor's formula near $s_0$ we get
	\[
	\gamma(s)=\gamma(s_0)+(s-s_0)\gamma'(s_0)+\frac12(s-s_0)^2\kp(s_0)\nb(s_0)+\cO(|s-s_0|^3),\qquad s\arr s_0.
	\]
	Consequently, using orthogonality of $\gamma'(s_0)$ and $\nb(s_0)$ we get
	\[{\rm dist}(\gamma(s),\sfm(t))^2= t^2+(1-t\kp(s_0))(s-s_0)^2+\cO(|s-s_0|^3),\qquad s\arr s_0.\]
	Since $\sfm(t)\in S_t$,  then  ${\rm dist}(\gamma(s),\sfm(t))\geq t$ for $s$ in a neighbourhood of $s_0$, which is possible only when $1-t\kp(s_0)\geq 0$.
\end{proof}
In the next lemma we provide a simple construction of a closed but not necessarily simple curve which contains $S_t$.
The geometric bound as in Theorem~\ref{th:curveconstr} on its length
will only hold for $t$ {not larger than the inverse of 
the maximum of the curvature for the curve $\gamma$:
\[
	\kp_{\rm max}(\Omg) := \max_{s\in\dR/(L\dZ)}\kp(s)
\]

\begin{lemma}\label{lem:alphat}
	For $t\in(0,r_{\rm i}(\Omg))$ the mapping
	\begin{equation}\label{eq:aat}
	s\in\R/(L\Z)\mapsto \alpha_t(s):=\gamma(s)+t\nb(s)\in\R^2
	\end{equation}
	parametrizes a smooth closed, not necessarily simple curve
	such that
	\begin{myenum}
		\item $S_t\subset\aa_t([0,L])$
		for all $t\in\cL$.
		\item 
		$\ell(\aa_t) = L-2\pi t$ for all $t \le \frac{1}{\kp_{\rm max}(\Omg)}$,
		\item 
		$\ell(\aa_t) > L-2\pi t$
		for all $\frac{1}{\kp_{\rm max}(\Omg)} < t < r_{\rm i}(\Omg)$.
	\end{myenum}
\end{lemma}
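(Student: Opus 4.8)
\textbf{Proof plan for Lemma~\ref{lem:alphat}.}
The plan is to analyze the map $\alpha_t$ directly from its defining formula $\alpha_t(s) = \gamma(s) + t\nb(s)$. First I would compute the velocity: differentiating and using the Frenet formula $\gamma''(s) = \kp(s)\nb(s)$ together with $\nb'(s) = -\kp(s)\gamma'(s)$ (which follows from \eqref{eq:def-n} and \eqref{eq:curvature}), one gets $\alpha_t'(s) = (1 - t\kp(s))\gamma'(s)$. Since $\gamma$ is smooth and $\R/(L\Z)$-periodic, this shows $\alpha_t$ is smooth and closed; it need not be simple because $1 - t\kp(s)$ may change sign, causing the curve to backtrack along the normal lines (this is exactly the doubly-covered segment phenomenon in Figure~\ref{fig-dumbbell2}). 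That establishes the opening assertion of the lemma.

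For part (i), the inclusion $S_t \subset \alpha_t([0,L])$ is immediate from Proposition~\ref{prop:parallelcurve}: on each interval $[a_k,b_k]$ the curve $S_t$ is parametrized by $s \mapsto \gamma(s) + t\nb(s) = \alpha_t(s)$, so $S_t = \bigcup_k \alpha_t([a_k,b_k]) \subset \alpha_t([0,L])$. For parts (ii) and (iii), I would use the length formula
\[
  \ell(\alpha_t) = \int_0^L |\alpha_t'(s)|\dx s = \int_0^L |1 - t\kp(s)|\dx s.
\]
When $t \le 1/\kp_{\rm max}(\Omg)$ we have $t\kp(s) \le 1$ for all $s$, so the integrand equals $1 - t\kp(s)$ and, invoking the total curvature identity \eqref{eq:tot-curv}, $\ell(\alpha_t) = L - t\int_0^L \kp(s)\dx s = L - 2\pi t$. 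This gives (ii). For (iii), when $t > 1/\kp_{\rm max}(\Omg)$ the set where $1 - t\kp(s) < 0$ is nonempty and, by continuity of $\kp$, has positive measure; there $|1 - t\kp(s)| = t\kp(s) - 1 > -(1 - t\kp(s))$, while elsewhere $|1-t\kp(s)| = 1 - t\kp(s)$, so $\int_0^L |1-t\kp(s)|\dx s > \int_0^L (1-t\kp(s))\dx s = L - 2\pi t$, which is (iii).

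I do not expect a serious obstacle here; the only points requiring a little care are the derivation of $\nb'(s) = -\kp(s)\gamma'(s)$ (a direct computation from \eqref{eq:def-n} using $\gamma_1'' = -\kp\gamma_2'$, $\gamma_2'' = \kp\gamma_1'$, which are equivalent to \eqref{eq:def-k}), and the observation that smoothness of $\kp$ forces the sign-negative set to have positive Lebesgue measure rather than being a single point. Everything else is the total curvature identity plus an elementary estimate on $\int |1 - t\kp|$. A remark worth including: strictly one should note the range of validity — part (ii) is vacuous unless $1/\kp_{\rm max}(\Omg) > 0$, which holds since $\kp_{\rm max}(\Omg) < \infty$ by smoothness, and part (iii) is vacuous when $1/\kp_{\rm max}(\Omg) \ge r_{\rm i}(\Omg)$, e.g.\ for the disk where $\kp \equiv 1/r_{\rm i}$ and the two thresholds coincide.
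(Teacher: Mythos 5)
Your proposal is correct and follows essentially the same route as the paper: compute $|\alpha_t'(s)|=|1-t\kp(s)|$, get (i) from Proposition~\ref{prop:parallelcurve}, and evaluate $\int_0^L|1-t\kp(s)|\dx s$ via the total curvature identity \eqref{eq:tot-curv}, with the sign analysis of $1-t\kp$ giving equality in (ii) and strict inequality in (iii). Your added details (the derivation of $\nb'=-\kp\gamma'$ and the positive-measure argument for strictness in (iii)) are accurate refinements of steps the paper leaves implicit.
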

\begin{remark}
	According to~\cite{PI} the domain $\Omg$ contains a disk of radius $\frac{1}{\kp_{\rm max}(\Omg)}$. In other words, it holds that
	\begin{equation}\label{eq:PI}
	r_{\rm i}(\Omg) \ge \frac{1}{\kp_{\rm max}(\Omg)}.
	\end{equation}
	The equality occurs for some special types of domains such as a disk or (going beyond smooth domains) for a convex hull of two disjoint disks of equal radius. The original work~\cite{PI} is hardly available and the complete proof can be found in~\cite[Proposition 2.1]{HT95}.
    The inequality~\eqref{eq:PI} shows that, in general, a more sophisticated method than in Lemma~\ref{lem:alphat} is needed 
    to construct for any $t\in (0,r_{\rm i}(\Omg))$ a closed curve of length not larger than $L-2\pi t$, which contains the inner parallel curve $S_t$.
\end{remark}

\begin{proof}[Proof of Lemma~\ref{lem:alphat}]
	Notice that smoothness  of $\gamma$ on $\dR/(L\dZ)$ ensures that $\alpha_t$ is smooth on $\dR/(L\dZ)$ as well.
	It is also clear from Proposition~\ref{prop:parallelcurve}
	that $S_t\subset\aa_t([0,L])$ for all $t\in\cL$.
	
	Using the identity~\eqref{eq:tot-curv} 
	we get for any $t \le \frac{1}{\kp_{\rm max}(\Omg)}$
	\[
	\ell(\aa_t)=\int_0^L|\dot\alpha(s)|\dx s = \int_0^L|1-t\kp(s)|\dd s=\int_0^L(1-t\kp(s))\dd s=L-2\pi t.
	\]
	Analogously we get for any $t \in(\frac{1}{\kp_{\rm max}(\Omg)},r_{\rm i}(\Omg))$
	\[
		\ell(\aa_t)=\int_0^L|\dot\alpha(s)|\dx s = \int_0^L|1-t\kp(s)|\dd s> \int_0^L(1-t\kp(s))\dd s=L-2\pi t.\qedhere
	\]
\end{proof}

In the remainder of this subsection we will discuss the properties of $S_t$ for a centrally symmetric domain $\Omg$. The central symmetry of $\Omg$ is inherited by $\p\Omg$. Consequently, if $y=\gamma(s)\in\p\Omega$, we know that $-y\in\p\Omega$ too;  moreover the centroid of $\p\Omg$ is the origin, so
\begin{equation}\label{eq:centroid-gam}
\int_0^L\gamma(s)\dx s=0\,.
\end{equation}
\begin{lemma}\label{lem:symmetry}
Let $\Omg\subset\dR^2$ be a bounded, simply-connected, centrally symmetric smooth domain. Then, for all $t\in \big(0,r_{\rm i}(\Omg)\big)$, the
inner parallel curve $S_t\subset\Omg$
is centrally symmetric.  
\end{lemma}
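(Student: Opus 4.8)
The plan is to show that the distance function $\rho$ defined in~\eqref{eq:distance} is invariant under the reflection $x\mapsto -x$, from which central symmetry of each level set $S_t = \{x\in\ov\Omg\colon\rho(x)=t\}$ follows immediately. First I would note that since $\Omg$ is centrally symmetric, so is its closure $\ov\Omg$ and its boundary $\p\Omg$; in particular, $x\in\ov\Omg$ if and only if $-x\in\ov\Omg$. Next, for any $x\in\Omg$, I would compute
\[
\rho(-x)=\inf_{y\in\p\Omg}|-x-y| = \inf_{y\in\p\Omg}|{-x}+y'|\quad\text{where }y'=-y,
\]
and observe that as $y$ ranges over $\p\Omg$, so does $y'=-y$ by central symmetry of $\p\Omg$; hence $\inf_{y\in\p\Omg}|-x-y| = \inf_{y'\in\p\Omg}|{-(x-y')}| = \inf_{y'\in\p\Omg}|x-y'| = \rho(x)$. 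This gives $\rho(-x)=\rho(x)$ for all $x\in\ov\Omg$.

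With this invariance in hand, fix $t\in(0,r_{\rm i}(\Omg))$ and take any $x\in S_t$, so $x\in\ov\Omg$ and $\rho(x)=t$. Then $-x\in\ov\Omg$ (central symmetry of $\ov\Omg$) and $\rho(-x)=\rho(x)=t$, so $-x\in S_t$. Thus $S_t$ coincides with its reflection $\{-x\colon x\in S_t\}$, which is the definition of central symmetry given in Subsection~\ref{ssec:notation}. One should also briefly remark that $r_{\rm i}(\Omg)$ itself is unchanged under reflection, since $r_{\rm i}(\Omg)=\max_{x\in\Omg}\rho(x)$ and $\rho$ is reflection-invariant, so the range of admissible $t$ is symmetric as well, though this is not strictly needed for the statement.

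There is essentially no serious obstacle here: the only point requiring a moment's care is the bookkeeping that the infimum defining $\rho$ is taken over $\p\Omg$ rather than over a fixed reference set, so one must explicitly invoke central symmetry of $\p\Omg$ (equivalently, of $\ov\Omg$) to re-index the infimum. Everything else is a one-line consequence of the triangle-inequality-free identity $|-x-y| = |x-(-y)|$. The same argument shows more generally that any geometric quantity built functorially from $\rho$ and $\ov\Omg$ inherits the central symmetry of $\Omg$.
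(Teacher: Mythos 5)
Your proof is correct and follows essentially the same approach as the paper: both arguments reduce to showing that the distance function $\rho$ is invariant under $x\mapsto -x$ via the central symmetry of $\p\Omg$. The only cosmetic difference is that you re-index the infimum directly, whereas the paper establishes $\rho(-x)=t$ by the two inequalities $\rho(-x)\le t$ and $\rho(-x)\ge t$ (the latter by contradiction); your formulation is, if anything, slightly cleaner.
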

\begin{proof}
Let $x\in S_t$ be fixed. Then
$\rho(x) = t$ and there exists a point $y\in\p\Omg$ such that $|x-y| = t$. 
Observe now that $\rho(-x) \le t$, because $-y\in\p\Omg$. In the case that $\rho(-x) < t$ there would exist a point $z\in\p\Omg$ such that $|z+x| < t$. Since $-z\in\p\Omg$, we would get that $\rho(x) < t$, leading to a contradiction. Thus, we infer that $\rho(-x) =t$ and hence $-x\in S_t$.
\end{proof}

\subsection{An auxiliary geometric inequality}\label{sec:agi}
\label{sec:geom_ineq}
The aim of this subsection is to provide a geometric inequality, which will be used in the proof of Theorem~\ref{th:curveconstr}. 
\begin{hyp}\label{hyp}
	Let $c_1,c_2\in\dR^2$ and $t > 0$ be fixed. Let a smooth simple non-closed curve $\Gamma\subset\dR^2$ be parametrized by the arc-length via the mapping $\gg\colon [s_1,s_2]\arr\dR^2$, $s_1 < s_2$.
	Assume that the following properties hold. 
	\begin{myenum} 
		\item $p_j:=\gamma(s_j) \in \p\cB_t(c_j)$ for $j=1,2$.
		\item $\gg'(s_j)$ is tangent to $\p\cB_t(c_j)$ in the counterclockwise direction for $j=1,2$.
		\item $\G$ can be extended up to a closed simple curve so that $\cB_t(c_1)\cup\cB_t(c_2)$ is surrounded by this extension.
	\end{myenum}	
\end{hyp}
\begin{prop}\label{prop:geometric_bound}
	Under Hypothesis~\ref{hyp} the following geometric inequality holds 
	\begin{equation}\label{eq:ineq}
		|\Gamma| \ge |c_1-c_2| +t\int_{s_1}^{s_2} \kp(s) \dd s,
	\end{equation}
	where $\kp$ is the curvature of $\G$ 
	defined as in~\eqref{eq:curvature}.
\end{prop}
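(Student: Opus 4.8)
The plan is to interpret the quantity $|c_1 - c_2| + t\int_{s_1}^{s_2}\kp(s)\dd s$ as the length of a comparison curve built from $\Gamma$ and to show that this comparison curve, being essentially a shortest path constrained to avoid the two disks, is no longer than $|\Gamma|$ itself. Concretely, I would first set up the ``turning angle'' description: since $\gamma$ is arc-length parametrized and smooth, write $\gamma'(s) = (\cos\theta(s), \sin\theta(s))$ where $\theta$ is the continuous angle function, and note $\theta'(s) = \kp(s)$ by the Frenet formula \eqref{eq:def-k} and \eqref{eq:curvature}. Thus $t\int_{s_1}^{s_2}\kp(s)\dd s = t(\theta(s_2) - \theta(s_1))$ is $t$ times the total turning of $\Gamma$. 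The tangency conditions (ii) pin down $\theta(s_1)$ and $\theta(s_2)$ in terms of the directions of $p_j - c_j$: if $p_j - c_j = t(\cos\phi_j, \sin\phi_j)$, then counterclockwise tangency to $\p\cB_t(c_j)$ forces $\theta(s_j) = \phi_j + \pi/2$ (with the appropriate sign convention), so the total turning of $\Gamma$ equals $\phi_2 - \phi_1$ modulo the winding, i.e., it records how the outward radial directions at the two contact points differ.

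Next I would construct the comparison configuration. Travel from $c_1$ along the segment to $c_2$; this has length $|c_1 - c_2|$. I want to compare the broken path $(\text{arc of }\p\cB_t(c_1)\text{ from }p_1) \cup \Gamma \cup (\text{arc of }\p\cB_t(c_2)\text{ to }p_2)$, suitably closed up, against the straight segment $[c_1,c_2]$ offset, or more precisely against a ``stadium-like'' curve. The cleanest route is probably this: consider the closed simple curve $\widehat\Gamma$ obtained by extending $\Gamma$ as in hypothesis (iii) so that it surrounds $\cB_t(c_1)\cup\cB_t(c_2)$; its interior contains the convex-hull-of-two-disks region, which is a stadium $\mathcal{S}$ of length $2|c_1-c_2| + 2\pi t$. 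A curve that surrounds a convex set has length at least the perimeter of that convex set — but I only want to charge $\Gamma$ (not all of $\widehat\Gamma$) against the part of $\mathcal{S}$ it ``faces.'' So I would instead argue locally: parametrize by the turning angle and compare $\Gamma$ with the piece of $\p\mathcal{S}$ having the same range of outward normal directions, using that among all curves joining $p_1$ to $p_2$ with prescribed endpoint tangents and staying outside $\cB_t(c_1)\cup\cB_t(c_2)$, the $\p\mathcal{S}$-arc-plus-the-chord realizes the infimum of length. Equivalently: decompose the right-hand side as $|c_1 - c_2| + t\int\kp = |c_1-c_2| + (\text{length of the two quarter/partial arcs of }\p\cB_t\text{ that }\Gamma\text{ replaces})$, and show $\Gamma$ plus these arcs bounds a region containing the stadium, so its length is at least the stadium perimeter, whence $|\Gamma| \ge (2|c_1-c_2| + 2\pi t) - (\text{arcs}) - |c_1-c_2| = |c_1-c_2| + t\int\kp$, after accounting that the stadium perimeter's two straight sides together give $2|c_1-c_2|$ and the two full semicircular caps give $2\pi t$.

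The main obstacle — and the reason this needs care rather than a one-line convexity argument — is that $\Gamma$ is only a single simple \emph{non-closed} arc, so I cannot directly apply ``a closed curve surrounding a convex body is at least as long as its boundary.'' I have to build the right closed comparison curve out of $\Gamma$ and circular arcs of $\p\cB_t(c_1)$, $\p\cB_t(c_2)$ glued at $p_1, p_2$ with matching tangents (the tangency hypothesis (ii) is exactly what makes this gluing $C^1$, hence a legitimate piecewise-smooth closed curve), verify via hypothesis (iii) that this glued curve still surrounds $\cB_t(c_1)\cup\cB_t(c_2)$ and hence the stadium $\mathcal{S}$, apply the isoperimetric-type monotonicity $\ell(\text{curve surrounding convex }K)\ge |\p K|$ to get a lower bound involving $|\Gamma|$ and the lengths of the glued arcs, and finally check that the arc lengths are precisely $\pi t$ each minus $t\int_{s_1}^{s_2}\kp(s)\dd s$ distributed correctly, so that everything telescopes to \eqref{eq:ineq}. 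A subtle point to handle is the possible non-convexity/self-proximity of $\Gamma$ making the total turning exceed what a single stadium arc would give; I expect the paper defers the fully elementary verification of this gluing-and-counting to the appendix (attributed to Ol\v{s}\'ak), so in the main text I would state the reduction to the stadium comparison and the turning-angle bookkeeping, and cite Appendix~\ref{app} for the elementary geometric details.
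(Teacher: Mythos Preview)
Your stadium-comparison heuristic is attractive but has a genuine gap, and it is \emph{not} the route the paper takes. The problem is the step ``check that the arc lengths are precisely $\pi t$ each minus $t\int_{s_1}^{s_2}\kp(s)\dd s$ distributed correctly, so that everything telescopes.'' For this bookkeeping even to make sense you need $\int_{s_1}^{s_2}\kp(s)\dd s\le 2\pi$ (otherwise the arc lengths you subtract become negative), and you need the glued object $\Gamma\cup(\text{arcs})\cup(\text{segment})$ to be a \emph{simple} closed curve so that ``a closed curve surrounding a convex body has length at least $|\partial K|$'' applies. Neither is guaranteed by Hypothesis~\ref{hyp}: a simple arc $\Gamma$ can spiral many times around one disk before reaching the other, making $\int_\Gamma\kp$ arbitrarily large and forcing any closed extension through circular arcs on $\p\cB_t(c_j)$ to self-intersect. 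Hypothesis~(iii) only asserts \emph{some} simple closed extension enclosing both disks; it does not let you choose that extension to be the stadium complement, nor does it bound the total curvature of $\Gamma$. You correctly flag this difficulty but then defer it to the appendix --- which does not patch your argument. The paper's remark after Proposition~\ref{prop:geometric_bound} sketches a version of your projection idea, but the missing ingredient there is Chillingworth's theorem (regular homotopy of direct curves), which gives $\int_\Gamma\kp=\int_\Sigma\kp$ for the projection $\Sigma$ onto the stadium; without that invariance the length comparison $|\Gamma|\ge|\Sigma|$ alone is useless.

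The actual elementary proof in Appendix~\ref{app} is quite different. After scaling to $t=1$ and normalizing positions, one drops vertical rays $L_j$ from $c_j$ through $p_j$ and splits $\Gamma=\Gamma_1\cup\Gamma_2\cup\Gamma_3$ at suitably chosen crossings of these rays, so that $|\Gamma_2|\ge|c_1-c_2|$ is immediate. The curvature term is handled via the identity of Proposition~\ref{prop:curvature_winding}, $\int_\Gamma\kp=w_\Gamma(\gamma(s_1))+w_\Gamma(\gamma(s_2))$, expressing total curvature as a sum of winding numbers about the endpoints. One then uses the elementary bound $w_{\Gamma_1}(c_1)\le|\Gamma_1|$ and $w_{\Gamma_3}(c_2)\le|\Gamma_3|$ (valid because $\Gamma$ stays outside the unit disks about $c_1,c_2$), together with auxiliary zero-total-curvature curves and a careful $\pm\pi$ accounting, to conclude $\int_\Gamma\kp\le|\Gamma_1|+|\Gamma_3|$. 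This winding-number machinery is exactly what handles unbounded spiraling of $\Gamma$, which your stadium comparison cannot.
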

\begin{remark}
A possible way to prove Proposition \ref{prop:geometric_bound} is to use an abstract result due to Chillingworth~\cite[Theorem 3.3]{C72}, which states that two closed homotopic curves are regularly homotopic if the curves are direct, that is, the corresponding curves in the covering space are simple. Let us sketch the proof idea. 
\medskip

The curve $\Gamma$ is homotopic to its projection $\Sigma$ on the convex hull of the two disks. After modifying $\Sigma$ suitably so we avoid nullhomotopic loops, we can extend $\Gamma$ and $\Sigma$ to closed direct curves $\tilde\Gamma$, $\tilde\Sigma$ using the same extension. By~\cite[Theorem 3.3]{C72}, $\tilde\Gamma$ and $\tilde\Sigma$ are regularly homotopic, so the integral over their curvatures agree: $\int_{\tilde\Gamma}\kappa=\int_{\tilde\Sigma}\kappa$, see for example~\cite{W37}. Since $\Gamma$ and $\Sigma$ were extended using the same extension, we also get $\int_{\Gamma}\kappa=\int_{\Sigma}\kappa$. Finally, one can check that $\Sigma$ satisfies \eqref{eq:ineq} and by $|\Gamma| \ge|\Sigma|$, we obtain \eqref{eq:ineq} for $\Gamma$.
\end{remark}
We provide in Appendix~\ref{app} a more elementary proof of Proposition~\ref{prop:geometric_bound}, which was suggested by Mirek Ol\v{s}\'{a}k.
 \section{Proof of Theorem~\ref{th:curveconstr} -- Covering inner parallel curves}\label{sec:proof}

The aim of this section is to prove the following theorem,  which yields Theorem~\ref{th:curveconstr}.

\begin{theorem}\label{thm:main}
	There exists a subset $\cL\subset[0,r_{\rm i}(\Omg))$ such that $(0,r_{\rm i}(\Omg))\sm\cL$ is of Lebesgue measure zero, and for any $t\in \cL$, there exists a
	piecewise smooth continuous
	mapping $\s_t\colon\dR/(L\dZ)\arr\dR^2$ such that
	\begin{myenum}
		\item $S_t \subset \s_t([0,L])$.
		\item $\ell(\s_t) \le L-2\pi t$.
		\item
		For centrally symmetric domains $\Omg$,  the curve  $\s_t([0,L])$ is centrally symmetric too.
	\end{myenum}
\end{theorem}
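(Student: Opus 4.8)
The plan is to build the curve $\sigma_t$ by starting from the naive closed curve $\alpha_t$ of Lemma~\ref{lem:alphat} and surgically removing the ``overshooting'' arcs where $\alpha_t$ fails to stay in $\overline\Omega$ or where it traces out regions not belonging to $S_t$, replacing each removed arc by a doubly-covered straight segment connecting the endpoints. Concretely, fix $t\in\cL$ and let $0\le a_1<b_1<\dots<a_m<b_m\le L$ be the parameters from Proposition~\ref{prop:parallelcurve}, so that $S_t$ is the disjoint union of the smooth arcs $\gamma([a_k,b_k])+t\nb$. On each complementary interval $(b_k,a_{k+1})$ (indices mod $m$), the point $\gamma(s)+t\nb(s)$ leaves the admissible region; I would replace the image $\alpha_t((b_k,a_{k+1}))$ by the segment from $p_k:=\gamma(b_k)+t\nb(b_k)$ to $q_k:=\gamma(a_{k+1})+t\nb(a_{k+1})$, traversed back and forth so that $\sigma_t$ remains a continuous closed (non-simple) piecewise-smooth curve. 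Property (i), $S_t\subset\sigma_t([0,L])$, is then immediate from Proposition~\ref{prop:parallelcurve}, and property (iii) follows because for centrally symmetric $\Omega$ the set $S_t$ is centrally symmetric by Lemma~\ref{lem:symmetry}, the parametrization can be chosen symmetrically, and the inserted segments come in antipodal pairs (or are themselves centrally symmetric).

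The heart of the matter is the length bound (ii). The contribution of the retained arcs is $\sum_k \int_{a_k}^{b_k}|1-t\kappa(s)|\,\dd s = \sum_k \int_{a_k}^{b_k}(1-t\kappa(s))\,\dd s$, where I use Lemma~\ref{lem:curv_bound} to drop the absolute value. The inserted doubly-covered segments contribute $2\sum_k |p_k-q_k|$. So I must show
\[
\sum_{k=1}^m\int_{a_k}^{b_k}(1-t\kappa(s))\,\dd s + 2\sum_{k=1}^m |p_k-q_k| \;\le\; L-2\pi t \;=\;\int_0^L(1-t\kappa(s))\,\dd s.
\]
Rearranging, this is equivalent to
\[
2\sum_{k=1}^m |p_k - q_k| \;\le\; \sum_{k=1}^m \int_{b_k}^{a_{k+1}}(1-t\kappa(s))\,\dd s,
\]
and it suffices to prove this termwise, i.e. $2|p_k-q_k|\le \int_{b_k}^{a_{k+1}}(1-t\kappa(s))\,\dd s$ for each gap. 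Wait --- a cleaner route: I would rather apply Proposition~\ref{prop:geometric_bound} directly to the arc $\Gamma_k:=\gamma([b_k,a_{k+1}])$ of the \emph{original boundary} $\partial\Omega$, with $c_1,c_2$ chosen appropriately. The right framework is to let the two ``capping'' disks have centres at the nearest-point projections associated to the endpoints; the endpoints $p_k$ of $S_t$ lie on circles of radius $t$ about points of $\partial\Omega$, and Hypothesis~\ref{hyp} is arranged so that $|\Gamma_k|\ge |c_1-c_2| + t\int_{b_k}^{a_{k+1}}\kappa(s)\,\dd s$, with $|\Gamma_k|=a_{k+1}-b_k$ and $|c_1-c_2|$ bounding $2|p_k-q_k|$ from below up to the geometry of the caps. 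So $\int_{b_k}^{a_{k+1}}(1-t\kappa)\,\dd s = |\Gamma_k| - t\int\kappa \ge |c_1-c_2| \ge 2|p_k-q_k|$ after checking the segment-versus-chord comparison.

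Thus the logical skeleton is: (1) define $\sigma_t$ by the surgery above; (2) verify (i) and (iii) by inspection; (3) for (ii), decompose $\ell(\sigma_t)$ into retained-arc length plus doubled segment lengths, use Lemma~\ref{lem:curv_bound} to linearize the retained part, and reduce to a termwise inequality over the gaps; (4) discharge each termwise inequality by applying Proposition~\ref{prop:geometric_bound} to the corresponding boundary arc with a suitable pair of capping disks of radius $t$, verifying Hypothesis~\ref{hyp} (the endpoints of $S_t$ realize the distance $t$ to $\partial\Omega$, giving the tangency and incidence conditions (i)--(ii), and condition (iii) holds because the disks sit inside $\Omega$ which is enclosed by $\partial\Omega$). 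The main obstacle I anticipate is step (4): making the choice of centres $c_1,c_2$ precise so that Hypothesis~\ref{hyp} genuinely holds --- in particular verifying the tangency condition and the ``surrounding'' condition --- and handling degenerate configurations (e.g.\ $m=0$ when $S_t$ is already empty, adjacent gaps, or endpoints where the projection is non-unique). There is also a bookkeeping subtlety in the centrally symmetric case: the indexing of arcs and gaps must be organized so that the antipodal map permutes them, which requires choosing the starting point of the arc-length parametrization with some care, but this is routine once the surgery is set up.
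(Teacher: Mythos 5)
Your overall strategy coincides with the paper's: parametrize $S_t$ by the arcs $\gamma([a_k,b_k])+t\nb$, bridge consecutive arcs by straight segments, linearize the retained part via Lemma~\ref{lem:curv_bound}, and control each bridge by applying Proposition~\ref{prop:geometric_bound} to the boundary arc $\gamma([b_k,a_{k+1}])$ with capping disks of radius $t$. However, your length accounting contains a genuine error. The connecting segment from $p_k=\gamma(b_k)+t\nb(b_k)$ to $q_k=\gamma(a_{k+1})+t\nb(a_{k+1})$ must be traversed only \emph{once}: it joins the terminal point of the $k$-th arc to the starting point of the $(k+1)$-st, so a single traversal already produces a continuous closed curve, whereas your ``back and forth'' traversal returns you to $p_k$ and does not connect to the next arc at all. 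More seriously, the doubled inequality $2\sum_k|p_k-q_k|\le\sum_k\int_{b_k}^{a_{k+1}}(1-t\kp(s))\,\dd s$ that your accounting forces on you is false. The correct application of Proposition~\ref{prop:geometric_bound} takes $c_1=p_k$ and $c_2=q_k$, i.e.\ the endpoints of $S_t$ themselves (not nearest-point projections onto $\partial\Omg$), so that $\partial\Omg$ is tangent to $\partial\cB_t(c_j)$ at $\gamma(b_k)$ and $\gamma(a_{k+1})$; the conclusion then reads $a_{k+1}-b_k\ge|p_k-q_k|+t\int_{b_k}^{a_{k+1}}\kp(s)\,\dd s$, that is, $|p_k-q_k|\le\int_{b_k}^{a_{k+1}}(1-t\kp(s))\,\dd s$ with no factor $2$ and with $|c_1-c_2|$ equal to (not a lower bound for twice) the segment length.

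This single-count bound is sharp: equality is approached when the gap arc consists of a common tangent segment of the two circles together with circular arcs. In a long thin-necked dumbbell both gap arcs are nearly such tangents, so $2|p_k-q_k|$ exceeds the corresponding gap integral for each $k$, the doubled inequality fails both termwise and in aggregate, and your doubly covered curve is genuinely longer than $L-2\pi t$. The ``double covering'' visible in the dumbbell picture arises differently: the two components of $S_t$ produce \emph{two} gap intervals, each contributing one traversal of (geometrically) the same segment, and each traversal is paid for by its own gap integral. Once the segments are single-covered and the disk centres are placed on $S_t$, the remainder of your outline --- properties (i) and (iii), the use of Lemma~\ref{lem:curv_bound}, the verification of Hypothesis~\ref{hyp}, and the symmetric pairing of arcs and segments in the centrally symmetric case --- matches the paper's proof.
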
 

In the following,  let  the set \(\cL\) be  as in  Proposition~\ref{prop:parallelcurve}.  Let $t\in\cL$ be fixed.  Then,  there exist $m\in\dN$ and
\[
	0\le a_1 < b_1 < a_2 < b_2 <\dots < a_m < b_m\le L 
\]
such that the inner parallel curve $S_t$ is given by
\[
	S_t = \bigcup_{k=1}^m \big\{\gamma(s) +t {\bf n}(s)\colon s\in[a_k,b_k]\big\}.
\]
Without loss of generality, we can always reparametrize the boundary of $\Omg$ so that $a_1 = 0$. In the following we assume that such a re-parametrization is performed and that $a_1 = 0$. Moreover, for the sake of convenience we also set $a_{m+1} := L$ and $b_{m+1} := b_1$.

The inner parallel curve $S_t$, $t\in\cL$, is not necessarily connected and, in general, it consists of finitely many piecewise-smooth, simple, closed curves.  Note that, while \(S_t\) consists of finitely many piecewise smooth simple closed curves,  the pieces \(C_k:=\{\gamma(s)+t\nb(s)\colon s\in[a_k,b_k]\}\) are not necessarily   closed curves. Our aim is to construct a piecewise smooth, closed, not necessarily simple curve, which contains $S_t$ and whose length is not larger than $L-2\pi t$. The idea is to connect the terminal point of $\{\gg(s) + t\nb(s)\colon s\in[a_k,b_k]\}$ with the starting point of $\{\gg(s) + t\nb(s)\colon s\in[a_{k+1},b_{k+1}]\}$ for all $k\in\{1,2,\dots,m\}$. 

Using the computation in the proof of Lemma~\ref{lem:alphat},  and that   \(\kappa(s)\leq \frac1t\) for all \(s\in[a_k,b_k]\) by  Lemma~\ref{lem:curv_bound},  we get the following expression for the length of $S_t$,
\begin{equation}\label{eq:expr1}
	|S_t| =\sum_{k=1}^m\int_{a_k}^{b_k}
	|1-t\kp(s)|\dd s = 
	\sum_{k=1}^m\int_{a_k}^{b_k}
	(1-t\kp(s))\dd s. 
\end{equation}
Let us define the points 
\[
	\frp_k = \gamma(a_k) +t\nb(a_k),\qquad \frq_k := \gamma(b_k) + t\nb (b_k),\qquad 
	\text{for}\,\,\, k\in\{1,2,\dots,m+1\},
\]	
and the line segments connecting them
\begin{equation}\label{key}
	\cI_k := \{(1-s) \frq_k + s \frp_{k+1}\colon s\in[0,1]\},\qquad k\in\{1,2,\dots,m\}.
\end{equation}
In the case that $b_m = L$,  the line segment $\cI_m$ reduces to a single point.  Note that, even when
\(b_k<a_{k+1}\),  the line segment \(\cI_k\) could reduce to a single point. 
The piecewise smooth continuous mapping $\s_t\colon\dR/(L\dZ)\arr\dR^2$ defined by
\[
	\s_t(s) =\begin{cases}
	\gamma(s) + t\nb(s),& s\in[a_k,b_k]\qquad\quad\text{for}\, k\in\{1,\dots,m\},\\
	\frac{a_{k+1}-s}{a_{k+1}-b_k}\frq_k + \frac{s-b_k}{a_{k+1}-b_k}\frp_{k+1} ,& s\in [b_k,a_{k+1}]\qquad\text{for}\, k\in\{1,\dots,m\},
\end{cases}
\]
parametrizes a closed, not necessarily simple curve in $\dR^2$. The property (i) in the formulation of Theorem~\ref{thm:main} follows from Proposition~\ref{prop:parallelcurve}
and the construction of the mapping $\s_t$.

Note that the curve $\gg([b_k,a_{k+1}])$, $k\in\{1,\dots,m\}$, satisfies Hypothesis~\ref{hyp} with $c_1 = \frq_k$ and $c_2 = \frp_{k+1}$. Thus, it follows from Proposition~\ref{prop:geometric_bound}
and since $\gamma$ is parametrized by arc-length that
for any $k\in\{1,2,\dots,m\}$
\begin{equation}\label{eq:bnd1}	
    |\cI_k| \le |\gamma([b_k,a_{k+1}])| - t\int_{b_k}^{a_{k+1}}\kp(s)\dd s\\
	 = 
	a_{k+1}-b_k - t\int_{b_k}^{a_{k+1}}\kp(s)\dd s.
\end{equation} 
Combining the formula~\eqref{eq:expr1} for the length of $S_t$ with the upper bounds on the lengths of line segments $\{\cI_k\}_{k=1}^m$ we get
\[
\begin{aligned}
	\ell(\s_t) &= |S_t|+\sum_{k=1}^m|\cI_k|
	\le
	\sum_{k=1}^m\int_{a_k}^{b_k}
	(1-t\kp(s))\dd s+ \sum_{k=1}^m\int_{b_k}^{a_{k+1}}(1-t\kp(s))\dd s\\
	 &\le L -t\int_0^L\kp(s) \dd s = L -2\pi t,
\end{aligned}
\]
where we used the total curvature identity~\eqref{eq:tot-curv} in the last step. Hence, we get the property (ii) in the formulation of Theorem~\ref{thm:main}.

Finally,   if \(\Omg\) is centrally symmetric,  then by Lemma~\ref{lem:symmetry} so is \(S_t\).
Consequently,  to every piece
\(C_k\) of \(S_t\)  joining \(\frp_k\) to \(\frq_k\),  there corresponds a curve \(C_{k^*}\) which is the symmetric of \(C_k\) about the origin.  This forces the number \(m\) of the curves \(C_k\) to be even, unless it is equal to one,  and therefore we get that  the   corresponding joining segments  \((\cI_k)_{1\leq k\leq m}\)
 constitute a centrally symmetric set.  This proves that the image of \(\s_t\) is centrally symmetric, thereby establishing (iii) in the formulation of Theorem~\ref{thm:main}. This proves Theorem~\ref{th:curveconstr}.

\section{Proof of Theorem~\ref{th:isomom} -- An isoperimetric inequality for moments of inertia}\label{sec:proofmomin}
Let $t\in\cL$ and the mapping $\s_t$ be as constructed in the proof of Theorem~\ref{thm:main},  which defines a closed curve \(\Sigma_t\). Since the moment of inertia of a curve about a point $p$ is minimal when $p$ is the centroid of the curve, it suffices to prove \eqref{eq:isomom} with $c(t)$ the centroid of $\Sigma_t$.   Let us introduce the notation  $L_t := \ell(\s_t)$ and re-parametrize the curve \(\Sigma_t\)
by the arc-length via the mapping $\wt\s_t\colon\dR/(L_t\dZ)\arr\dR^2$.   Clearly, we have $\wt\s_t\in H^1(\dR/(L_t\dZ))$ thanks to the regularity of $\s_t$. Furthermore,   by centering the coordinates at   the centroid $c(t)$ of $\Sigma_t$, we can assume that $c(t)=0$, and consequently
 	\begin{equation}\label{eq:orth}
		\int_0^{L_t}\wt\s_t(s)\dd s  = 0.
	\end{equation}
	Using the inclusion $S_t\subset\wt \Sigma_t$ and
	and applying the Wirtinger inequality~\cite[\S 7.7]{HLP},
	\begin{equation}\label{eq:Wirtinger}\int_{0}^{L_t}|\wt\s_t(s)|^2\dd s
		\\
		\le 
		\frac{|L_t|^2}{4\pi^2}
		\int_{0}^{L_t}|\wt\s_t'(s)|^2\dd s ,\end{equation}
	 we get
	\begin{equation}\label{eq:proof-isomom}
		\int_{S_t}|x|^2\dd \cH^1(x) \le 
		\int_{0}^{L_t}|\wt\s_t(s)|^2\dd s
		\\
		\le 
		\frac{|L_t|^3}{4\pi^2} \le \frac{(L-2\pi t)^3}{4\pi^2},		
	\end{equation}
	where we employed that $L_t \le L-2\pi t$ in the last step. Therefore, \eqref{eq:isomom} is proved.

Assuming that there is equality in \eqref{eq:isomom},  then we get from \eqref{eq:proof-isomom} that $L_t=L-2\pi t$ and there is equality is \eqref{eq:Wirtinger}. Under the conditions \eqref{eq:orth} and $|\tilde\sigma_t'(s)|=1$, equality happens in \eqref{eq:Wirtinger} if and only if $\tilde\sigma_t(s)=\frac{L_t}{2\pi}\ee^{\pm\ii 2\pi (s-s_0)/L_t}$ for some $s_0\in\R$ and $\Sigma_t$ is a circle. Moreover,   knowing that $L_t=L-2\pi t$ and $S_t=\Sigma_t$,
we get that $S_t$ is the circle of center $0$ and radius $\frac{L}{2\pi}- t$, and consequently,  the domain $\Omega$ with perimeter $L$  contains the disk $\cB$ of radius $L/2\pi$, hence $\Omg=\cB$  is a disk, thanks to the geometric isoperimetric inequality.

Finally, if $\Omg$ is a disk, then $S_t$ is a circle of radius $
\frac{L}{2\pi}- t$ and  equality in \eqref{eq:isomom} occurs.

\section{Proof of Theorem \ref{th:pthmoments} -- An isoperimetric inequality for $p$-th moments}\label{sec:proofpmom}

In this subsection, we study $p$-th moments of inner parallel curves and prove Theorem~\ref{th:pthmoments}. 
We show Theorem~\ref{th:pthmoments} (i) (extension of Theorem~\ref{th:isomom} to $p$-th moments, for $0\leq p\leq 2$) in Corollary \ref{co:iso-p-m} with the help of Jensen's inequality. Proposition \ref{prop:p-moment} yields Theorem~\ref{th:pthmoments} (ii) and (iii).
\begin{corollary}\label{co:iso-p-m}
Let $p\in[0,2]$ and suppose that $\Omega \subset \mathbb{R}^{2}$ is a smooth, bounded and simply connected domain. Then,  for almost every $t\ge0$
\begin{align}\label{eq:iso-p-m}
\int_{S_{t}(\Omg)}|x-c(t)|^{p} \dd\mathcal{H}^{1}(x) \leq \int_{S_{t}(\cB)}|x|^{p} \dd\mathcal{H}^{1}(x),
\end{align}
where $c(t)\in\dR^2$ is the centroid of $S_t(\Omg)$ and
where $\cB$ is the disk centered at the origin and with the same perimeter as $\Omg$.
Here $S_{t}(\cdot)$ is defined in \eqref{eq:stdef},  and $\mathcal{H}^{1}$ is the one-dimensional Hausdorff measure. For $p\not=0$ and $t\in[0,\frac{|\p\Omg|}{2\pi})$, the equality in \eqref{eq:iso-p-m}
is attained if and only if $\Omega$ is a disk 
\end{corollary}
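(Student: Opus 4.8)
The plan is to reduce the $p$-th moment inequality for $p \in [0,2]$ to the case $p=2$ already established in Theorem~\ref{th:isomom}, using Jensen's inequality applied with the concave function $r \mapsto r^{p/2}$. First I would fix $t \in \cL$ and, as in Section~\ref{sec:proofmomin}, pass to the closed curve $\Sigma_t$ from Theorem~\ref{thm:main} with $S_t \subset \Sigma_t$ and $\ell(\sigma_t) = L_t \le L - 2\pi t$, re-parametrised by arc length as $\wt\s_t \colon \dR/(L_t\dZ) \arr \dR^2$ and centred so that its centroid is $0$; since the centroid minimises every $p$-th moment (by convexity of $x \mapsto |x-p|^p$ for $p \ge 1$, and for $p \in (0,1)$ one still has that the centroid is the natural comparison point fixed by the statement), it suffices to bound $\int_{S_t}|x|^p \dd\cH^1 \le \int_0^{L_t} |\wt\s_t(s)|^p \dd s$ and then estimate the latter. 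For $p \in [0,2]$, apply Jensen's inequality to the probability measure $\tfrac{1}{L_t}\dd s$ on $[0,L_t]$ with the concave map $u \mapsto u^{p/2}$:
\[
\frac{1}{L_t}\int_0^{L_t} |\wt\s_t(s)|^p \dd s = \frac{1}{L_t}\int_0^{L_t} \big(|\wt\s_t(s)|^2\big)^{p/2} \dd s \le \left(\frac{1}{L_t}\int_0^{L_t} |\wt\s_t(s)|^2 \dd s\right)^{p/2}.
\]
By the Wirtinger estimate~\eqref{eq:Wirtinger} and $L_t \le L - 2\pi t$, the right-hand side is at most $\big((L_t/2\pi)^2\big)^{p/2} = (L_t/2\pi)^p \le \big((L-2\pi t)/2\pi\big)^p$, which multiplied by $L_t \le L-2\pi t$ gives $\int_0^{L_t}|\wt\s_t(s)|^p \dd s \le (L-2\pi t)^{p+1}/(2\pi)^p$; this last quantity is exactly $\int_{S_t(\cB)}|x|^p \dd\cH^1$, since $S_t(\cB)$ is a circle of radius $(L-2\pi t)/2\pi = r_i(\cB) - t$ and perimeter $L - 2\pi t$ (when nonempty; when $S_t(\cB) = \varnothing$ the right-hand side is to be read as $0$ and $t \ge L/2\pi$ forces $|\wt\s_t| $-estimates to collapse as well). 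This proves~\eqref{eq:iso-p-m}.

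For the equality case with $p \ne 0$ and $t \in [0, L/2\pi)$, I would trace back through the three inequalities used. Equality forces: (a) $L_t = L - 2\pi t$; (b) equality in Wirtinger~\eqref{eq:Wirtinger}, hence $\wt\s_t(s) = \tfrac{L_t}{2\pi}\ee^{\pm\ii 2\pi(s-s_0)/L_t}$ and $\Sigma_t$ is a circle of radius $L_t/2\pi$; and (c) equality in Jensen, which — since $u \mapsto u^{p/2}$ is strictly concave for $p \in (0,2)$ — forces $|\wt\s_t(s)|^2$ to be constant a.e.; for $p = 2$ equality in Jensen is automatic and one uses instead the argument already in Section~\ref{sec:proofmomin}. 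In all cases $\Sigma_t$ is a circle centred at $0$ of radius $L_t/2\pi = (L-2\pi t)/2\pi$, and since $S_t = \Sigma_t$ (the inclusion $S_t \subset \Sigma_t$ together with $|S_t| \ge |\Sigma_t| - \sum_k |\cI_k|$ and the length bookkeeping forces all joining segments to degenerate and $S_t = \Sigma_t$), the curve $S_t$ is the circle of centre $0$ and radius $L/2\pi - t$. Therefore $\Omega$ contains the disk of radius $L/2\pi$, and by the classical isoperimetric inequality $\Omega$ with perimeter $L$ must be that disk. Conversely, if $\Omega = \cB$ then $S_t(\Omega)$ is a circle of radius $L/2\pi - t$ and $c(t) = 0$, so~\eqref{eq:iso-p-m} holds with equality.

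The main obstacle I anticipate is the bookkeeping in the equality case rather than the main inequality: one must argue carefully that equality propagates all the way from the integral over $S_t$ up to $\Sigma_t$ being a genuine circle traversed once — in particular that the passage $\int_{S_t}|x|^p \le \int_0^{L_t}|\wt\s_t|^p$ is an equality only if $\Sigma_t$ is simple and coincides with $S_t$, which requires re-examining the construction of $\s_t$ in Section~\ref{sec:proof} to see that all the connecting segments $\cI_k$ (and the doubly-covered portions) must vanish. A secondary, minor point is to handle $p \in (0,1)$, where $x \mapsto |x-p|^p$ is not convex so the reduction "centroid minimises the $p$-th moment" is not literally available; but this is harmless because the statement fixes $c(t)$ to be the centroid of $S_t(\Omega)$ from the outset, so one simply works with that point throughout and never needs the minimisation property — the Jensen step above only uses concavity of $u \mapsto u^{p/2}$ on $[0,\infty)$, which holds for all $p \in [0,2]$.
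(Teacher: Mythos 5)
Your overall strategy -- reduce $p\in[0,2]$ to the $p=2$ case via Jensen's inequality with the concave map $u\mapsto u^{p/2}$ -- is exactly the paper's strategy, and the Jensen--Wirtinger computation itself is fine. However, there is a genuine gap in the order in which you carry it out, concentrated in the step where you recentre the coordinates at the centroid of $\Sigma_t$. The quantity to be bounded is $\int_{S_t}|x-c(t)|^p\,\dd\mathcal{H}^{1}(x)$ with $c(t)$ the centroid of $S_t(\Omega)$, whereas the Wirtinger inequality \eqref{eq:Wirtinger} requires the mean-zero condition \eqref{eq:orth}, i.e.\ it is the centroid of $\Sigma_t$ that must sit at the origin; these two points differ in general, since $\Sigma_t$ contains the joining segments $\cI_k$ in addition to $S_t$. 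To bridge them you assert that ``the centroid minimises every $p$-th moment by convexity of $x\mapsto|x-y|^p$ for $p\ge1$''. That is false for $p\neq 2$: convexity gives existence of a minimiser of $y\mapsto\int_{S_t}|x-y|^p\,\dd\mathcal{H}^{1}(x)$, but that minimiser is the centroid only for $p=2$ (for $p=1$ it is the geometric median, not the centroid). Your proposed workaround for $p\in(0,1)$ -- ``work with $c(t)$ throughout'' -- does not repair this, because then $\int_0^{L_t}\bigl(\wt\s_t(s)-c(t)\bigr)\dd s\neq 0$ and the Wirtinger step is no longer applicable. So as written the chain of inequalities does not close for any $p\in(0,2)$, not just $p\in(0,1)$.

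The fix is simply to reorder the two reductions, which is what the paper does: first apply Jensen on $S_t$ itself, with the base point $c(t)$ held fixed and the probability measure $\frac{1}{|S_t|}\dd\mathcal{H}^{1}$, to get
\begin{equation*}
\Bigl(\int_{S_{t}}|x-c(t)|^{p}\,\dd\mathcal{H}^{1}(x)\Bigr)^{2/p}\leq |S_t|^{\frac2p-1}\int_{S_t}|x-c(t)|^{2}\,\dd\mathcal{H}^{1}(x),
\end{equation*}
and only then invoke Theorem~\ref{th:isomom} as a black box together with $|S_t|\le L-2\pi t$. The passage from $c(t)$ to the centroid of $\Sigma_t$ then occurs only inside the proof of Theorem~\ref{th:isomom}, where it concerns the \emph{second} moment and the centroid-minimisation property is genuinely valid. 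This also makes the equality discussion immediate (strictness of Theorem~\ref{th:isomom} for non-disks propagates through Jensen), so the elaborate bookkeeping you anticipate -- tracing equality in Wirtinger, in Jensen, and in the degeneration of the segments $\cI_k$ -- is not needed.
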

\begin{proof}
For $p=0$, \eqref{eq:iso-p-m} reduces to the well known bound $|S_t|\leq L-2\pi t$, see \eqref{eq:sthartest}.

Let us take $p\in(0,2]$. 
Since $2/p\geq 1$, we write by Jensen's inequality,
\[\left(\int_{S_{t}(\Omg)}|x-c(t)|^{p} \dd\mathcal{H}^{1}(x)\right)^{2/p}\leq |S_t(\Omg)|^{\frac2p-1}\int_{S_t(\Omg)}|x-c(t)|^{2} \dd\mathcal{H}^{1}(x).\]
To finish the proof, we use that $|S_t|\leq L-2\pi t$, apply Theorem~\ref{th:isomom},  and note that 
\[\int_{S_t(\cB)}|x|^p \dd\mathcal{H}^{1}(x)=2\pi\left(\frac{L}{2\pi}-t \right)^{1+p}. \]
Since the inequality in Theorem~\ref{th:isomom} is strict for all $t\in [0,\frac{|\p\Omg|}{2\pi})$ when $\Omg$ is not a disk, this also holds for \eqref{eq:iso-p-m}. 
\end{proof}
Secondly, we formulate the corresponding variational problem to Theorem~\ref{th:pthmoments}.
\begin{definition}
Given $p>0$, we define
\begin{align*}
 C_{p}:=\sup _{\Gamma} \frac{\displaystyle\int_{\Gamma}|x|^{p} \dd \mathcal{H}^{1}(x)}{\displaystyle\int_{\partial \cB}|x|^{p} \dd \mathcal{H}^{1}(x)}=\sup _{\Gamma} \frac{(2 \pi)^{p}}{|\Gamma|^{p+1}} \int_{\Gamma}|x|^{p} \dd \mathcal{H}^{1}(x) \text {, }
\end{align*}
where the supremum is taken over all centrally symmetric, closed Lipschitz curves $\Gamma$ and $\cB$ is a disk centred at the origin with $|\partial \cB|=|\Gamma|$. 
\end{definition}

\begin{remark}
By scaling, we find that $C_{p} \in(0, \infty)$. By testing with $\Gamma= \partial\cB$, we find $C_{p}\ge1$ for all $p$. We have already shown that $C_{p}=1$ for all $p \in(0,2]$, see the end of the proof of Theorem~\ref{th:isomom} (or alternatively the result by Hurwitz \cite{Hu}) combined with Jensen's inequality as in the proof of Corollary \ref{co:iso-p-m}. In fact, using Jensen's inequality as in the proof of Corollary~\ref{co:iso-p-m}, one can show that $C_{p}$ is non-decreasing in $p$. 
\end{remark}

The next proposition shows that $C_{p}$ is not constant.
\begin{proposition}
    We have $C_{p}>1$ for all $p$ large enough.
\end{proposition}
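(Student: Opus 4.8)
The plan is to exhibit an explicit family of centrally symmetric closed Lipschitz curves for which the normalised $p$-th moment exceeds $1$ once $p$ is sufficiently large. Since $C_p$ is scale-invariant, I am free to fix the length or fix a convenient parametrisation. The natural candidate is a "stadium-like" or "dumbbell-like" curve that concentrates most of its arclength far from the origin: intuitively, for large $p$ the functional $\frac{(2\pi)^p}{|\Gamma|^{p+1}}\int_\Gamma |x|^p\dd\mathcal H^1$ rewards curves whose mass sits at a large radius $R$, roughly like $R^p / (\text{length})^{p+1}$ up to constants, so a long thin curve wrapped around a circle of radius $R$ with total length only slightly more than $2\pi R$ will beat the disk. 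Concretely I would take $\Gamma_L$ to be, say, two parallel segments of length $L$ at heights $\pm 1$ joined by two small semicircles of radius $1$ (a stadium boundary), which is centrally symmetric, closed and Lipschitz, and compute both $|\Gamma_L| = 2L + 2\pi$ and $\int_{\Gamma_L}|x|^p\dd\mathcal H^1$.

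The key computation: on the two long segments $|x|^p = (s^2+1)^{p/2}$ with $s$ ranging over $[-L/2,L/2]$, so $\int_{\Gamma_L}|x|^p\dd\mathcal H^1 \ge 2\int_{-L/2}^{L/2}(s^2+1)^{p/2}\dd s \ge 2\int_{-L/2}^{L/2}|s|^p\dd s = \frac{4}{p+1}(L/2)^{p+1}$. Hence
\[
\frac{(2\pi)^p}{|\Gamma_L|^{p+1}}\int_{\Gamma_L}|x|^p\dd\mathcal H^1 \ge \frac{(2\pi)^p}{(2L+2\pi)^{p+1}}\cdot\frac{4}{p+1}\Big(\frac{L}{2}\Big)^{p+1} = \frac{(2\pi)^p}{p+1}\cdot\frac{4}{2^{p+1}}\cdot\frac{L^{p+1}}{(2L+2\pi)^{p+1}}.
\]
Letting $L\to\infty$ with $p$ fixed, $\frac{L^{p+1}}{(2L+2\pi)^{p+1}}\to 2^{-(p+1)}$, so the right-hand side tends to $\frac{(2\pi)^p}{p+1}\cdot\frac{4}{2^{p+1}}\cdot\frac{1}{2^{p+1}} = \frac{1}{p+1}\cdot\frac{\pi^p}{2^{p-1}}\cdot\frac{1}{1}$ — more cleanly, it tends to $\frac{(2\pi)^p}{(p+1)\,4^{p}}\cdot 2^{-(p-1)}\cdot 4$; tracking the constant, the limit is $\frac{2\pi^p}{(p+1)\,2^{p-1}}\cdot\frac14\cdot\ldots$. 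The point is only that the limit equals $c\,\big(\tfrac{\pi}{2}\big)^p/(p+1)$ for an absolute constant $c>0$, and since $(\pi/2)^p$ grows exponentially while $p+1$ grows linearly, this limit exceeds $1$ for all $p$ large enough. Therefore for such $p$ one can pick $L$ large enough that $\Gamma_L$ already violates the bound, giving $C_p>1$.

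The order of steps: (1) fix the stadium family $\Gamma_L$, verify it is centrally symmetric, closed and Lipschitz; (2) compute $|\Gamma_L|$ exactly and lower-bound $\int_{\Gamma_L}|x|^p$ by the segment contribution, bounding below by $\int |s|^p$; (3) form the ratio, take $L\to\infty$ to get a clean limit of the shape $\mathrm{const}\cdot(\pi/2)^p/(p+1)$; (4) observe this limit diverges as $p\to\infty$, hence exceeds $1$ for $p$ large, and conclude by choosing a single large $L$ for each such $p$. The main obstacle is essentially bookkeeping of constants in step (3) to make sure the exponential base that appears is genuinely larger than $1$ (here $\pi/2>1$, which is what makes the argument work); no delicate analysis is needed beyond that, and one should double-check that the small semicircle caps contribute positively and thus only help the lower bound. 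One could alternatively phrase the example as a thin rectangle or as the boundary of a long thin ellipse, but the stadium keeps the Lipschitz verification and the arclength computation completely elementary.
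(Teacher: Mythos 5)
Your proposal is correct and is essentially the paper's own argument: the paper tests with the doubly covered segment $\left[-\tfrac14,\tfrac14\right]\times\{0\}$ of length $1$ --- precisely the degenerate limit of your rescaled stadiums as $L\to\infty$ --- and reaches the same exponential base $\tfrac{\pi}{2}>1$ by computing $\lim_{p\to\infty}\bigl(\int_\Gamma|x|^p\,\mathrm{d}\mathcal{H}^1(x)\bigr)^{1/p}=\sup_{x\in\Gamma}|x|=\tfrac14$ rather than via your explicit $\int|s|^p$ lower bound. Your version replaces the single degenerate test curve by a two-parameter limit (first $L\to\infty$ for fixed large $p$), which costs some constant bookkeeping (your intermediate constants are garbled, though the final form $c(\pi/2)^p/(p+1)$ is right) but is equally valid, since the supremum defining $C_p$ ranges over closed Lipschitz curves that need not be simple, so both test objects are admissible.
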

\begin{proof}
Consider the curve $\Gamma$ parametrised by $\gamma\colon[0,1] \longrightarrow \mathbb{R}^{2}$, $\gamma(s)=\left(\gamma_{1}(s), \gamma_{2}(s)\right)^{\top}$ with $\gamma_{2}(s)=0$ for all $s\in[0,1]$ and
\begin{align*}
\gamma_{1}(s)=\left\{\begin{array}{lll}
-\frac{1}{4}+s & \text { for } & s \in\bigl[0, \tfrac{1}{2}\bigr]\medskip \\
\frac{3}{4}-s & \text { for } & s \in\bigl[\tfrac{1}{2}, 1\bigr]
\end{array}\right.
\end{align*}
$\Gamma$ is a closed curve that is parametrised by arc-length, centrally symmetric, its shape is the doubly covered interval $\left[-\frac{1}{4}, \frac{1}{4}\right] \times\{0\}$, and its length is $|\Gamma|=1$.
We have
\begin{align*}
 \lim _{p \rightarrow \infty}\left(\int_{\Gamma}|x|^{p} \dd \mathcal{H}^{1}(x)\right)^{1 / p}=\lim _{p \rightarrow \infty}\left(\int_{0}^{1}\left|\gamma_{1}(s)\right|^{p} \dd s\right)^{1 / p} 
 =\sup _{s \in[0,1]}\left|\gamma_{1}(s)\right|=\frac{1}{4},
\end{align*}
and therefore, by $|\Gamma|=1$,
\begin{align*}
\lim _{p \rightarrow \infty}\left(\frac{(2 \pi)^{p}}{|\Gamma|^{p+1}} \int_{\Gamma}|x|^{p} \dd \mathcal{H}^{1}(x) \right)^{1 / p}=\frac{2 \pi}{4}>1 \text {. }
\end{align*}
This proves $C_{p}>1$ for all $p$ large enough.
\end{proof}

This leads to the question of determining the critical value
\begin{align*}
p_{*}:=\sup \left\{p \mid p>0 \text { and } C_{p}=1\right\}
\end{align*}
The following proposition shows that $p_*\le3$, see (ii), which we conjecture to be optimal (see Conjecture \ref{con:p3}) since the disk is a local optimiser, see (i) below.
\begin{proposition}\label{prop:p-moment}
    \begin{enumerate}
        \item[(i)] Let $p<3$. Then the disk is a local optimiser among centrally symmetric curves in the following sense: If $r: \mathbb{R} / 2 \pi \mathbb{Z} \rightarrow \mathbb{R}$ is continuous with $r(\theta)=r(\theta+\pi)$ for all $\theta$, and the curve $\Gamma_{\varepsilon}$ is parametrised by $\gamma_{\varepsilon}: \mathbb{R} / 2 \pi \mathbb{Z}  \rightarrow \mathbb{R}^{2}$ with
\begin{align}\label{eq:Reps}
\gamma_{\varepsilon}(\theta)=\left(\begin{array}{l}
R_{\varepsilon}(\theta) \cos (\theta) \\
R_{\varepsilon}(\theta) \sin (\theta)
\end{array}\right) \text {, where } R_{\varepsilon}(\theta)=1+\varepsilon r(\theta) \text {, }
\end{align}
then
\begin{align*}
\frac{(2 \pi)^{p}}{\left|\Gamma_{\varepsilon}\right|^{p+1}} \int_{\Gamma_{\varepsilon}}|x|^{p} \dd \mathcal{H}^{1}(x) \leq 1
\end{align*}
for all $\varepsilon>0$ small enough; furthermore, the inequality is strict if $r$ is non-constant.
\item[(ii)] 
Let $p>3$. Then the disk is not optimal, not even locally: There exists a  sequence of nearly circular, centrally symmetric closed Lipschitz curves $\left(\Gamma_n\right)_{n\in\N}$ converging uniformly to the boundary of the disk for which
\begin{align*}
\frac{(2 \pi)^{p}}{\left|\Gamma_{n}\right|^{p+1}} \int_{\Gamma_{n}}|x|^{p} \dd \mathcal{H}^{1}(x)>1.
\end{align*}
    \end{enumerate}
\end{proposition}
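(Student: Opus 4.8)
The plan is to reduce both parts to a one–variable asymptotic expansion in the parameter $\varepsilon$ of the functional
\[
J_p(\Gamma_\varepsilon):=\frac{(2\pi)^p}{|\Gamma_\varepsilon|^{p+1}}\int_{\Gamma_\varepsilon}|x|^p\dd\mathcal H^1(x),
\]
following the Fuglede-type strategy mentioned after the statement. Writing $R_\varepsilon(\theta)=1+\varepsilon r(\theta)$ and using the polar arc-length element $\dd\mathcal H^1=\sqrt{R_\varepsilon^2+(R_\varepsilon')^2}\,\dd\theta=\sqrt{(1+\varepsilon r)^2+\varepsilon^2 (r')^2}\,\dd\theta$, I would expand $|\Gamma_\varepsilon|=\int_0^{2\pi}\sqrt{(1+\varepsilon r)^2+\varepsilon^2(r')^2}\,\dd\theta$ and $\int_{\Gamma_\varepsilon}|x|^p\dd\mathcal H^1=\int_0^{2\pi}(1+\varepsilon r)^p\sqrt{(1+\varepsilon r)^2+\varepsilon^2(r')^2}\,\dd\theta$ to second order in $\varepsilon$. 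It is convenient to normalise $r$ by subtracting its mean, i.e. to first absorb $\int_0^{2\pi}r\,\dd\theta$ into a rescaling so that one may assume $\int_0^{2\pi}r\,\dd\theta=0$; this does not change $J_p$ because $J_p$ is scale-invariant, and it simplifies the first-order terms. The key quantities that survive are the Fourier energies $\sum_k |\hat r_k|^2$ and $\sum_k k^2|\hat r_k|^2$ of $r$, and central symmetry forces $\hat r_k=0$ for odd $k$, so the lowest surviving mode is $k=2$.

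Carrying out the expansion, the $O(\varepsilon)$ term vanishes (the disk is a critical point, using $\int r=0$), and one is left with a quadratic form in $\varepsilon$: there is a constant $A_p$ with
\[
J_p(\Gamma_\varepsilon)=1+\varepsilon^2\sum_{k\ \mathrm{even},\ k\neq 0}\big(a_p+b_p\,k^2\big)|\hat r_k|^2+o(\varepsilon^2),
\]
where $a_p,b_p$ are explicit polynomials in $p$ coming from differentiating $(1+\varepsilon r)^p$ and the square-root, together with the normalising factor $|\Gamma_\varepsilon|^{-(p+1)}$. The sign of the bracket $a_p+b_p k^2$ for the worst (smallest) relevant mode $k=2$ is what decides everything: one computes $a_p+4b_p$ and checks that it is a multiple of $(3-p)$ up to a positive factor, so that it is negative for $p<3$ and positive for $p>3$, vanishing exactly at $p=3$. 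For $p<3$ every coefficient $a_p+b_p k^2$ with $k\geq 2$ is then $\leq a_p+4b_p<0$ (here one uses that $b_p>0$, so larger $k$ only makes the coefficient more negative), giving $J_p(\Gamma_\varepsilon)\leq 1$ for small $\varepsilon$ with strict inequality whenever some $\hat r_k\neq 0$, i.e. whenever $r$ is non-constant; this proves (i). For (ii) with $p>3$ one simply takes $r(\theta)=\cos(2\theta)$ and $\Gamma_n=\Gamma_{1/n}$: then the surviving coefficient $a_p+4b_p$ is strictly positive, so $J_p(\Gamma_{1/n})=1+c/n^2+o(1/n^2)>1$ for $n$ large, and $\Gamma_n\to\partial\cB$ uniformly since $\|R_{1/n}-1\|_\infty=1/n\to 0$; these curves are Lipschitz and centrally symmetric by construction.

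The main obstacle I anticipate is bookkeeping the second-order Taylor expansion cleanly — in particular handling the normalising denominator $|\Gamma_\varepsilon|^{-(p+1)}$, which itself contributes at order $\varepsilon^2$ through $|\Gamma_\varepsilon|^2=(2\pi)^2+\varepsilon^2\sum_k(1+k^2)|\hat r_k|^2 \cdot(\text{const})+\dots$, and making sure the cross terms between the $(1+\varepsilon r)^p$ factor and the arc-length factor are not dropped. It is cleanest to compute $\log J_p(\Gamma_\varepsilon)$ and expand that, so the denominator enters additively; then Parseval turns every $\int_0^{2\pi}r^2$, $\int_0^{2\pi}(r')^2$ into the Fourier sums above and the coefficient $a_p+b_p k^2$ can be read off mode by mode. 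A secondary point to be careful about is uniformity of the $o(\varepsilon^2)$ remainder in $r$: for (i) one fixes $r$ first and then sends $\varepsilon\to 0$, so no uniformity in $r$ is needed, and for (ii) $r$ is a single fixed function, so again there is no issue. Once the constant $a_p+4b_p$ is shown to factor through $(3-p)$ with a positive prefactor, both (i) and (ii) follow immediately and the value $p_*\le 3$ claimed before the proposition is established.
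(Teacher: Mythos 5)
Your proposal is correct and follows essentially the same route as the paper: a second-order Fuglede-type expansion of the functional in $\varepsilon$ with vanishing first variation, reduction via Parseval to a mode-by-mode quadratic form in which central symmetry kills the odd Fourier modes, identification of $k=2$ as the extremal mode with coefficient proportional to $p-3$, and the single test perturbation $r(\theta)=\cos(2\theta)$ for $p>3$. One small slip: for your monotonicity claim that larger $k$ only makes the coefficient $a_p+b_pk^2$ more negative you need $b_p<0$ (in the paper's normalisation the mode-$k$ coefficient is $(p+1)-k^2$, the $-k^2$ coming from $-\int|r'|^2$), not $b_p>0$ as written.
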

\begin{proof}
Consider the curve $\G_\varepsilon$ defined by the  parametrization of $\G_\varepsilon$ in \eqref{eq:Reps}. Since we assume that $r(\theta)=r(\theta+\pi)$, the curve $\G_\varepsilon$ is centrally symmetric.  

It is straightforward to check that with $\gamma_\varepsilon$  defined as in \eqref{eq:Reps},
\[
\begin{aligned}
|\gg_\varepsilon(\theta)|^p&=1+\varepsilon pr(\theta)+\frac{\varepsilon^2}{2}p(p-1)|r(\theta)|^2+\mathcal O(\varepsilon^3),\\
|\gamma_\varepsilon'(\theta)|&=1+\varepsilon r(\theta)+\frac{\varepsilon^2}{2}|r'(\theta)|^2+\mathcal O(\varepsilon^3),\\
|\G_\varepsilon|&=2\pi+\varepsilon\int_0^{2\pi}r(\theta)\dd\theta+\frac{\varepsilon^2}2\int_0^{2\pi}|r'(\theta)|^2\dd\theta+\mathcal O(\varepsilon^3).
\end{aligned}
\]
With the above formulas in hand, we get
\begin{multline*}
\frac{|\G_\varepsilon|^{p+1}}{(2\pi)^p}=2\pi+\varepsilon (p+1)\int_0^{2\pi}r(\theta)\dd\theta\\
+\varepsilon^2(p+1)\Bigl(\frac12\int_0^{2\pi}|r'(\theta)|^2\dd\theta+
\frac{p}{4\pi}\Bigl(\int_0^{2\pi}r(\theta)\dd\theta\Bigr)^2\Bigr)+\mathcal O(\varepsilon^3),    
\end{multline*}
and
\begin{equation}\label{eq:gecompe}
   \int_{\G_\varepsilon}|x|^p\dd\cH^1(x)-\frac{|\G_\varepsilon|^{p+1}}{(2\pi)^{p}}=\frac{\varepsilon^2p}{2}\Big(\cF(r)+\mathcal O(\varepsilon)\Bigr), 
\end{equation}
where
\[
\begin{aligned}
\mathcal F(r)&=(p+1)\int_0^{2\pi}|r(\theta)|^2\dd\theta-\frac{p+1}{2\pi}\Bigl(
\int_0^{2\pi}r(\theta)\dd\theta\Bigr)^2-\int_0^{2\pi}|r'(\theta)|^2\dd\theta\\
&=\int_0^{2\pi} \Bigl[(p+1)\Bigl(r(\theta)-\frac{1}{2\pi}\int_0^{2\pi}r(\theta)\dd\theta\Bigr)^2-|r'(\theta)|^2\Bigr]\dd\theta.
\end{aligned}\]
We expand $r$ as a Fourier series and notice that the coefficients of the odd indices will vanish, thanks to the symmetry condition on $r$. More precisely, we have
\[\begin{aligned}
r(\theta)-\frac{1}{2\pi}\int_0^{2\pi}r(\theta)\dd\theta
&=\sum_{n\geq 2}a_n\cos(n\theta)+\sum_{n\geq 2}b_n\sin(n\theta),\\
r'(\theta)&=-\sum_{n\geq 2}na_n\sin(n\theta)+\sum_{n\geq 2}nb_n\cos(n\theta).
\end{aligned}\]
By Parseval's identity, we write
\[\begin{aligned}
\int_0^{2\pi}\Bigl(r(\theta)-\frac{1}{2\pi}\int_0^{2\pi}r(\theta)\dd\theta\Bigr)^2\dd\theta
&=\pi\sum_{n\geq 2}\bigl(|a_n|^2+|b_n|^2\bigr),\\
\int_0^{2\pi}|r'(\theta)|^2\dd\theta &=\pi\sum_{n\geq 2}n^2\bigl(|a_n|^2+|b_n|^2\bigr).
\end{aligned}\]
Hence,
\begin{equation}\label{eq:four4}
   \int_0^{2\pi}|r'(\theta)|^2\dd\theta\geq 4\int_0^{2\pi}\Bigl(r(\theta)-\frac{1}{2\pi}\int_0^{2\pi}r(\theta)\dd\theta\Bigr)^2\dd\theta, 
\end{equation}
and consequently, for $p< 3$
\[\cF(r)\leq (p-3)\int_0^{2\pi}\Bigl(r(\theta)-\frac{1}{2\pi}\int_0^{2\pi}r(\theta)\dd\theta\Bigr)^2\dd\theta\le0.\]
For $p<3$, $\cF(r)$ vanishes if and only if the function $r$ is constant. To conclude the proof of (i), we take $\varepsilon\to0$ and note that  the disk is a strict local maximiser if and only if we have for all non-constant $r$ and $\varepsilon$ small enough
\begin{equation*}
     \int_{\G_\varepsilon}|x|^p\dd\cH^1(x)-\frac{|\G_\varepsilon|^{p+1}}{(2\pi)^{p}}<0.
\end{equation*}
Taking $\varepsilon\to0$ and using \eqref{eq:gecompe}, we obtain the desired result.

\bigskip

For (ii), note that choosing $r(\theta):=\sin(2\theta)$, we have equality in \eqref{eq:four4}, which leads to
\begin{equation}
    \cF(r)= (p-3)\int_0^{2\pi}\Bigl(r(\theta)-\frac{1}{2\pi}\int_0^{2\pi}r(\theta)\dd\theta\Bigr)^2\dd\theta.
\end{equation}
And for $p>3$, we get $\cF(r)>0$, which yields the claim by \eqref{eq:gecompe} for $\varepsilon$ small enough.
\end{proof}

\section{Proof of Theorem \ref{th:impmagintro} -- Applications to the magnetic Robin Laplacian}\label{sec:magnetic}
In this subsection, we show that Theorem \ref{th:isomom} can be used to relax the assumptions on the domain in the isoperimetric inequality for the lowest eigenvalue of the magnetic Robin Laplacian on a bounded domain with a negative boundary parameter, recently obtained in~\cite{KL} by the second and the third authors of the present paper. 

The operator we study involves 
the vector potential (magnetic potential)
\begin{equation}\label{eq:mp}
\Ab(x):=\frac12(-x_2,x_1)^\top,\quad\big( x=(x_1,x_2)\big)\,.
\end{equation}
and two parameters,  $b\geq 0$ standing for the intensity of the magnetic field and $\beta\le0$, the Robin parameter, appearing in the boundary condition. Let $\Omg\subset\dR^2$ be a bounded simply-connected smooth domain. Our magnetic Robin Laplacian, $\sfH_{\Omg}^{\beta,b}$, is  the self-adjoint operator defined by the closed, symmetric, densely defined and lower-semibounded quadratic form
\begin{equation}\label{eq:qf}
\frq_{\Omega}^{\beta,b}[u] :=\|(\nabla-\ii b\Ab)u\|^2_{L^2(\Omg;\dC^2)}+\beta\|u\|_{L^2(\partial\Omega)}^2,\quad \dom\frq_{\Omg}^{\beta,b} := H^1(\Omg),
\end{equation}
and it is characterized by
\[
\begin{split}
\dom\sfH_{\Omg}^{\beta,b} &\! = \!
\big\{
u\!\in\! H^1(\Omg)\colon
\exists\, w\in L^2(\Omg):
\frq_{\Omg}^{\beta,b}[u,v] = ( w, 
v)_{L^2(\Omg)}, \forall\,v\in\dom\frq_{\Omg}^{\beta,b}
\big\},\\
\sfH_{\Omg}^{\beta,b} u&:=-(\nabla-\ii b\Ab)^2 u= w\,.
\end{split}
\]
Denoting by $\nu$ the unit inward normal vector on $\p\Omg$, we observe that functions in $\dom\sfH_{\Omg}^{\beta,b}$ satisfy  the (magnetic) Robin boundary condition
\[\nu\cdot(\nabla-\ii b\Ab)u=\beta u\,\,\,\,{\rm on}\,\,\,\p\Omg\,.\]
The isoperimetric inequality obtained in \cite{KL} concerns the lowest eigenvalue of 
$\sfH_{\Omg}^{\beta,b}$, which we express in the variational  form  as follows 
\begin{equation}\label{eq:ev1}
\lambda_1^{\beta,b}(\Omega) {:=} \inf_{u\in H^1(\Omega)\setminus\{0\}} \frac{ {\frq_{\Omega}^{\beta,b}[u]}}{\|u\|^2_{L^2(\Omg)}}\,.
\end{equation}
Denoting by $\cB$ the  disk in $\R^2$ centered at the origin, with radius $R$  and having  the  same perimeter $ 2\pi R=|\partial\Omega|$ as the domain $\Omg$, it is known that the following inequality holds (see \cite[Theorem 4.8, Corollary~4.9]{KL})
\begin{equation}\label{eq:iso-p-ine}
\lambda_1^{\beta,b}(\Omega)\leq \lambda_1^{\beta,b}(\cB)\,,
\end{equation}
provided that 
\begin{itemize}
	\item[(i)] $\beta<0$ and $0<b<\min\big(R^{-2},4\sqrt{-\beta}\,R^{-3/2}\big)$ (i.e. the magnetic field's intensity $b$ is of moderate strength); and
	\item[(ii)] The \emph{inner parallel curves} of  $\Omg$ obey  the  condition \begin{equation}\label{eq:condition_St}	
	\int_{S_t}|x-x_0|^2\dd\cH^1(x) \le \frac{(L-2\pi t)^3}{4\pi^2}
	\end{equation}
	for some fixed point $x_0\in\dR^2$ and almost all $t\in(0,r_{\rm i}(\Omg))$.
	This condition holds for instance, when $\Omg\subset\cB$ or when $\Omg$ is  \emph{convex} and \emph{centrally symmetric} (see \cite[Proposition 4.4]{KL})
\end{itemize}
\begin{proof}[Proof of Theorem \ref{th:impmagintro}]
In view of Theorem~\ref{th:isomom} the condition in~\eqref{eq:condition_St} holds
with $x_0=0$ for all bounded centrally symmetric simply-connected smooth domains or, more generally, with $x_0$ being the centroid of all $S_t(\Omg)$ for all simply-connected smooth domains $\Omg$ such that the centroid of the inner parallel curve $S_t(\Omg)$ is independent of $t$. Thus, we relaxed the convexity assumption on the domain $\Omg$. We obtain Theorem \ref{th:impmagintro} with the choice $b_0(|\partial\Omega|, \beta)= \min\{R^{-2},4\sqrt{-\beta} R^{-3/2}\}$, where $R:=\frac{|\partial\Omega|}{2\pi}$.
\end{proof}
\section{Some direct consequences of Theorems \ref{th:isomom} and \ref{th:curveconstr}}\label{sec:appl}
\subsection{A refined bound on the length of the disconnected inner parallel curve}
\label{ssec:refinement}
In this subsection we use Theorem~\ref{thm:main} to get a refined upper bound on the length of the inner parallel curve $S_t$ in the situation when $S_t$ consists of several connected components

Let $\Omg\subset\dR^2$ be a bounded simply-connected smooth domain
with perimeter $L > 0$. Let the inner parallel curve $S_t\subset\Omg$ be as in~\eqref{eq:stdef}. For any $t\in\cL$ we have by Proposition~\ref{prop:parallelcurve}
for some $N\in\dN$
\[
	S_t = \bigcup_{n=1}^N \G_n,\qquad |S_t| \le L -2\pi t,
\]
where $\{\G_n\}_{n=1}^N$ are piecewise-smooth closed simple curves
 that pairwise disjoint.  In the case that $S_t$ is connected, one has $N = 1$. However, in general $N$ can be an arbitrarily large integer number. 
In the case that $N = 2$ we immediately get as a consequence of Theorem~\ref{thm:main}
\[
	|S_t| + 2\dist(\G_1,\G_2) \le L-2\pi t.
\]
This observation can be generalized to the case of arbitrary $N\in\dN$ to  improve Hartman's bound \eqref{eq:sthartest}, see \cite{H64}, on the length of the inner parallel curve \(S_t\).
\begin{corollary}\label{co:hartimp}
    For all $t\in\cL$, it holds that
\[
	|S_t| + \sum_{n=1}^N\dist(\G_n,S_t\sm\G_n) \le L-2\pi t.
\]
\end{corollary}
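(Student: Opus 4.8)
The strategy is to refine the construction of $\sigma_t$ from the proof of Theorem~\ref{thm:main} so that the bookkeeping of the connecting segments is done component-by-component rather than globally. Recall that in that proof we ordered the smooth arcs $C_1,\dots,C_m$ of $S_t$ by the parameter along $\partial\Omega$ and joined the terminal point $\frq_k$ of $C_k$ to the initial point $\frp_{k+1}$ of $C_{k+1}$ by a segment $\cI_k$, obtaining $\ell(\sigma_t)=|S_t|+\sum_{k=1}^m|\cI_k|\le L-2\pi t$. Thus what we actually proved is the sharper statement $|S_t|+\sum_{k=1}^m |\cI_k|\le L-2\pi t$, and it remains to bound $\sum_k |\cI_k|$ from below by $\sum_{n=1}^N \dist(\G_n, S_t\sm\G_n)$.

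\textbf{Key steps.} First I would set up a graph (or multigraph) $G$ whose vertices are the connected components $\G_1,\dots,\G_N$ of $S_t$, and whose edges are the $m$ connecting segments $\cI_k$: the edge for $\cI_k$ joins the component containing $\frq_k$ to the component containing $\frp_{k+1}$ (a loop if these coincide). Since the curve $\sigma_t([0,L])$ traced out by $\sigma_t$ is connected and contains every $\G_n$, the graph $G$ is connected; hence for each $n$, removing the vertex $\G_n$ still leaves the remaining $N-1$ components joined to each other through edges, but more importantly $\G_n$ itself is incident to at least one edge, i.e.\ there is at least one segment $\cI_k$ with one endpoint on $\G_n$ and the other endpoint on $S_t\sm\G_n$ --- unless $N=1$, in which case the claimed inequality is just Hartman's bound and there is nothing to prove. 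Second, I would observe that each segment $\cI_k$ realizes, by definition, a distance $|\cI_k| = |\frq_k - \frp_{k+1}| \ge \dist(\G_{n}, \G_{n'})$ where $\G_n \ni \frq_k$ and $\G_{n'}\ni \frp_{k+1}$; in particular $|\cI_k|\ge \dist(\G_n, S_t\sm\G_n)$ for the component(s) at each of its ends. Third, the combinatorial heart: I want to select, for each $n\in\{1,\dots,N\}$, an edge $e(n)=\cI_{k(n)}$ incident to $\G_n$ and leading out of $\G_n$, in such a way that no edge is chosen more than \emph{twice} (once from each of its two endpoints), so that
\[
\sum_{n=1}^N \dist(\G_n, S_t\sm\G_n) \le \sum_{n=1}^N |e(n)| \le 2\sum_{k=1}^m |\cI_k|.
\]
Wait --- that gives a factor $2$ on the wrong side. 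So instead I must be more careful: I should exploit that the segments come from a \emph{single closed curve}, meaning the edges of $G$ form a closed walk visiting every vertex, hence $G$ has an Eulerian-type structure (every vertex has even degree in the closed-walk multigraph, or at least the closed walk induces an orientation). Then I can charge each term $\dist(\G_n, S_t\sm\G_n)$ to a \emph{distinct} outgoing edge of $\G_n$ along the closed walk, using that along the closed walk each component is entered and exited the same number of times, so there are as many "outgoing" segments at $\G_n$ as there are visits, and at least one such outgoing segment leaves $\G_n$ entirely (i.e.\ is not a loop at $\G_n$) --- this needs the observation that a loop at $\G_n$ can be discarded or rerouted. The cleanest route: contract each $\G_n$ to a point to get a connected multigraph on $N$ vertices with $m$ edges that supports a closed walk covering all vertices; delete loops (they only decrease $\sum|\cI_k|$); what remains is a connected multigraph, so it has a spanning tree with $N-1$ edges, and orienting/rooting this tree lets me injectively assign to each of the $N-1$ non-root vertices the tree-edge to its parent, while the root's term $\dist(\G_{\text{root}},\cdot)$ is absorbed because $m\ge N-1$ leaves at least one spare edge --- and since each assigned edge $\cI_k$ satisfies $|\cI_k|\ge\dist(\G_n,S_t\sm\G_n)$ for the child $\G_n$, summing gives $\sum_{n}\dist(\G_n,S_t\sm\G_n)\le\sum_k|\cI_k|$, which combined with $|S_t|+\sum_k|\cI_k|\le L-2\pi t$ finishes the proof.

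\textbf{Main obstacle.} The delicate point is precisely this last combinatorial assignment: matching each of the $N$ summands $\dist(\G_n,S_t\sm\G_n)$ to a \emph{distinct} connecting segment $\cI_k$ with $|\cI_k|\ge\dist(\G_n,S_t\sm\G_n)$. The danger is double-counting a short segment for two different components, which would break the inequality. Resolving this requires using more than mere connectedness of $\sigma_t([0,L])$ --- one must use that $\sigma_t$ is a single closed curve, so the segments $\{\cI_k\}$, together with the arcs, form a closed walk through all components; equivalently, the component-multigraph admits a closed walk covering every vertex, which forces enough edges ($m\ge N$, in fact, counting with the closure $\cI_m$ back to $\G_1$, or $m\ge N-1$ after discarding the degenerate/loop segments) and the right parity to carry out an injective charging. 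Making this rerouting/contraction argument precise, and checking the boundary bookkeeping with $a_{m+1}=L$, $b_{m+1}=b_1$ and the possibly-degenerate segment $\cI_m$, is where the real work lies; everything else is a direct transcription of the already-proved estimates \eqref{eq:expr1} and \eqref{eq:bnd1}.
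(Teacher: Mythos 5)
Your proposal is correct and follows essentially the same route as the paper: the paper likewise starts from $\ell(\s_t)=|S_t|+\sum_{k=1}^m|\cI_k|\le L-2\pi t$ and then charges each component $\G_n$ to a segment $\cI_k$ exiting it along the cyclic sequence of components visited by the closed curve, using $|\cI_k|\ge \dist(\Gamma_{n(k)},\Gamma_{n(k+1)})\ge \dist(\G_n,S_t\sm\G_n)$. The only caveat is that your final spanning-tree variant is both unnecessary and under-justified as stated (the root's term needs a spare non-loop edge, which you assert rather than prove); your earlier, simpler observation --- that for $N\ge2$ each component has at least one outgoing non-loop segment along the closed walk, and outgoing segments of distinct components are automatically distinct --- is exactly the paper's injective assignment and already closes the argument.
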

\begin{proof}
	The length of the closed piecewise-smooth curve parametrized by the mapping $\s_t$
	constructed in the proof of Theorem~\ref{thm:main} is given by
	\[
		\ell(\s_t) = |S_t| + \sum_{k=1}^m|\cI_k|.
	\]
 Every $\cI_k$ connects some $\Gamma_{n(k)}$ with some $\Gamma_{n(k+1)}$, and for each $n\in\{1,\dots,N\}$ there is at least one  \(k\in \{1,\dots,m\}\) such that $n=n(k)$. 
	  Hence, we get that
   	\[	
		\sum_{k=1}^m|\cI_k| \ge\sum_{k=1}^m \dist(\Gamma_{n(k)}, \Gamma_{n(k+1)})\ge\sum_{n=1}^N \dist(\G_n, S_t\sm\G_n).
	\]
	Thus, we conclude that
	\[
		|S_t|
		+ \sum_{n=1}^N \dist(\G_n, S_t\sm\G_n)\le \ell(\s_t) \le L-2\pi t.\qedhere
	\]
\end{proof}

\subsection{Moments of inertia of domains}\label{ssec:moments}

In this subsection, we apply Theorem~\ref{th:isomom} to recover an isoperimetric upper bound on the moment of inertia for the domain $\Omg$ itself leading to an alternative proof of a result due to Hadwiger~\cite{H56}.

\begin{corollary}
    \label{co:integriso}
Let $\Omega \subset \mathbb{R}^{2}$ be a smooth, bounded, simply connected domain. Assume that $\Omg$ is centrally symmetric or, more generally, that the origin is the centroid of $S_t$  for almost every $t\in (0,r_{\rm i}(\Omg))$. Then
\begin{align}
\int_{\Omega}|x|^{2} \dd x \leq \int_{\cB}|x|^{2} \dd x,
\end{align}
where $\cB \subset \mathbb{R}^{2}$ is a disk centered at the origin with the same perimeter as $\Omega$.
\end{corollary}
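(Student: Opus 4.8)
\textbf{The plan} is to integrate the isoperimetric inequality for inner parallel curves from Theorem~\ref{th:isomom} over the parameter $t$ after slicing the moment of inertia of $\Omega$ along level sets of the distance function $\rho$. The starting point is the coarea formula: since $|\nabla\rho| = 1$ almost everywhere on $\Omega$, we have
\[
\int_\Omega |x|^2 \dd x = \int_0^{r_{\rm i}(\Omg)} \left(\int_{S_t} |x|^2 \dd\cH^1(x)\right) \dd t.
\]
Under the centroid hypothesis, the centroid of $S_t(\Omg)$ equals the origin for almost every $t$, so $c(t) = 0$ and the inner integrand is exactly $\int_{S_t(\Omg)}|x - c(t)|^2\dd\cH^1(x)$, to which Theorem~\ref{th:isomom} applies. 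This yields the pointwise-in-$t$ bound $\int_{S_t(\Omg)}|x|^2\dd\cH^1(x) \le \int_{S_t(\cB)}|x|^2\dd\cH^1(x)$ for almost every $t \ge 0$, with $\cB$ the disk of the same perimeter $L$ centred at the origin.

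Next I would integrate this inequality in $t$. On the left, the integration range $(0, r_{\rm i}(\Omg))$ and on the right $(0, L/(2\pi))$, which is the in-radius of $\cB$; since $S_t(\Omg) = \varnothing$ for $t \ge r_{\rm i}(\Omg)$ and $S_t(\cB) = \varnothing$ for $t \ge L/(2\pi)$, extending both integrals to $(0,\infty)$ is harmless. Thus
\[
\int_\Omega |x|^2 \dd x \le \int_0^\infty \left(\int_{S_t(\cB)}|x|^2\dd\cH^1(x)\right)\dd t = \int_{\cB}|x|^2\dd x,
\]
where the last equality is again the coarea formula, now applied on the disk $\cB$ (where $\rho$ is smooth and $|\nabla\rho| = 1$ away from the centre). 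Alternatively, one can simply evaluate both sides explicitly: $\int_{S_t(\cB)}|x|^2\dd\cH^1(x) = 2\pi(\tfrac{L}{2\pi} - t)^3$, so the right-hand side integrates to $2\pi \cdot \tfrac14 (\tfrac{L}{2\pi})^4 = \tfrac{\pi}{2}(\tfrac{L}{2\pi})^4$, which is indeed $\int_{\cB}|x|^2\dd x$ for a disk of radius $L/(2\pi)$.

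\textbf{The main obstacle} is justifying that the coarea formula applies with the clean identity $|\nabla\rho| = 1$, and that the slices $S_t$ appearing in the coarea decomposition coincide (up to the $\cH^1$-null exceptional set) with the inner parallel curves as parametrised in Proposition~\ref{prop:parallelcurve}. This is standard: $\rho$ is Lipschitz with $|\nabla\rho| = 1$ a.e.\ on $\Omega$ (it is the distance to the boundary of a domain), the coarea formula for Lipschitz functions gives the slicing, and by the results cited from~\cite{SST, S01, H64} the level set $\{\rho = t\}$ is $\cH^1$-rectifiable and agrees with $S_t$ for a.e.\ $t$. One should also note that the exceptional null set of $t$'s where Theorem~\ref{th:isomom} may fail does not affect the integral. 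A subtle point worth a sentence: the hypothesis only guarantees the centroid of $S_t$ is the origin for a.e.\ $t$, but that is all we need since we integrate. Finally, for the equality case one may remark (though it is not asked for in the statement) that equality forces equality in Theorem~\ref{th:isomom} for a.e.\ $t$, hence $\Omg = \cB$.
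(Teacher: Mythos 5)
Your proposal is correct and follows essentially the same route as the paper: slice the moment of inertia via the coarea formula for the distance function, apply Theorem~\ref{th:isomom} to each inner parallel curve (with $c(t)=0$ by the centroid hypothesis), and integrate in $t$, comparing with the explicit value for the disk. The only cosmetic difference is that the paper handles the mismatch of integration ranges by noting $r_{\rm i}(\Omg)\le R$ (a consequence of the classical isoperimetric inequality) and enlarging the upper limit to $R$, whereas you extend both integrals to $(0,\infty)$ using that the level sets are eventually empty; both are valid.
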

\begin{proof}
Recall that $\Omg\subset\dR^2$ is a bounded simply-connected smooth domain with the perimeter $L > 0$ and the origin
being the centroid of $S_t$ for almost every $t\in (0,r_{\rm i}(\Omg))$, and that $\cB\subset\dR^2$ is the disk of radius $R = \frac{L}{2\pi}$, having thus the same perimeter as $\Omg$. By the geometric  isoperimetric inequality we have $|\Omg|\le |\cB|$ and therefore it holds that $R\ge r_{\rm i}(\Omg)$.

\medskip

Recall  the co-area formula    in two dimensions  (see~\cite[Thm. 4.20]{B19} and~\cite{MSZ02}).  If $\cA\subset\dR^2$ is an open set,  $f\colon\cA\arr\dR$ is a Lipschitz continuous real-valued function,   and  $g\colon\cA\arr\dR$ is an integrable function,  then we have
\begin{equation}\label{eq:coarea}
\int_\cA g(x)|\nabla f(x)|\dd x =
\int_\dR\int_{f^{-1}(t)} g(x)\dd \cH^1(x)\,\dd t.
\end{equation}
Applying the co-area formula~\eqref{eq:coarea} with $\cA = \Omg$, $g(x) = |x|^2$ and $f(x) = \rho(x)$ (the distance function to the boundary of $\Omg$ defined in \eqref{eq:distance}) we get using the inequality in Theorem~\ref{th:isomom},
\begin{equation}\label{eq:moment_isoperimetric}
\begin{aligned}
 \int_\Omg |x|^2\dd x &= \int_0^{r_{\rm i}(\Omg)}\int_{S_t}|x|^2\dd\cH^1(x)
	\le\int_0^{r_{\rm i}(\Omg)}\frac{(L-2\pi t)^3}{4\pi^2}\dd t\\
	&\le \int_0^{R}\frac{(L-2\pi t)^3}{4\pi^2}\dd t = 
	2\pi\int_0^R(R-t)^3\dd t = \frac{\pi R^4}{2} = \int_{\cB}|x|^2\dd x
 .\qedhere
\end{aligned}
\end{equation}
\end{proof}
\begin{remark}
The isoperimetric inequality \eqref{co:integriso} 
in the case of centrally symmetric domains can  be alternatively derived from the inequality by Hadwiger~\cite{H56}, where only convex domains were considered.   Let $\cK\subset\dR^2$ be a bounded convex domain with a Lipschitz boundary.  We can translate the domain $\cK$ so that the origin becomes the centroid of $\cK$ in the sense that
\[
	\int_{\cK} x \dd x = 0.
\]
Let $\cB'\subset\dR^2$ be the disk
centred at the origin of the same perimeter as $\cK$.
It is proved in~\cite{H56} that 
\begin{equation}\label{eq:Hadwiger}
\int_{\cK}|x|^2\dd x \le \int_{\cB'}|x|^2\dd x.
\end{equation}
Let us define the domain $\cK$ as the convex hull of the bounded simply-connected centrally symmetric smooth  $\Omg\subset\dR^2$. Then, the perimeter of $\cK$ does not exceed the perimeter of $\Omg$. This is a well-known fact, whose proof can be found \emph{e.g.} in~\cite{T02}. Moreover, the convex domain $\cK$ is centrally symmetric as well. Therefore, the origin is the centroid of $\cK$. Hence, we get from~\eqref{eq:Hadwiger}
\[
 \int_{\Omg}|x|^2\dd x\le \int_{\cK}|x|^2\dd x\le
	\int_{\cB'}|x|^2\dd x\le \int_{\cB}|x|^2\dd x ,
\]  
where we used that the perimeter of \(\Omg\) is larger than or equal to the perimeter of \(\cK\), so  the radius of $\cB'$ does not exceed the radius of $\cB$.
\end{remark}
\subsection*{Acknowledgements} 
CD would like to express her deepest gratitude to Phan Th\`anh Nam for his continued support and very helpful advice. She would also like to thank Laure Saint-Raymond for her support, inspiring discussions and hospitality at Institut des Hautes \'Etudes Scientifiques, where large parts of this work were carried out. 
\medskip

Special thanks go to Mirek Ol\v{s}\'{a}k for his ideas and discussions on an elementary proof of Proposition~\ref{prop:geometric_bound}, see Appendix \ref{sec:geom_ineq}. CD would like to thank Prasuna Bandi, Sami Fersi, Matthias Paulsen, and Shuddhodan Kadattur Vasudevan, and Leonard Wetzel for helpful discussions on Theorem \ref{th:curveconstr}. Furthermore, she would like to thank Bo Berndtsson, Jan Derezi\'nski, Dmitry Faifman, Rupert Frank, Jonas Peteranderl, Heinz Siedentop, and Jakob Stern for helpful discussions on Theorem \ref{th:pthmoments} and further directions. She would also like to thank Larry Read for helpful discussions and his help in creating the images for the introduction.
AK and CD wish to thank P. Mironescu for discussions around \cite{BM}.
\medskip

CD acknowledges the support from the Deutsche Forschungsgemeinschaft (DFG project Nr.~426365943), from the Jean-Paul Gimon Fund, and from the Erasmus+ programme.
AK acknowledges the  support from The Chinese University of Hong Kong, Shenzhen (grant UDF01003322), and from the Knut and Alice Wallenberg Foundation (grant KAW 2021.0259). VL acknowledges the support by the grant No.~21-07129S of the Czech Science Foundation (GA\v{C}R).
\bigskip

\appendix

\section{An elementary proof of an auxiliary geometric inequality }\label{app}
The ultimate goal of this appendix is to prove Proposition~\ref{prop:geometric_bound}.
In Subsection~\ref{sec:winding}, we obtain additional properties of the winding numbers of non-closed curves in the plane, which are extensively used in Subsection~\ref{ssec:proof}, where we present an elementary proof of Proposition~\ref{prop:geometric_bound}, due to Mirek Ol\v{s}\'{a}k.  
\subsection{Winding numbers}\label{sec:winding}
In this subsection we recall the concept of a winding number of a non-closed curve in the plane and provide some of its properties.  In particular, we establish a connection between the winding numbers and the total curvature of a non-closed curve. Further details can be found e.g. in~\cite[Chapter 2]{ASS}. 

First, we define the winding number of a non-closed curve with respect to a point in the plane not lying on this curve. 
\begin{definition}\label{dfn:winding}
	Let $\Gamma\subset\R^2$
	be a simple smooth non-closed curve with endpoints $x_1,x_2\in\R^2$, $x_1\ne x_2$, parametrized by the mapping $\gamma\colon[s_1,s_2]\rightarrow\R^2$, ($s_1 < s_2$) with $\gamma\in C^\infty([s_1,s_2];\R^2)$
	such that $|\gamma'(s)|= 1$ for all $s\in[s_1,s_2]$ and that $\gg(s_j) = x_j$ for $j=1,2$.  Let the point $x_0\in\R^2\setminus\Gamma$ be fixed.
	By~\cite[Corollary 2.2]{ASS} there exists a unique continuous angle function $\Th\colon[s_1,s_2]\rightarrow\R$ with $\Th(s_1)\in [0,2\pi)$ such that 
	\begin{equation}\label{eq:angle_function}
	\gamma(s) = x_0+ |\gamma(s)-x_0|\big(\cos\Th(s),\sin\Th(s)\big)^\top, \qquad\text{for all}\,\, s\in [s_1,s_2].
	\end{equation}
	Then we define the winding number of $\Gamma$ with respect to $x_0$ by
	\[
	w_\Gamma(x_0) := \Th(s_2) - \Th(s_1).
	\]
\end{definition}
\begin{remark}\label{rem:sign-wn}
	The sign of the winding number in Definition~\ref{dfn:winding} depends
	on the choice of the starting and the terminal points of the curve $\G$. In the following, we will always specify the order of the endpoints.
\end{remark}
\begin{remark}\label{rem:phase-wn}
The condition \(\Th(s_1)\in[0,2\pi)\) ensures the uniqueness of \(\Th\),  but is unnecessary to define the winding number.  In fact,  if \(\widetilde\Th:[s_1,s_2]\to\mathbb R\) is a continuous function such that  \(\gamma(s) -x_0= |\gamma(s)-x_0|\big(\cos\widetilde\Th(s),\sin\widetilde\Th(s)\big)^\top\),  then  \(\widetilde\Th\) is the same as \(\Th\)  up to a shift by an integer multiple of \(2\pi\),  and consequently \(w_\Gamma(x_0) := \widetilde\Th(s_2) -\widetilde \Th(s_1)\).
\end{remark}

Let $\Gamma_1,\Gamma_2\subset\R^2$ be  non-closed curves parametrized by arc-length via the  mappings $\gamma_1\in C^\infty([s_0,s_1];\R^2)$, $\gamma_2\in C^\infty([s_1,s_2];\R^2)$ such that $\gamma_1(s_1)=\gamma_2(s_1)$ (i.e. the terminal point of $\Gamma_1$ is the starting point of $\Gamma_2$).
Then, for any $x_0\in\R^2\setminus(\Gamma_1\cup\Gamma_2)$ one has by~\cite[Theorem~7.8]{ST}
\begin{equation}\label{eq:additivity}	
w_{\Gamma_1\cup\Gamma_2}(x_0) = w_{\Gamma_1}(x_0) + w_{\Gamma_2}(x_0).
\end{equation}

Definition~\ref{dfn:winding} can be extended to the case when the winding number is computed with respect to one of the endpoints $x_1,x_2$ of the curve $\Gamma$, because the unique continuous angle function parametrizing the curve $\G$ as in~\eqref{eq:angle_function} still exists in this situation.       Here,   the  fact that $\G$ is smooth up to the endpoints is helpful for the existence of such an angle function.   More precisely,   we can extend  $\gamma$ to   $\widetilde\gamma\in C^\infty([s_1-\alpha,s_2+\alpha];\R^2)$,  for some $\alpha>0$,  such that   \(|\widetilde\gamma\,'(s)|=1\) on $[s_1-\alpha,s_2+\alpha]$ and \(\widetilde\Gamma:=\widetilde\gamma([s_1-\alpha,s_2+\alpha])\) is a simple curve.  In particular,  we know that \(\widetilde\gamma(s)\not\in\Gamma\) for \(s<s_1\) and \(s>s_2\).   We then define the winding numbers with respect to \(\gamma(s_1)\) and \(\gamma(s_2)\) as follows
\begin{equation}\label{eq:ext-wn}    
w_{\G}\bigl(\gamma(s_1)\bigr):=\lim_{s\nearrow s_1}w_\Gamma\bigl(\widetilde\gamma(s)\bigr)\quad\mbox{and}\quad w_{\G}\bigl(\gamma(s_2)\bigr):=\lim_{s\searrow s_2}w_\Gamma\bigl(\widetilde\gamma(s)\bigr).
\end{equation}
Let us remark that   the additivity of the winding number under gluing the curves stated in~\eqref{eq:additivity} remains valid upon extension of the definition to the case when the winding number is computed relative to an endpoint of $\G_1\cup\G_2$.   

That the limits in \eqref{eq:ext-wn} do exist and are independent of the choice of the extension \(\widetilde\gamma\) is the subject of the following lemma.
\begin{lemma}\label{lem:ext-wn}
Suppose that $\wG$ is a simple curve parametrized by \( \wg\in C^{\infty}([s_1-\alpha,s_2+\alpha];\mathbb R^2)\) such that $|\wg\,'(s)|=1$ on \([s_1-\alpha,s_2+\alpha]\).  For \(s,t\in(s_1-\alpha,s_2+\alpha)\),  let \(\wG_{s,t}=\wg([s,t])\).  Then,  the following limits exist
\begin{equation}\label{eq:ext-wn*}
\lim_{s\nearrow s_1}w_{\wG_{s_1,s_2}}\bigl(\widetilde\gamma(s)\bigr)\quad\mbox{and}\quad\lim_{s\searrow s_2}w_{\,\wG_{s_1,s_2}}\bigl(\widetilde\gamma(s)\bigr),
\end{equation}
and only depend on \(\wg|_{[s_1,s_2]}\).  Furthermore,  for any \(s_0\in[s_1,s_2]\),  we have
\begin{equation}\label{eq:cont-wn}
\lim_{\substack{\\s\to s_0\\s\in(s_1,s_2+\alpha)}}w_{\,\wG_{s_1,s}}\bigl(\wg(s)\bigr)=w_{\,\wG_{s_1,s_0}}\bigl(\wg(s_0)\bigr),
\end{equation}
with the convention that \(w_{\,\wG_{s_1,s}}\bigl(\wg(s)\bigr)=\lim_{\tilde s\searrow s}w_{\,\wG_{s_1,s}}\bigl(\wg(\tilde s)\bigr)\),  and that the right-hand side in \eqref{eq:cont-wn} is zero for \(s_0=s_1\).
\end{lemma}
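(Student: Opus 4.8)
The plan is to reduce everything to a single elementary fact: for a smooth arc-length parametrized simple curve, the angle function from a nearby off-curve point depends continuously on that point, and the dependence extends continuously up to the endpoints of the arc. First I would fix an extension $\wg\in C^\infty([s_1-\alpha,s_2+\alpha];\dR^2)$ with $|\wg\,'|=1$ and $\wG$ simple, and work entirely inside the tubular neighbourhood of $\wG$ given by the normal exponential map $(s,u)\mapsto\wg(s)+u\nb(s)$, which is a diffeomorphism onto its image for $|u|$ small by the standard tubular neighbourhood theorem (this is the two-dimensional analogue of the statement already invoked via \cite{L18} for $\Phi$). For a point $x_0=\wg(s)+u\nb(s)$ with $u\ne0$ small, the vector $\wg(\tau)-x_0$ is nonzero for all $\tau\in[s_1,s_2]$, and I would track the continuous angle function $\Th_{x_0}(\tau)$ normalised by Remark~\ref{rem:phase-wn}; the winding number is $\Th_{x_0}(s_2)-\Th_{x_0}(s_1)$, and this is manifestly a continuous function of $x_0$ on the punctured neighbourhood (away from $\wG$) by \cite[Corollary 2.2]{ASS} applied with parameters. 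The only delicate point is the behaviour as $x_0\to\wg(s_1)$ (resp.\ $\wg(s_2)$) \emph{from the extension side}, i.e.\ along $\wg(s)$ with $s\nearrow s_1$.

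For the limit at $s_1$: write $w_{\wG_{s_1,s_2}}(\wg(s)) = w_{\wG_{s,s_1}}(\wg(s)) + w_{\wG_{s_1,s_2}}(\wg(s)) - w_{\wG_{s,s_1}}(\wg(s))$ — more usefully, use additivity \eqref{eq:additivity} to split $w_{\wG_{s,s_2}}(x_0)=w_{\wG_{s,s_1}}(x_0)+w_{\wG_{s_1,s_2}}(x_0)$ for $x_0$ off the whole curve, but since here $x_0=\wg(s)$ is an endpoint of $\wG_{s,s_2}$ I would instead argue directly. The point is that the short sub-arc $\wG_{s,s_1}$ issuing from $\wg(s)$ is, for $s$ close to $s_1$, nearly a straight segment in the direction $\wg\,'(s_1)$, so its angular contribution seen from its own endpoint $\wg(s)$ tends to a definite value (a half-turn correction governed only by the direction $\wg\,'(s_1)$), while the contribution of $\wG_{s_1,s_2}$ seen from $\wg(s)$ converges to the contribution seen from $\wg(s_1)$ because $\wg(\tau)-\wg(s)\to\wg(\tau)-\wg(s_1)\ne0$ uniformly in $\tau\in[s_1,s_2]$ away from $\tau=s_1$, and near $\tau=s_1$ one uses the first-order Taylor expansion $\wg(\tau)-\wg(s_1)=(\tau-s_1)\wg\,'(s_1)+O((\tau-s_1)^2)$ to see the angle stabilises. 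Concretely I would prove \eqref{eq:cont-wn} first: for $s_0\in(s_1,s_2]$ this is just joint continuity of the angle function on the region $\{(x_0,\tau): x_0\notin\wg([s_1,\tau])\}$ together with the fact that as $x_0=\wg(s)\to\wg(s_0)$ the endpoint of the arc also moves to $s_0$, and the Taylor expansion controls the extra angle swept by the tiny arc $\wg([s_0,s])$; for $s_0=s_1$ the right-hand side is declared to be $0$ and one checks $w_{\wG_{s_1,s}}(\wg(s))\to0$ as $s\searrow s_1$ because the whole arc shrinks to a point and its angular variation seen from one of its endpoints is $O(s-s_1)$ by the same first-order expansion. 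Independence from the choice of extension then follows because any two admissible extensions agree on $[s_1,s_2]$, and the limiting value computed above is expressed purely in terms of $\wg|_{[s_1,s_2]}$ and the one-sided derivative $\wg\,'(s_1^+)$ (resp.\ $\wg\,'(s_2^-)$), which is itself determined by $\wg|_{[s_1,s_2]}$.

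The main obstacle I anticipate is making the phrase ``the tiny arc contributes a controlled angle seen from its own moving endpoint'' fully rigorous: one is computing a winding-type quantity with respect to a point that sits \emph{on} the extended curve but \emph{off} the sub-arc in question, and as $s\to s_1$ both the evaluation point and an endpoint of the arc collapse together. I would handle this by rescaling near $s_1$: set $\tau=s_1+\sigma\varepsilon$, $s=s_1-\varepsilon$ (or the appropriate signs), so that after dividing by $\varepsilon$ the curve $\wg$ becomes, to leading order, the straight line through the origin in direction $\wg\,'(s_1)$, and the angle function of this rescaled configuration converges uniformly; since winding numbers (angle differences) are scale-invariant, the limit of $w$ equals the winding number of the limiting straight-line configuration, which is an explicit elementary quantity. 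Everything else — existence of the tubular neighbourhood, joint continuity of $\Th_{x_0}(\tau)$ in the interior, additivity under concatenation — is quoted from \cite{ASS}, \cite{ST}, and \cite{L18} or is routine.
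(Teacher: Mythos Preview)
Your approach is correct in outline but takes a different route from the paper. You work locally: split the winding number via additivity into a contribution from a short sub-arc near the colliding endpoint and a bulk contribution, control the short piece by a Taylor/rescaling argument (its angular variation seen from the moving point is $O(\delta)$ since after rescaling the configuration degenerates to a straight segment), and let the bulk converge by ordinary continuity because the evaluation point stays bounded away from it. The paper instead packages the same Taylor expansion into a single global object: it observes that the $\mathbb{S}^1$-valued map
\[
(\varsigma_1,\varsigma_2)\ \longmapsto\ \frac{\wg(\varsigma_1)-\wg(\varsigma_2)}{|\wg(\varsigma_1)-\wg(\varsigma_2)|}
\]
extends continuously across the diagonal $\{\varsigma_1=\varsigma_2\}$ by the value $-\wg\,'(\varsigma_2)$, and then lifts this to one continuous real phase $\phi$ on the simply connected square $(s_1-\alpha,s_2+\alpha)^2$ via \cite[Lem.~1.1]{BM}. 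Every winding number in the lemma is then a difference $\phi(\cdot,\cdot)-\phi(\cdot,\cdot)$, so both \eqref{eq:ext-wn*} and \eqref{eq:cont-wn} reduce to continuity of $\phi$, and independence from the extension is automatic since the limiting values involve only arguments in $[s_1,s_2]^2$. Your route is more hands-on and avoids quoting a lifting lemma; the paper's route trades your near/far case analysis for a single two-variable continuity statement. One minor remark: the tubular neighbourhood you set up at the start plays no role in the argument you actually run---everything goes through Taylor's formula and additivity---so you can drop it.
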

We postpone the proof of Lemma~\ref{lem:ext-wn} until the end of this section. In the next proposition, we provide a connection between the winding numbers relative to the endpoints of a non-closed curve and the total curvature of this curve. 
\begin{proposition}\label{prop:curvature_winding}
	Let the smooth mapping $\gamma\colon[s_1,s_2]\rightarrow\R^2$ with $|\gamma'(s)|=1$ parametrize a simple non-closed curve $\Gamma\subset\R^2$. Let $\kappa\colon[s_1,s_2]\rightarrow\R$
	be the signed curvature of $\Gamma$ (defined as in~\eqref{eq:curvature}).   Then the following holds
	\begin{equation}\label{eq:angle}
	\int_{s_1}^{s_2}\kappa(s)\dd s= w_{\Gamma}(\gamma(s_1))+w_{\Gamma}(\gamma(s_2)).
	\end{equation}
\end{proposition}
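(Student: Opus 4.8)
The plan is to relate the total curvature $\int_{s_1}^{s_2}\kappa(s)\,\dd s$ to the rotation of the unit tangent vector $\gamma'$, and then to decompose that rotation into two pieces, each of which is the winding number of $\Gamma$ about one of its endpoints. First I would recall that, since $|\gamma'(s)|=1$, there is a continuous ``turning angle'' function $\varphi\colon[s_1,s_2]\to\R$ with $\gamma'(s)=(\cos\varphi(s),\sin\varphi(s))^\top$, and that by the definition of signed curvature in~\eqref{eq:curvature} together with the Frenet formula~\eqref{eq:def-k} one has $\varphi'(s)=\kappa(s)$; hence $\int_{s_1}^{s_2}\kappa(s)\,\dd s=\varphi(s_2)-\varphi(s_1)$. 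So the claim reduces to showing $\varphi(s_2)-\varphi(s_1)=w_\Gamma(\gamma(s_1))+w_\Gamma(\gamma(s_2))$.

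Next I would analyze the two endpoint winding numbers in terms of the angle functions for the secant directions. Fix $s_0\in(s_1,s_2)$. Using the additivity~\eqref{eq:additivity} (in its extended form valid at endpoints) and Lemma~\ref{lem:ext-wn}, write $w_\Gamma(\gamma(s_2))=\lim_{s\nearrow s_2}\Theta_2(s)-\Theta_2(s_1)$, where $\Theta_2$ is a continuous angle function for $s\mapsto \gamma(s)-\gamma(s_2)$; as $s\nearrow s_2$ the chord direction $\tfrac{\gamma(s)-\gamma(s_2)}{|\gamma(s)-\gamma(s_2)|}$ tends to $-\gamma'(s_2)$, so that limiting angle is $\varphi(s_2)+\pi\pmod{2\pi}$, while at $s=s_1$ the chord direction is $\tfrac{\gamma(s_1)-\gamma(s_2)}{|\gamma(s_1)-\gamma(s_2)|}$. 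Symmetrically, $w_\Gamma(\gamma(s_1))=\Theta_1(s_2)-\lim_{s\searrow s_1}\Theta_1(s)$, where $\Theta_1$ is a continuous angle function for $s\mapsto\gamma(s)-\gamma(s_1)$; as $s\searrow s_1$ the chord direction tends to $\gamma'(s_1)$, giving limiting angle $\varphi(s_1)$, and at $s=s_2$ the chord direction is $\tfrac{\gamma(s_2)-\gamma(s_1)}{|\gamma(s_2)-\gamma(s_1)|}$, which is \emph{opposite} to the $s_1$-chord at $\gamma(s_2)$ appearing above. Adding the two winding numbers, the two chord directions between $\gamma(s_1)$ and $\gamma(s_2)$ contribute angles differing by $\pi$ with opposite signs, and combined with the extra $+\pi$ from the $-\gamma'(s_2)$ limit the constant terms telescope, leaving $\varphi(s_2)-\varphi(s_1)$ after one checks the $2\pi$-ambiguities match up. This is essentially the same bookkeeping as in the classical proof that the rotation index of a simple closed curve formed by $\Gamma$ and an added chord equals $\pm1$ (Hopf's Umlaufsatz argument).

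A cleaner way to organize the last step, which I would actually carry out, is to use a homotopy/continuity argument: define, for $(u,v)$ in the closed triangle $\{s_1\le u\le v\le s_2\}$, the direction $e(u,v)$ to be the unit chord $\tfrac{\gamma(v)-\gamma(u)}{|\gamma(v)-\gamma(u)|}$ for $u<v$ and the unit tangent $\gamma'(u)$ for $u=v$ (well-defined and continuous on the triangle since $\Gamma$ is simple and smooth, by the standard secant-to-tangent lemma). Lift $e$ to a continuous angle function $\Psi$ on the triangle. Then $w_\Gamma(\gamma(s_1))=\Psi(s_1,s_2)-\Psi(s_1,s_1)$, $w_\Gamma(\gamma(s_2))=\Psi(s_2,s_2)-\Psi(s_1,s_2)+\pi$ (the $+\pi$ because at $\gamma(s_2)$ one looks back along $-e$), and $\varphi(s_2)-\varphi(s_1)=\Psi(s_2,s_2)-\Psi(s_1,s_1)$; summing gives~\eqref{eq:angle} once the orientation conventions from Remark~\ref{rem:sign-wn} and~\eqref{eq:ext-wn} are pinned down. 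The main obstacle I anticipate is precisely this sign/convention reconciliation together with the continuous choice of the angle function on the triangle, i.e. making the secant-to-tangent extension rigorous at the diagonal $u=v$ using smoothness of $\gamma$ up to the endpoints; the actual ``analytic'' content ($\int\kappa=\varphi(s_2)-\varphi(s_1)$) is immediate from the Frenet formula.
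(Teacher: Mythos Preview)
Your secant-map approach is sound and is a genuinely different (and cleaner) organization than the paper's. The paper argues in two stages: first it shows directly, by tracking the angle of the tangent $\tau(s)=e^{i\alpha(s)}$ and the polar angles $\Theta_1,\Theta_2$ about the two endpoints, that $w_\Gamma(\gamma(s_1))+w_\Gamma(\gamma(s_2))-\int_{s_1}^{s_2}\kappa\in 2\pi\mathbb Z$; then it runs a one-parameter continuity argument in the terminal point $s$ (using Lemma~\ref{lem:ext-wn}) to force this integer multiple of $2\pi$ to vanish. Your Hopf-style argument replaces both stages at once: the continuous lift $\Psi$ of the secant direction $e(u,v)$ on the simply-connected triangle $\{s_1\le u\le v\le s_2\}$ kills all $2\pi$-ambiguities in one stroke. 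Note that the continuous extension of $e$ to the diagonal that you need is exactly the function $g$ constructed in Step~1 of the proof of Lemma~\ref{lem:ext-wn}, so the paper already does the analytic work your approach requires; it just does not exploit the two-parameter lift.

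There is, however, one concrete bookkeeping slip in your triangle computation. You write $w_\Gamma(\gamma(s_2))=\Psi(s_2,s_2)-\Psi(s_1,s_2)+\pi$, but the $+\pi$ should not be there: the direction $\frac{\gamma(s)-\gamma(s_2)}{|\gamma(s)-\gamma(s_2)|}$ equals $-e(s,s_2)$ for \emph{every} $s\in[s_1,s_2)$, not only in the limit $s\nearrow s_2$, so a continuous angle function is $s\mapsto \Psi(s,s_2)+\pi$ globally, and the constant $\pi$ cancels in the difference. With your formulas as written you would obtain $\int\kappa=w_\Gamma(\gamma(s_1))+w_\Gamma(\gamma(s_2))-\pi$, which is false. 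Once you drop that $+\pi$, summing gives $w_\Gamma(\gamma(s_1))+w_\Gamma(\gamma(s_2))=\Psi(s_2,s_2)-\Psi(s_1,s_1)=\varphi(s_2)-\varphi(s_1)=\int_{s_1}^{s_2}\kappa$ on the nose, with no residual convention to ``pin down''.
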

\begin{proof}~\\
{\bf Step 1.}

We will first prove
\begin{equation}\label{eq-key-s1}
	w_{\Gamma}(\gamma(s_1))+w_{\Gamma}(\gamma(s_2))-\int_{s_1}^{s_2}\kp(s)\dd s\in2\pi\Z.
	\end{equation}
	We will identify $\dR^2$ with $\dC$ via the mapping $\dR^2\ni(x_1,x_2)^\top\mapsto x_1+\ii x_2$.
	Writing \(\tau(s)=\gamma'(s)=\tau_1(s)+\ii\tau_2(s)\),  we get from the Frenet formula \(\tau'(s)=\ii \kp(s)\tau(s)\) and consequently
	\[
	\tau(s)=\tau(s_1)\exp\left(\ii\int_{s_1}^s \kp(\varsigma)\dd\varsigma\right).
	\]
	We introduce 
	\[\alpha(s):=\int_{s_1}^s\kp(\varsigma)\dd\varsigma+\alpha(s_1)\]
 where \(\alpha(s_1)\in[0,2\pi)\) is defined by \(\tau(s_1)= \ee^{\ii \alpha(s_1)}\).
	
	Writing \(\gamma(s)-\gamma(s_1)=\rho_1(s)\ee^{\ii\Theta_1(s)}\) where \(\rho_1(s)=|\gamma(s)-\gamma(s_1)|\) and \(\Theta_1(\cdot)\) is a continuous function with  \(\Theta_1(s_1)\in[0,2\pi)\),  it follows that
	\[\gamma(s)-\gamma(s_1)= (s-s_1)\tau(s_1)+\mathcal O(|s-s_1|^2)=(s-s_1)\ee^{\ii\alpha(s_1)}+\mathcal O(|s-s_1|^2)\quad\mbox{as }s\searrow s_1\]
	and \(\Theta_1(s_1)=\alpha(s_1)\).  In a similar fashion,  writing \(\gamma(s)-\gamma(s_2)=\rho_2(s)\ee^{\ii\Theta_2(s)}\) where \(\rho_2(s)=|\gamma(s)-\gamma(s_2)|\) and \(\Theta_2(\cdot)\) is a continuous function with  \(\Theta_2(s_2)\in[0,2\pi)\),  we get that
	\[
	\Theta_2(s_2)=\alpha(s_2)-\pi~{\rm mod}\,\,2\pi.
	\]
	The identities \(\rho_1(s_2)\ee^{\ii \Theta_1(s_2)}=\gamma(s_2)-\gamma(s_1)=-\rho_2(s_1)\ee^{\ii\Theta_2(s_1)}\) yield \(\Theta_2(s_1)-\Theta_1(s_2)+\pi\in2\pi \Z\). 
	Consequently,
	\[w_{\Gamma}(\gamma(s_1))+w_{\Gamma}(\gamma(s_2))=\Theta_1(s_2)-\Theta_1(s_1)+\Theta_2(s_2)-\Theta_2(s_1)=\alpha(s_2)-\alpha(s_1)~{\rm mod}\,\,2\pi,\]
	which yields \eqref{eq-key-s1}.

{\bf Step 2.}

For \(s\in (s_1,s_2]\),  put
\[ \Psi(s)= w_{\Gamma_{s_1,s}}(\gamma(s_1))+w_{\Gamma_{s_1,s}}(\gamma(s))-\int_{s_1}^{s}\kp(\varsigma)\dd \varsigma,\]	
where \(\Gamma_{s_1,s}\) is the arc of \(\Gamma\) parametrized by \(\gamma|_{[s_1,s]}\). 
Notice that \(\Psi(s_1)=0\) and the argument leading to \eqref{eq-key-s1} yields that 
	\[\Psi(s)\in 2\pi\Z.\]
Moreover,  \(\Psi\) is continuous,  by  Lemma~\ref{lem:ext-wn} (use \eqref{eq:cont-wn}).  Eventually,   we get that
\[\Psi(s)=0\quad(s_1<s\leq s_2),\]
and this finishes the proof of the proposition.
\end{proof}
\medskip
\begin{proof}[Proof of Lemma~\ref{lem:ext-wn}]~\medskip

\noindent{\bf Step 1.} \medskip

Let us introduce a phase function that we will use throughout the proof. 
Let \(0<\beta<\alpha\).   Pick  arbitrary \(\varsigma_1,\varsigma_2\in(s_1-\beta,s_2+\beta]\) such that \(\varsigma_1<\varsigma_2\).  By Taylor's formula,  we have
\begin{equation}\label{eq:Ch-T}
\wg(\varsigma_1)=\wg(\varsigma_2)+(\varsigma_1-\varsigma_2)\wg\,'(\varsigma_2)+\mathcal O(|\varsigma_1-\varsigma_2|^2).
\end{equation}
Since \(|\wg\,'(\varsigma_2)|=1\),   we deduce from the previous formula  that
\begin{equation}\label{eq:Ch-TT}
|\wg(\varsigma_1)-\wg(\varsigma_2)|=\varsigma_2-\varsigma_1+\mathcal O(|\varsigma_1-\varsigma_2|^2).
\end{equation}
Inserting \eqref{eq:Ch-TT} into \eqref{eq:Ch-T},  and using that all error terms  are uniform in $|\varsigma_1-\varsigma_2|\leq\beta$ due to the uniform continuity of $\gg$ and its derivatives, we get that
\[\wg(\varsigma_1)=\wg(\varsigma_2)+|\wg(\varsigma_1)-\wg(\varsigma_2)|\bigl(-\wg\,'(\varsigma_2)+\mathcal O(|\varsigma_1-\varsigma_2|)\bigr),\]
which can be rewritten in the form
\[\frac{\wg(\varsigma_1)-\wg(\varsigma_2)}{|\wg(\varsigma_1)-\wg(\varsigma_2)|}=-\wg\,'(\varsigma_2)+\mathcal O(|\varsigma_1-\varsigma_2|)\quad\mbox{for }\varsigma_1<\varsigma_2<s_2+\beta.\]
This proves that the function
\[g(\varsigma_1,\varsigma_2):=\begin{cases}
\displaystyle\frac{\wg(\varsigma_1)-\wg(\varsigma_2)}{|\wg(\varsigma_1)-\wg(\varsigma_2)|}&\mbox{if }\varsigma_1<\varsigma_2\medskip\\
-\wg\,'(\varsigma_2)&\mbox{if }\varsigma_1\geq \varsigma_2
\end{cases}\]
is continuous on \(D:=(s_1-\alpha,s_2+\alpha)^2\)  and is valued in the unit sphere \(\mathbb S^1\).    We can then find a continuous function \(\phi(\varsigma_1,\varsigma_2)\) defined on \(D\) such that \(\ee^{\ii\phi(\varsigma_1,\varsigma_2)}=g(\varsigma_1,\varsigma_2)\) on \(D\),  where we have identified \(\R^2\) and \(\C^2\) in the standard manner (see, e.g. \cite[Lem. 1.1, pp. 5-6]{BM}).

Similarly,  the function  
\[h(\varsigma_1,\varsigma_2):=\begin{cases}
\displaystyle\frac{\wg(\varsigma_1)-\wg(\varsigma_2)}{|\wg(\varsigma_1)-\wg(\varsigma_2)|}&\mbox{if }\varsigma_1>\varsigma_2\medskip\\
\wg\,'(\varsigma_1)&\mbox{if }\varsigma_1\leq \varsigma_2
\end{cases}\]
is continuous on \(D \) and there is a continuous phase function \(\psi(\varsigma_1,\varsigma_2)\) such that \(\ee^{\ii\psi(\varsigma_1,\varsigma_2)}=h(\varsigma_1,\varsigma_2)\).  In general,  we cannot find a global continuous phase  function \(\varphi\)  on \(D\) such that 
\[ \frac{\wg(\varsigma_1)-\wg(\varsigma_2)}{|\wg(\varsigma_1)-\wg(\varsigma_2)|}=\ee^{\ii \varphi(\varsigma_1,\varsigma_2)} \mbox{ for }\varsigma_1\not=\varsigma_2.\]
 \medskip

\noindent{\bf Step 2.} \medskip

We prove that  the second limit in \eqref{eq:ext-wn*} exists; the proof that the first limit exists is similar.  For any \(s\in(s_2,s_2+\alpha)\),  there exists a  continuous real-valued function 
\(\Th_{s}\) defined on \([s_1,s]\) such that
\begin{equation}\label{eq:Ch-Th}
\wg(\varsigma)=\wg(s)+|\wg(\varsigma)-\wg(s)| 
\bigl(\cos\Th_s(\varsigma),\sin\Th_s(\varsigma)\bigr)^\top\quad\mbox{for }s_1\leq \varsigma< s.
\end{equation}
In fact,  from Step~1,  we can take \(\Th_{s}(\varsigma)=\phi(\varsigma,s)\),  \(s_1\leq\varsigma\leq s\).  
Consequently the following limit exists,
\[
\lim_{s\searrow s_2}w_{\,\wG_{s_1,s_2}}\bigl(\widetilde\gamma(s)\bigr)=
\lim_{s\searrow s_2}
\bigl(\phi(s_2,s)-\phi(s_1,s)\bigr)
=\phi(s_2,s_2)-\phi(s_1,s_2).
\]

\noindent{\bf Step 3.} \medskip

We prove \eqref{eq:cont-wn}.  Fix \(s_0\in[s_1,s_2]\).  By Steps 1 and 2,   
\[  w_{\,\wG_{s_1,s_0}}\bigl(\widetilde\gamma(s_0)\bigr):=\lim_{s\searrow s_0}
w_{\,\wG_{s_1,s_0}}\bigl(\widetilde\gamma(s)\bigr)=\phi(s_0,s_0)-\phi(s_1,s_0),\]
and for any \(s\in (s_1,s_2+\alpha)\),  we have
\[  w_{\,\wG_{s_1,s}}\bigl(\widetilde\gamma(s)\bigr):=\lim_{\tilde s\searrow s}
w_{\,\wG_{s_1,s}}\bigl(\widetilde\gamma(\tilde s)\bigr)
=\phi(s,s)-\phi(s_1,s).\]
Then,
\[ \lim_{\substack{\\s\to s_0\\s\in(s_1,s_2+\alpha)}} w_{\,\wG_{s_1,s}}\bigl(\widetilde\gamma(s)\bigr)=\phi(s_0,s_0)-\phi(s_1,s_0)
=w_{\,\wG_{s_1,s_0}}\bigl(\widetilde\gamma(s_0)\bigr).\]
\end{proof}
\subsection{Proof of Proposition \ref{prop:geometric_bound}}
\label{ssec:proof}

\noindent{\bf Step 1: reduction to a simpler geometric setting.}
	Without loss of generality we can assume that $c_1$ coincides with the origin and that $c_2 = (a,0)^\top$ with some $a > 0$. In this notation, we have  $a = |c_1-c_2|$. The problem can be reduced to the situation when $t = 1$ and $p_1 = (0,-1)^\top$, $p_2 = (a,-1)^\top$. In order to reduce to the case $t=1$, we scale the curve $\G$ with the factor $\frac{1}{t}$. Upon such a scaling the length of the curve and the distance between the points $c_1$ and $c_2$
	are both multiplied with the factor $\frac{1}{t}$, while the total curvature remains invariant. 
	From now on we assume that $t =1$ and the inequality we need to prove reads as follows
	\begin{equation}\label{eq:ineq1}
	|\G| \ge |c_1-c_2|+\int_{s_1}^{s_2}\kp(s)\dd s.
	\end{equation}
	Furthermore, in the case $p_1\ne (0,-1)^\top$ and $p_2\ne(a,-1)^\top$ we can add to the curve $\G$ arcs of the circles $\p\cB_1(c_1)$ and $\p\cB_1(c_2)$ which connect $p_1$ with $(0,-1)^\top$ and $p_2$ with $(a,-1)^\top$, respectively\footnote{Strictly speaking, since we want $\G$ to be simple, one would have to put the additional arcs slightly inside the disks $\cB_1(c_1)$. $\cB_1(c_2)$, and use a limiting argument.}, and set these lowest points of the circles $\cB_1(c_j)$, $j=1,2$, as new $p_1$ and $p_2$; see Figure~\ref{fig-curve}.
Under such a geometric transform the length of the curve $\G$ will clearly increase by the same quantity as the total curvature of $\G$ meaning that we only need to prove~\eqref{eq:ineq1} for such a location of the points $p_1,p_2$.	 
\begin{figure}[h]
\centering
\includegraphics[width=12cm]{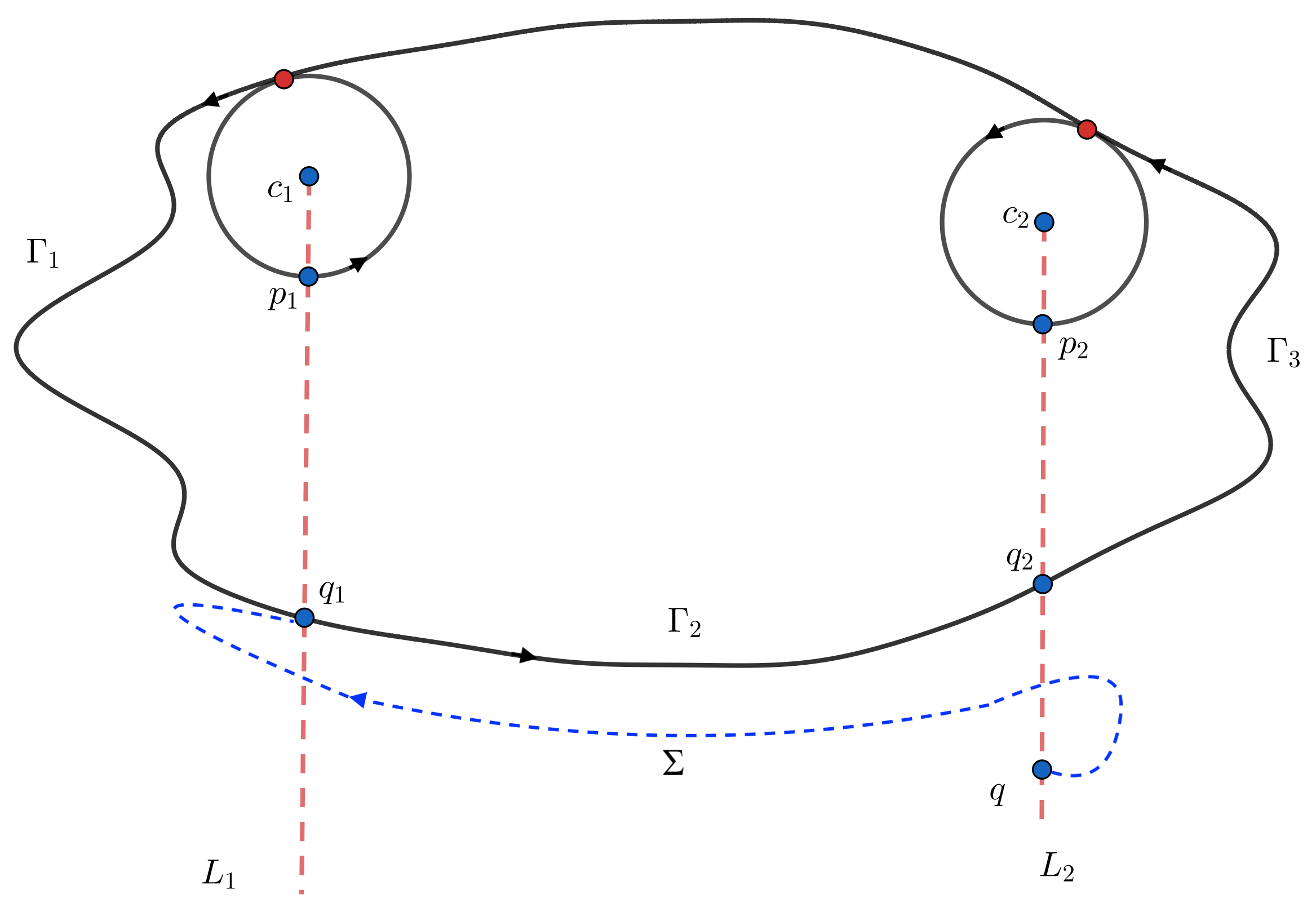}
\caption{Illustration of the reduction to a setting where $p_1+(0,-1)^\top$ and $p_2+(a,-1)^\top$, possibly after adding arcs of circles. The splitting of $\Gamma$ into $\Gamma_1,\Gamma_2$ and $\Gamma_3$ is shown, along with the auxiliary curve $\Sigma$ (dashed).}
\label{fig-curve}
\end{figure}

	\medskip
	
	\noindent {\bf Step 2: splitting the curve $\G$ into three parts.}
	We introduce the ray $L_1 := \{(0,y)\colon y\in\dR_-\}$ starting at $c_1$ and passing through $p_1$ and the ray $L_2 := \{(a,y)\colon y\in\dR_-\}$
	starting at $c_2$ and passing through $p_2$. We introduce
	$r_1\in [s_1,s_2]$ such that $\gg(r_1)\in L_1$ and that for any $r\in[s_1,s_2]$ with $\gg(r)\in L_1$ it holds that $\gg_2(r_1)\le \gg_2(r)$. Note that such a value $r_1$ exists, since the curve $\G$ intersects the ray $L_1$ at least in one point $p_1$. Furthermore, we define
	\[
	r_2 :=\inf\big\{r\in[r_1,s_2]\colon\gg(r)\in L_2\big\}.
	\]
	We introduce the points 
	\[
	q_j := \gg(r_j)\in\dR^2,\qquad j=1,2.
	\]	
	Now, we can split the curve $\G$ into three parts
	\[
	\G_{1} := \gg([s_1,r_1]),\qquad \G_{2} := \gg([r_1,r_2]),\qquad \G_{3} := \gg([r_2,s_2]), 
	\]
	see Figure~\ref{fig-curve}.
	\medskip
	
	\noindent {\bf Step 3: auxiliary curves.}
 
	We define an auxiliary smooth simple curve $\Sg\subset\dR^2$ such that
	\begin{myenum}
		\item The starting point $q\in\dR^2$ of $\Sg$ is on the ray $L_2$ and the
		terminal point of $\Sg$ is $q_1$.
		\item The \(y\)-coordinate of \(q\) is lower than the \(y\)-coordinates of both \(q_1\) and \(q_2\). 
		\item $\Sg\cup\G_2\cup\G_3$ is a smooth simple curve.
		\item The total curvature of $\Sg$ is zero.
		\item $w_{\Sg\cup\G_2}(c_2)\in \{-2\pi,0\}$. 
	\end{myenum}
	A curve $\Sg$ satisfying all the above assumptions can be constructed  from the terminal point to the starting point as follows (see Figure~\ref{fig-curve} for illustration). We proceed backwards from the point $q_1$ as a smooth continuation of the curve $\G_2$, then we return back to the ray $L_1$ below the point $q_1$, then we proceed along the ray $L_1$ until no parts of $\G$ are below in the sense of $y$-coordinate,  then we move towards the starting point $q\in L_2$ . In order to achieve zero total curvature we construct the curve $\Sg$ so that the angle between the tangent vectors to $\Sg$ at the point $q_1$ and the ray $L_1$  coincides with the angle between the tangent vector to $\Sg$ at the point $q$ and the ray $L_2$. 
	
	It is clear from the construction that $w_{\Sg\cup\G_2}(c_2) \in 2\pi\dZ$, because the starting point and the endpoint of the curve $\Sg\cup\G_2$ both lie on the ray $L_2$ emanating from the point $c_2$.  Since the curve $\Sg\cup\G_2$ (apart from a small piece at the start) hits the ray $L_2$ only at the starting point and the terminal point,  we can extend \(\Sigma\cup\Gamma_2\) to a simple closed curve by adding a piece \(\widetilde\G_2\) to the right of \(L_2\) (except possibly for a small piece near \(q\) or \(q_2\)).  Since \(\Sigma\cup \G_2\cup\widetilde\G_2\) is a simple closed curve, its winding number with respect to \(c_2\) is \(-2\pi\), \(0\) or \(2\pi\).  Since the \(y\)-coordinate of \(q\) is lower than the one of \(q_2\),  we can exclude the case \(2\pi\).  Finally,  since \(w_{\wG_2}(c_2)=0\),  we obtain by \eqref{eq:additivity} that the only possible cases are $w_{\Sg\cup\G_2}(c_2) = 0$ and $w_{\Sg\cup\G_2}(c_2) = -2\pi$.  \medskip
	
Next, we introduce two more auxiliary curves. Namely, the curve $\Sg_1\subset\dR^2$, which lies inside the disk $\cB_1(c_1)$  connecting the point $c_1$ with $p_1$ and satisfying the following properties
	\begin{myenum}
		\item[(a$_1$)] \(\Sigma_1\) is a simple curve and the total curvature of $\Sg_1$ is zero.
		\item[(b$_1$)] $w_{\Sg_1}(c_1) = -\frac{\pi}{2}$.
	\end{myenum}
Note that \(w_{\Sigma_1}(q_1)=0\) since \(c_1\) and \(p_1\) are both on \(L_1\),  but both of their \(y\)-coordinates are larger than the one of \(q_1\), which is also on \(L_1\).

	Analogously,  we construct
	the curve $\Sg_2\subset\dR^2$, which lies inside the disk $\cB_1(c_2)$  connecting the point $c_2$ with $p_2$ and satisfying the following properties
	\begin{myenum}
		\item[(a$_2$)] \(\Sigma_2\) is a simple  curve and the total curvature of $\Sg_2$ is zero.
		\item[(b$_2$)] $w_{\Sg_2}(c_2) = -\frac{\pi}{2}$.
	\end{myenum}
	The curves $\Sg$, $\Sg_1$ and $\Sg_2$ are shown in Figure~\ref{fig-curve*}.
 \begin{figure}[htp]
\centering
\includegraphics[width=12cm]{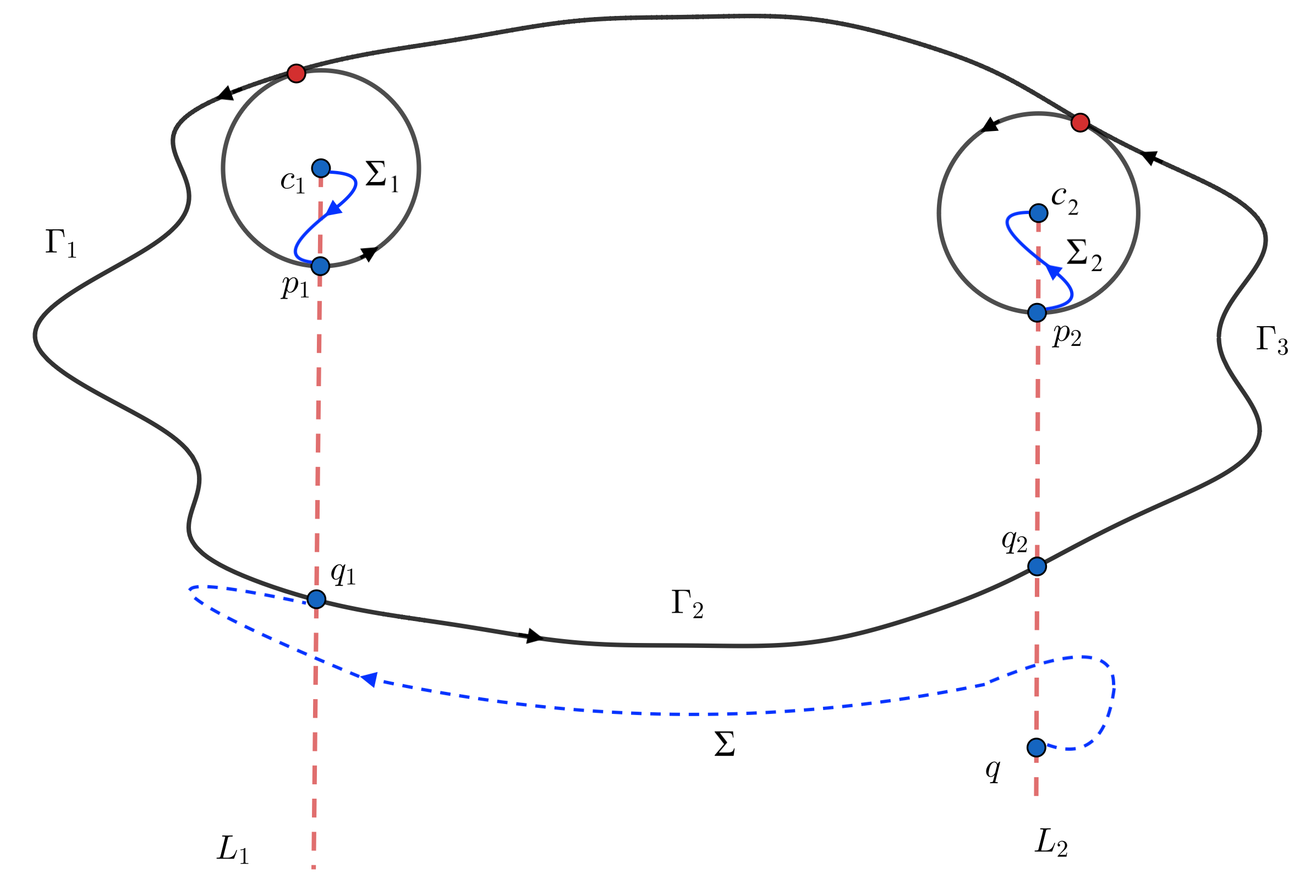}
\caption{Illustration of the curves $\Sigma$, $\Sigma_1$ and $\Sigma_2$. The curve $\Sigma$ (dashed) connects the points $q$ and $q_1$. The curve $\Sigma_1$ connects the point $p_2$ and the center $c_1$ of the disk. Similarly, the curve $\Sigma_2$ connects the center $c_2$ of the disk and the point $p_2$.}
\label{fig-curve*}
\end{figure}
	
	\medskip
	
	\noindent {\bf Step 4: estimates of the lengths.}
	Firstly,
	we notice that
	\begin{equation}\label{eq:key}
	|\G_2| \ge |c_1-c_2|, 
	\end{equation}	
	using that $\dist(L_1,L_2) = |c_1-c_2|$ and that $\G_2$ connects a point on $L_1$ with a point on $L_2$.
	
	Secondly, we infer that
	\begin{equation}\label{eq:g13wind}
	|\G_1| \ge w_{\G_1}(c_1),\qquad |\G_3| \ge w_{\G_3}(c_2).
	\end{equation}
	Let us only show the first of the above two inequalities in \eqref{eq:g13wind}. The second of them can be shown analogously. Identifying $\dR^2$ with the complex plane $\dC$ and fixing $c_1$ to be the origin, we can parametrize $\G_1$ in the complex plane by the mapping 
	\[
	[s_1,r_1]\ni s\mapsto r(s) e^{\ii\Th(s)},
	\]
	where $\Th$ is the continuous angle function as in Definition~\ref{dfn:winding}
	associated with the curve $\G_1$ and
	$r(s) :=|\gg(s)|$. Hence, we derive that
	\[
	\begin{aligned}
	|\G_1| &= \int_{s_1}^{r_1}|r'(s) + \ii \Th'(s)r(s)|\dd s \ge  
	\int_{s_1}^{r_1}|\Th'(s)|r(s)\dd s\\
	&\ge
	\int_{s_1}^{r_1}|\Th'(s)|\dd s
	\ge
	\left|\int_{s_1}^{r_1}\Th'(s)\dd s\right| = |\Th(r_1) - \Th(s_1)| = |w_{\G_1}(c_1)| \ge w_{\G_1}(c_1),
	\end{aligned}
	\]
	where we used in between that $r(s) \ge 1$ for all $s\in[s_1,r_1]$,  since the curve \(\G_1\) lies outside the circle \(\cB_1(0)\).\medskip
	
	\noindent {\bf Step 5: final estimates.}
	In this step we combine all the obtained estimates and use extensively the properties of the winding numbers stated in Subsection~\ref{sec:winding}. With a slight abuse of notation we denote by $\kp$ the curvature of any of the curves introduced above as no confusion can arise.
	First, we obtain 
	\begin{equation}\label{eq:estimate1}
	\begin{aligned}
	\int_{\G_1}\kp  = \int_{\Sg_1\cup\G_1}\kp &= w_{\Sg_1\cup\G_1}(c_1) + w_{\Sg_1\cup\G_1}(q_1)\\
	&=w_{\Sg_1}(c_1) + w_{\G_1}(c_1) + w_{\Sg_1}(q_1) + w_{\G_1}(q_1)\\
	&\le -\frac{\pi}{2} + |\G_1| + w_{\G_1}(q_1),
	\end{aligned}
	\end{equation}
	where we used that the total curvature of $\Sg_1$ is zero in the first step (see (a$_1$)), Proposition~\ref{prop:curvature_winding}
	in the second step, additivity of the winding numbers in the third step (see \eqref{eq:additivity}),  and finally employed the properties  $w_{\Sg_1}(c_1) =-\frac{\pi}{2}$ (see (a$_2$)), $w_{\Sg_1}(q_1) = 0$, and $w_{\G_1}(c_1) \le |\G_1|$ (see \eqref{eq:g13wind}) in the last step.
	Next, we obtain that
	\begin{equation}\label{eq:estimate2}
	\begin{aligned}
	\int_{\G_2\cup\G_3}\kp = \int_{\Sg\cup\G_2\cup\G_3\cup\Sg_2}\kp &= w_{\Sg\cup\G_2\cup\G_3\cup\Sg_2}(q)+
	w_{\Sg\cup\G_2\cup\G_3\cup\Sg_2}(c_2)\\
	&= w_{\Sg\cup\G_2\cup\G_3\cup\Sg_2}(q) + 
	w_{\Sg\cup\G_2}(c_2) + w_{\G_3}(c_2) + w_{\Sg_2}(c_2)\\
	&\le
	-\frac{\pi}{2}+|\G_3| +
	w_{\Sg\cup\G_2\cup\G_3\cup\Sg_2}(q),
	\end{aligned}
	\end{equation}
	where we used that the total curvatures of $\Sg$ and $\Sg_2$ are both equal to zero in the first step, applied Proposition~\ref{prop:curvature_winding} in the second step, used additivity of the winding numbers in the third step (see \eqref{eq:additivity}),  and combined the properties $w_{\Sg_2}(c_2) = -\frac{\pi}{2}$ (see (b$_2$)), $w_{\Sg\cup\G_2}(c_2) \in\{-2\pi,0\}\le0$ (see (v)), and $w_{\G_3}(c_2)\le |\G_3|$ (see \eqref{eq:g13wind}) in the last step.
	It remains to notice that by construction, one has\footnote{In fact, this is a consequence of the invariance of the winding number  by rotation and translation (see Figure~\ref{fig-id}).}
 \[
	w_{\Sg\cup\G_2\cup\G_3\cup\Sg_2}(q)
	+w_{\G_1}(q_1)\in\{-\pi,\pi\}.
	\]  
\begin{figure}[h]
\centering
\includegraphics[width=12cm]{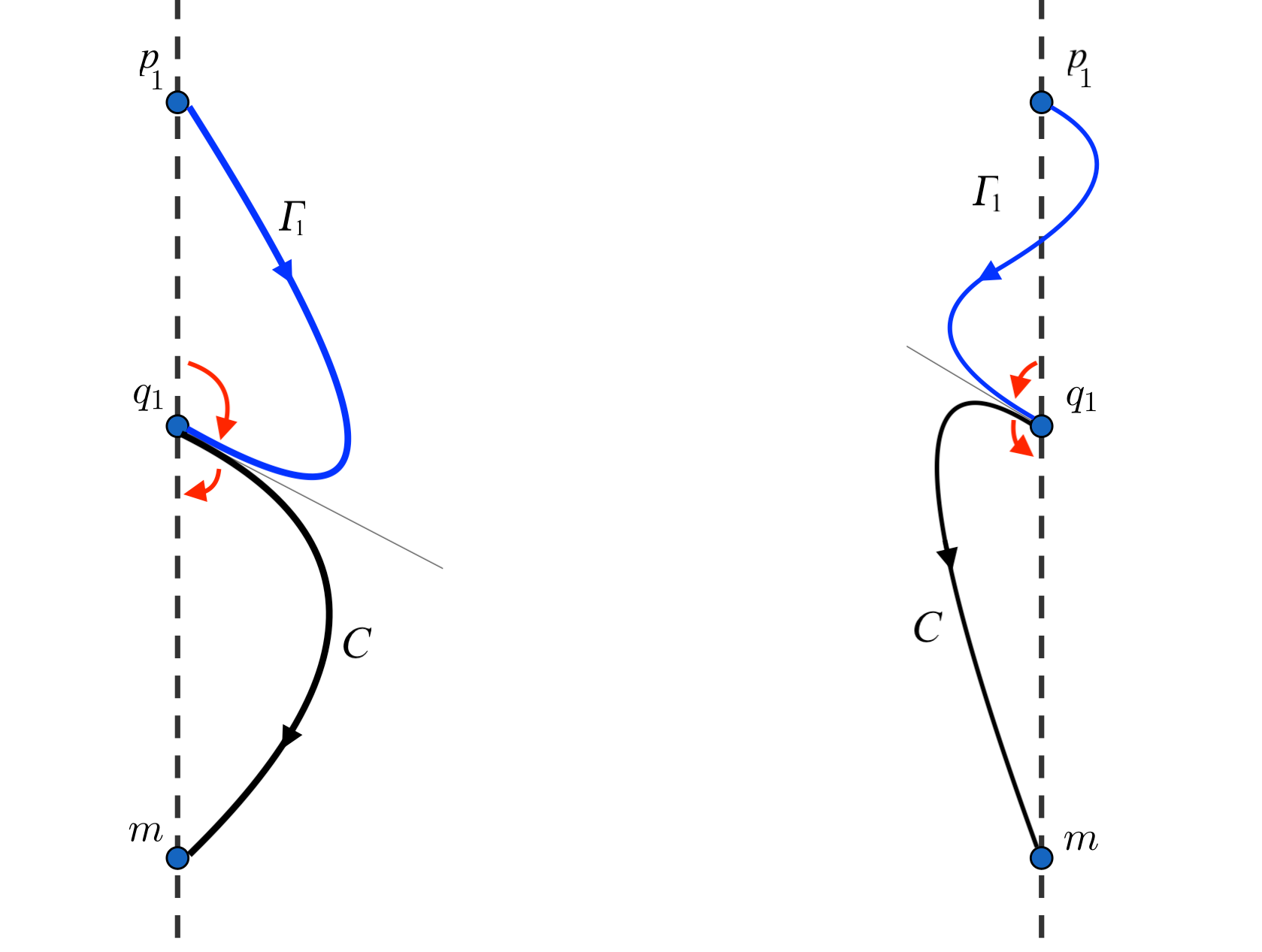}
\caption{A schematic illustration of the calculation of $w_{\Sg\cup\G_2\cup\G_3\cup\Sg_2}(q)
	+w_{\G_1}(q_1)$. The curve $\Sigma\cup\Gamma_2\cup\Gamma_3\cup\Sigma_2$ with start point $q$ and terminal point $c_2$ is rotated about $q$, then translated along $\overrightarrow{qq_1}$. The resulting curve, denoted  by $C$, starts at $q_1$  and terminates at $m$,  where $m$ is the point obtained from  $c_2$ after rotation and translation. Moreover $C$ is tangent to $\G_1$ at $q_1$. Finally one observes that $w_{\G_1}(q_1)+w_{C}(q_1)=\pm \pi$.}
 \label{fig-id}
\end{figure}

\noindent Combining estimates~\eqref{eq:estimate1} and~\eqref{eq:estimate2} we end up with
	\[
	\int_\G\kp  = \int_{\G_1}\kp + \int_{\G_2\cup\G_3}\kp\le -\pi + |\G_1|+|\G_3| + 	w_{\Sg\cup\G_2\cup\G_3\cup\Sg_2}(q)
	+w_{\G_1}(q_1) \le |\G_1|+|\G_3|.
	\]
	Finally, using that $|c_1-c_2|\le |\G_2|$ we get
	\[
	|c_1-c_2|+\int_\G\kp \le |\G_1|+|\G_2|+|\G_3| = |\G|,
	\]
	which is the desired inequality.

\end{document}